\documentclass[11pt]{amsart}
\usepackage{amsmath,amsfonts,amssymb,amscd,verbatim,comment, amsthm}
\usepackage{tabularx}
\usepackage{fullpage}
\usepackage{color}
\usepackage{array}
\usepackage{lipsum}
\usepackage{enumerate}
\usepackage{mathtools}
\usepackage{longtable}
\usepackage{adjustbox}
\usepackage{tikz}
\usepackage{tikz-cd}
\usepackage{bbm}
\usetikzlibrary{positioning}

\allowdisplaybreaks
\theoremstyle{plain}
\newtheorem{thm}{Theorem} 
\newtheorem{theorem}{Theorem} 

\newtheorem{corollary}[thm]{Corollary} 
\newtheorem{lemma}[thm]{Lemma}
\newtheorem{proposition}[thm]{Proposition}

\theoremstyle{definition}
\newtheorem{definition}{Definition} 
 
\newtheorem{example}[thm]{Example}
 
\newtheorem{question}{Question}

\usepackage{xcolor}
\definecolor{opheliacolor}{RGB}{82,151,248}
\definecolor{danielcolor}{RGB}{82,248,151}

\DeclareMathOperator{\Gal}{Gal}

\title{Hilbertian Fields and Profinite Iterated Monodromy Groups Over Transcendental Elements with Specializations of Infinite Index}
\author{Daniel Gotshall}
\begin{document}
\begin{abstract}
Given a field $L$ and polynomial $f(x)/L$, one can consider the indices of the profinite iterated monodromy groups formed over the ground field within the profinite iterated monodromy groups over elements transcendental over $L$. For any countable Hilbertian field $k$, we show random Galois fixed field extensions almost surely make the indices infinite over all ground field elements, with certain conditions on $f(x)$. This is a generalization of a result by Dittmann and Kadets.
\end{abstract}
\maketitle
\section{Introduction}
\subsection{Background and Main Problem}
Given a polynomial $f(x)$ of degree $d>1$ over a field $L$, we let $f^n(x)$ denote the $n$-th composition of $f$ with itself. Similarly, the set of roots of $f^n(x)-c$ is written as $f^{-n}(c)$, where either $c=a\in L$ or $c=t$ is transcendental over $L$. Naturally, $f^{-\infty}(c):=\bigcup_{n=1}^\infty f^{-n}(c)$.
Clearly $f(x)-t$ is irreducible over $L(t)$. When $\text{char}(L)\nmid d$, it is also separable. Each element of the iterated Galois group over $t$ can be identified with an automorphism on a $d$-ary tree. This is done by letting the root vertex denote $t$ and the vertices on the $n$-th level be the elements of $f^{-n}(t)$. One can pass to the inverse limit of these groups with respect to $n$. Such objects are referred to as ``profinite iterated monodromy groups'' (abbreviated PFIMG). If $f(x)-a$ is no longer separable, one can simply pass to its largest separable factor $g_{L,f,a}(x)$ and replace $t$ with $a$ above. To make this precise, we use the following notation.
\begin{definition}
$\boldsymbol{\textbf{Aut}(T_\infty^d)}$ is the group of automorphisms on the infinite $d$-ary tree.
\end{definition}
\begin{definition}\label{Galois group notation}
Let $L$ be any infinite field and $f\in L[x]$ such that $\text{char}(L) \nmid \text{deg}(f)$. For each $n\geq 1$, let $g_{f,n}(x)$ be the largest separable factor of $f^n(x)-a$ over $L$.  Then we implement the following notation.
\begin{align*}
\boldsymbol{G_{f,n}}&:=\text{Gal}(L(f^{-n}(t))/L(t))    \\
\boldsymbol{G_{f}}&:=\text{Gal}(L(f^{-\infty}(t))/L(t))=\varprojlim G_{f,n}\\
\boldsymbol{G_{f,n,a}}&:=\text{Gal}(L(g_{f,n}^{-1}(a))/L)\\
\boldsymbol{G_{f, a}}&:=\varprojlim G_{f,n,a}
\end{align*}
\end{definition}
Let $P_0$ be the prime ideal $(t-a)$ in $L[t]$ and $P_n$  a prime ideal lying above $P_0$ in $L[f^{-n}(t)]$.
\begin{definition}We use the following terminology (shown to be well defined in \cite{Lang}).
\begin{itemize}
    \item $\boldsymbol{L_n}:=L[f^{-n}(t)]/P_n$ (so $L_0=L[t]/(t-a)\cong L$).
    \item $\boldsymbol{L_n^s/L_0}$ is the maximal separable subextension of $L_n/L_0$.
    \item $\boldsymbol{L_{n,a}}:=L(g_{f,n}^{-1}(a))$.
    \item $\boldsymbol{D_n}\leq G_{f,n}$ is the decomposition subgroup over $P_n$.
    \item $\boldsymbol{I_n}$ is the inertia subgroup over $P_n$ (it acts trivially on $L_n^s$ or equivalently all of $L_n$).
\item $\boldsymbol{J_n}\triangleleft D_n$ acts trivially on $L_{n,a}$ ($I_n\triangleleft J_n$).
\end{itemize}
\end{definition}
We have the following (see \cite{Lang}).
\begin{equation}\label{Decomp Subgroup n levels}
D_n/I_n\cong \text{Aut}(L_n/L_0)\cong \text{Gal}(L_n^s/L_0).   
\end{equation}
Often we get $L_n^s\cong L_{n,a}$. In these cases, we obtain $D_n/I_n\cong G_{f,n,a}$ by Equation \eqref{Decomp Subgroup n levels}. However, we can have $L_{n,a}\subsetneq L_n^s$ when $\text{char}(L)=p>0$, even for polynomials $f(x)/L$ such that $f^n(x)-t$ is separable and irreducible over $L(t)$. For an example when $n=1$, $p$ is odd, and $a=0$, consider $f(x)=x^{2p+1}-bx$ where $b^{1/2}, b^{1/p}\not\in L$ (such elements will always exist for the fields we study here). One can easily verify that $L_{1,0}=L$ and $[L_1^s:L]\geq 2$, meaning $I_1\lneq J_1$. Hence $G_{f,n,a}$ is a quotient of $D_n/I_n$ in general.
\begin{equation}\label{eq specialization n levels}
D_n/J_n\cong G_{f,n,a}.
\end{equation}

Definition \ref{Galois group notation} and Equation \eqref{eq specialization n levels} imply $G_{f,a}\cong \varprojlim D_n/J_n$. In the special case that $J_n$ is trivial (i.e. $f^n(x)-a$ is separable) $\forall \ n$, we can identify $G_{f, a}$ with the decomposition subgroup of $G_{f}$. In the more general setting, we abuse notation in a natural way as explained below.
\begin{definition}\label{Group Index Def}
\[[G_{f}:G_{f, a}]:=\lim_{n\rightarrow \infty}\frac{|G_{f,n}|}{|G_{f,n,a}|}= \lim_{n\rightarrow \infty} ([G_{f,n}:D_n]\cdot |J_n|).\]
\end{definition}
Notice the limit in Definition \ref{Group Index Def} is well defined because $|G_{f,n}|/|G_{f,n,a}|$ can only increase with $n$ (as $G_{f, n+1, a}$ projects onto $G_{f,n,a}$ for all $n$). Also, in the separable case previously described, Definition \ref{Group Index Def} gives the actual index of $G_{f, a}$ in $G_{f}$, as implied by a density argument.
\begin{theorem}[Odoni, Lemma 2.4 and Corollary 7.3 in \cite{Odoni}]\label{Subgps of Sd Wreath Prod}
Let $f(x)/L$ be a polynomial of degree $d>1$ such that $\text{char}(L)\nmid d$. Then $G_{f}\leq [S_d]_{i=1}^\infty\cong \text{Aut}(T_d^\infty)$. 
\end{theorem}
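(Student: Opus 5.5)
The plan is to realize $G_f$ inside the iterated wreath product by letting it act on the tree of preimages, and to check that this action preserves the tree structure. First we need separability. Since $\text{char}(L)\nmid d$, the derivative of $f(x)-c$ has leading term $d\,a_d x^{d-1}\neq 0$. Hence $f(x)-t$ is separable over $L(t)$. Indeed, $f'(x)$ does not involve $t$, so a common root with $f(x)-t$ would force $t=f(\beta)$ with $\beta$ algebraic over $L$, which is impossible. The same argument applies to $f(x)-\alpha$ for any $\alpha\in f^{-m}(t)$, since $\alpha$ is transcendental over $L$. By induction on $n$, every $f^n(x)-t$ therefore has exactly $d^n$ distinct roots, and $L(f^{-n}(t))/L(t)$ is Galois as a splitting field of a separable polynomial.

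Next we build the tree $T_n$. Its vertices at level $m\le n$ are the elements of $f^{-m}(t)$, and $\beta$ at level $m+1$ is joined to $f(\beta)$ at level $m$. By the separability just established, each vertex has exactly $d$ children, so $T_n$ is the $d$-ary tree of depth $n$. Any $\sigma\in G_{f,n}$ fixes $t$ and commutes with $f$ because $f$ has coefficients in $L$. So $\sigma$ permutes each level, and $\sigma(f(\beta))=f(\sigma(\beta))$ shows that it preserves edges. This gives a homomorphism $G_{f,n}\to\text{Aut}(T_n^d)$. It is injective because $L(f^{-n}(t))$ is generated over $L(t)$ by the vertices. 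We then use the standard identification $\text{Aut}(T_n^d)\cong[S_d]_{i=1}^n$, the $n$-fold iterated wreath product, obtained by recording for each vertex the permutation of its children after choosing labelings. This is the content of Odoni's Lemma 2.4.

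Finally we pass to the limit. Restricting from level $n+1$ to level $n$ makes the embeddings compatible with the natural surjections $G_{f,n+1}\to G_{f,n}$ and $\text{Aut}(T^d_{n+1})\to\text{Aut}(T^d_n)$. So they induce a continuous injection $G_f=\varprojlim G_{f,n}\hookrightarrow\varprojlim[S_d]_{i=1}^n=[S_d]_{i=1}^\infty\cong\text{Aut}(T_\infty^d)$.

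The main obstacle is the separability step in positive characteristic. We must make sure that no $f(x)-\alpha$ at a deeper level acquires repeated roots. The transcendence of every $\alpha\in f^{-m}(t)$ over $L$ handles this: a repeated root $\beta$ satisfies $f'(\beta)=0$, so $\beta$ is algebraic over $L$, and then $\alpha=f(\beta)$ would be algebraic too. Once separability holds, the rest is routine bookkeeping with labelings of the tree.
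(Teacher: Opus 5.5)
Your proposal is correct. It is the standard argument: you get separability of every $f(x)-\alpha$ with $\alpha\in f^{-m}(t)$ from the transcendence of $\alpha$ and $f'\neq 0$, let $G_{f,n}$ act faithfully and edge-preservingly on the $d$-ary preimage tree, and pass to the limit to get $G_f=\varprojlim G_{f,n}\hookrightarrow \text{Aut}(T_\infty^d)$. The paper gives no proof of its own and only cites Odoni, so there is no separate argument to compare against; your proof matches the standard route behind that citation.
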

Definition \ref{Group Index Def} combined with Theorem \ref{Subgps of Sd Wreath Prod} imply that $[\text{Aut}(T_d^\infty):G_{f,a}]$ is well defined in an analogous manner.
\begin{question}\label{initial question}
For which (infinite) fields $L$, elements $a\in L$, and polynomials  $f\in L[x]$ do we have  \[{[G_{f}: G_{f, a}]}{<\infty}?\]
\end{question}
Notice we trivially get $[G_f: G_{f,a}]=\infty$ for any $a\in L$ whenever $L$ is algebraically closed. If $L$ is pseudo-algebraically closed, we similarly get a negative answer to Question \ref{initial question} for infinitely many $a\in k$. As all iterates of $f$ over $t$ are irreducible, $[G_f: G_{f,a}]$ is ``artificially inflated'' when the $n$-th iterates over $a$ are inseparable. Thus we modify Question \ref{initial question} to focus on Hilbertian fields $k$ (those satisfying Hilbert's Irreducibility Theorem) and points $a\in k$ outside a thin set.
\begin{question}\label{main problem}
For which Hilbertian fields $k$ and polynomials  $f(x)/k$ do we have  $[G_{f}: G_{f,a}]<\infty$ for all $a\in k\setminus \Omega$ for some thin set $\Omega$?
\end{question}
Odoni was the first to investigate Question \ref{main problem} upon restriction to $n$ iterated preimages.
\begin{theorem}[Odoni, \cite{Odoni}]\label{thm odoni finite iterate Gal gps}
Let $n\in\mathbb{N}$, $k$ be a Hilbertian field, and $f(x)/k$ be an irreducible polynomial. Then $\exists$ a thin subset $\Omega_n\subsetneq k$  such that $G_{f,n}\cong G_{f,n,a}$ $\forall \ a\in k\setminus \Omega_n$.
\end{theorem}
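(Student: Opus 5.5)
This is a Hilbert irreducibility argument applied to a single finite level. Fix $n$. The plan is to exhibit a thin set $\Omega_n$ outside of which the specialization $t\mapsto a$ preserves the Galois group of $f^n(x)-t$.

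Let $F_n=k(f^{-n}(t))$ be the splitting field of $f^n(x)-t$ over $k(t)$; it is finite Galois with group $G_{f,n}$. Here $f^n(x)-t$ is irreducible over $k(t)$ since it is linear in $t$. It is separable because $\text{char}(k)\nmid d^n$: its $x$-derivative $(f^n)'(x)$ has no zero in common with $f^n(x)-t$ in $\overline{k(t)}$, as such a zero would be algebraic over $k$.

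By the primitive element theorem write $F_n=k(t)(\theta)$, and let $h(t,x)\in k[t][x]$ be the minimal polynomial of $\theta$ over $k(t)$, with denominators cleared. Since $h$ is irreducible in $k(t)[x]$ and monic in $x$ after a change of variables, Hilbert's Irreducibility Theorem for the Hilbertian field $k$ gives a Hilbert subset $H\subseteq k$ with $h(a,x)$ irreducible over $k$ for all $a\in H$. The complement of a Hilbert set is thin. We also add to $\Omega_n$ the finitely many $a$ that are
\begin{itemize}
\item zeros of the leading coefficient or discriminant of $h$ in $t$;
\item zeros of $\text{disc}_x(f^n(x)-t)$;
\item points where some coefficient expressing the roots of $f^n(x)-t$ as polynomials in $\theta$ over $k(t)$ has a pole.
\end{itemize}
Finite sets are thin, and a finite union of thin sets is thin, so $\Omega_n$ is thin. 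It is a proper subset of $k$ because a Hilbertian field is not thin in itself.

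Now take $a\notin\Omega_n$ and a prime $P_n$ above $(t-a)$. Since $f^n(x)-a$ is separable, $J_n$ is trivial, so equation \eqref{eq specialization n levels} gives $D_n\cong G_{f,n,a}$. Because $a$ avoids the discriminant locus, the prime is unramified, and the residue field $L_n$ is generated over $k$ by the image $\bar\theta$, a root of $h(a,x)$. Irreducibility of $h(a,x)$ gives
\[
[L_n:k]=\deg h=|G_{f,n}|.
\]
On the other hand $[L_n:k]=|D_n/I_n|=|D_n|\le|G_{f,n}|$. Hence $D_n=G_{f,n}$, which yields $G_{f,n}\cong G_{f,n,a}$, and the isomorphism respects the action on the roots (the tree level $n$).

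The main obstacle is the bookkeeping needed to make the specialization well defined. One must check that the roots of $f^n(x)-a$ generate the residue field, i.e.\ that $L_{n,a}=L_n^s=L_n$. This is exactly why the finitely many bad values, such as poles of the coefficients expressing the roots in terms of $\theta$, are added to $\Omega_n$; once they are excluded, the degree comparison above closes the argument. The hypothesis that $f$ is irreducible is not actually needed for this level-$n$ statement.
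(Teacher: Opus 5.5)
Your argument is correct, given the paper's standing hypothesis $\text{char}(k)\nmid \deg f$, which you rightly make explicit; the notation $G_{f,n}$, $G_{f,n,a}$ is only set up under that hypothesis. You are also right that the irreducibility of $f$ plays no role at a fixed level $n$.

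The paper gives no proof of this statement; it simply quotes Odoni. What you wrote is the standard Hilbert irreducibility specialization argument that the citation rests on. It correctly uses three facts: the complement of a separable Hilbert set is thin, a Hilbertian field is not thin in itself, and $J_n=1$ once $f^n(x)-a$ is separable.

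One simplification is worth noting. Take $\theta$ to be a $k$-linear combination of the roots of $f^n(x)-t$, which is possible since $k$ is infinite. Then:
\begin{itemize}
\item $\theta$ is integral over $k[t]$, so $h$ is monic in $k[t][x]$;
\item your third family of excluded points becomes unnecessary;
\item the reduction $\bar\theta$ lies in $k(f^{-n}(a))$.
\end{itemize}
Irreducibility of $h(a,x)$ then gives $|G_{f,n,a}|\geq \deg h=|G_{f,n}|$ directly. Combined with $G_{f,n,a}\cong D_n\leq G_{f,n}$, this finishes the proof without having to identify the residue field $L_n$.
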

Note Theorem \ref{thm odoni finite iterate Gal gps} does not really address Question \ref{main problem} directly because infinite unions of thin sets are not necessarily thin. Serre showed that finite extensions of Hilbertian fields are also Hilbertian (see \cite{Serre}). Combining this with Hilbert's original Irreducibility Theorem stating that $\mathbb{Q}$ is Hilbertian, it is clear that all number fields are Hilbertian. Question \ref{main problem} has been answered for certain polynomials and rational functions over number fields of various degrees (see \cite{Tucker}, \cite{Benedetto_2018}, \cite{Jones}, \cite{Looper_2018}, \cite{Specter}, \cite{Stoll}). We state Specter's result due to its generality.
\begin{theorem}[Specter, \cite{Specter}] \label{Specter} Let $d>1$ and $L/\mathbb{Q}$ be an algebraic extension unramified outside finitely many primes. Then $\exists$ an infinite set $\{f_i\}_{i=1}^\infty$ of $f_i\in L[x]$ with degree $d_i$ such that $G_{f_i, 0}\cong G_{f_i}\cong \text{Aut}(T_\infty^{d_i})$.
\end{theorem}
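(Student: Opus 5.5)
Since the specialization at $a=0$ realizes $G_{f,n,0}$ as a subquotient $D_n/J_n$ of $G_{f,n}$, and Theorem \ref{Subgps of Sd Wreath Prod} gives $G_f\leq \text{Aut}(T_\infty^d)$, we have $|G_{f,n,0}|\leq |G_{f,n}|\leq |[S_d]_{i=1}^n|$ for every $n$. It therefore suffices to produce infinitely many $f$ with $G_{f,n,0}$ equal to the full $n$-fold iterated wreath product $[S_d]_{i=1}^n$ for every $n$. Both isomorphisms then follow by passing to the inverse limit. The plan is an induction on $n$ over the tower $K_n:=L(f^{-n}(0))$. The inductive step is to show that $\Gal(K_n/K_{n-1})$ is the full product $S_d^{d^{n-1}}$, with one copy of $S_d$ for each root $\beta\in f^{-(n-1)}(0)$ acting on the roots of $f(x)-\beta$.

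I would take $f$ of degree $d$ with coefficients chosen $\mathfrak p$-adically at a fixed auxiliary prime $\mathfrak p$ of $L$ outside the finite ramification set $S$ (enlarging $S$ if needed). The coefficients are chosen so that every iterate $f^n$ is Eisenstein at $\mathfrak p$ (say $f\equiv x^d \bmod \mathfrak p$ with constant term of valuation $1$). This gives irreducibility of each $f^n(x)$ over $L$, hence transitivity of $G_{f,n,0}$ on level $n$. It also gives a totally ramified inertia group at $\mathfrak p$, which after a standard argument contains a cycle inside a single block. The induction step reduces to two facts. First, the block groups are all of $S_d$; here one uses a long cycle from the $\mathfrak p$-adic inertia together with a transposition, since a transitive group with a $(d-1)$-cycle and a transposition is $S_d$. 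Second, there is no coupling between blocks. For the second fact, it suffices to find a transposition in $\Gal(K_n/K_{n-1})$ that moves only roots above a single $\beta$. Conjugating by the transitive action of $G_{f,n-1,0}$ then spreads it to every block, and together with the block groups this generates $S_d^{d^{n-1}}$.

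The transposition will come from inertia at a new prime $\mathfrak q_n$. By the discriminant formula for iterates, $\text{disc}(f^n)$ is, up to powers of $\text{disc}(f^{n-1})$ and the leading coefficient, a product of the critical-orbit values $f^n(\gamma)$ over the critical points $\gamma$ of $f$. If $\mathfrak q_n\notin S$ divides some $f^n(\gamma)$ to odd valuation (ideally exactly $1$), and $\mathfrak q_n$ is prime to all earlier $f^m(\gamma)$, $m<n$, and to $d$, then $\mathfrak q_n$ is unramified in $K_{n-1}$. Its tame inertia in $K_n$ is then generated by a transposition swapping the two roots that collide at $\gamma$, as in the standard argument of Odoni and Jones. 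Infinitely many such $f$ arise by varying the free coefficients in the $\mathfrak p$-adic open set. For instance, one can take a one-parameter family with $\gamma=0$ a simple critical point, which keeps the orbit computation explicit.

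The main obstacle is the primitive-prime-divisor step: guaranteeing for every $n$ a prime $\mathfrak q_n\notin S$, new relative to the earlier orbit values, that divides $f^n(\gamma)$ to odd order. This is exactly where the hypothesis that $L$ is unramified outside finitely many primes is used. Since only finitely many primes of $L$ ramify over $\mathbb{Q}$, any rational prime outside a finite set has its valuation behave as over $\mathbb{Q}$. I would first try to arrange the orbit of $\gamma$ to lie essentially in $\mathbb{Z}$, or in the ring of integers of a fixed number field inside $L$, with a strong divisibility or rigid-divisibility property, e.g. $f^n(\gamma)\mid f^{m}(\gamma)$-type behaviour and exact-valuation control via choosing $f(\gamma)$ appropriately. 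Then I would apply a Zsigmondy-type argument on the growth $|f^n(\gamma)|\approx C^{d^n}$ to force a new prime at each level. Oddness of the valuation would be handled by the same congruence choices that make the valuations exactly $1$.
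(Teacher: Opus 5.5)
The paper gives no proof of this statement: it is quoted from \cite{Specter} as background, so there is no argument in the paper to compare with. Judged on its own, your reduction is fine as far as it goes. This includes the bound $|G_{f,n,0}|\le |G_{f,n}|\le |[S_d]_{i=1}^n|$ and Eisenstein iterates at a fixed $\mathfrak p\nmid d$ for transitivity. It also includes the fact that a transposition in $H=\Gal(K_n/K_{n-1})$ supported on one block, together with full block groups $S_d$, forces $H=S_d^{d^{n-1}}$ by normality of the block-supported subgroups. But the step that carries all the content is missing. You need, for \emph{every} $n$ (not merely all but finitely many), a prime $\mathfrak q_n\notin S$, prime to $d$, that divides $f^n(\gamma)$ to odd order for exactly one simple critical point $\gamma$ and divides no $f^m(\gamma')$ with $m<n$ for \emph{any} critical point $\gamma'$. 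The tools you name do not produce this. Zsigmondy-type theorems for rigid divisibility sequences give a primitive prime divisor only for all but finitely many $n$, with no control over the parity of its exponent; a new prime dividing $f^n(\gamma)$ to even order yields no ramification. Congruence conditions on the coefficients fix valuations only at finitely many prescribed primes (e.g.\ your $\mathfrak p$, where indeed $v_{\mathfrak p}(f^n(0))=1$ for all $n$). So they cannot force $v_{\mathfrak q_n}(f^n(\gamma))=1$ at unknown primes that vary with $n$. In general, odd-exponent primitive divisors in critical orbits are known only conditionally (e.g.\ on $abc$) or for specially engineered families. Finally, for $d\ge 3$ a one-parameter family with $0$ a simple critical point leaves the other $d-2$ critical orbits uncontrolled. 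Yet they enter both the unramifiedness of $\mathfrak q_n$ in $K_{n-1}$ (through $\mathrm{disc}(f^{n-1})$) and the requirement that inertia at $\mathfrak q_n$ be a single transposition rather than a product of transpositions in several blocks.

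The block-group step also has a gap for composite $d$. Tame inertia at your Eisenstein prime is generated by a $d^n$-cycle whose $d^{n-1}$-th power acts as a $d$-cycle on every block; it does not supply a $(d-1)$-cycle. A transitive group containing a $d$-cycle and a transposition is $S_d$ when $d$ is prime, since transitive implies primitive. That fails in general: $\langle (1\,2\,3\,4),(1\,3)\rangle\le S_4$ is dihedral of order $8$. So you would need either primitivity of $\Gal(K_{n-1}(f^{-1}(\beta))/K_{n-1})$ or an additional local condition producing a $(d-1)$-cycle at every level, and you supply neither. To turn the outline into a proof, you must replace the Zsigmondy hope by an unconditional mechanism valid at every level. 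One option is to choose $f$, with all critical points in a fixed field, so that away from a fixed finite set of primes all critical-orbit values are pairwise coprime. In addition, each $f^n(\gamma)$ must be provably not an $S$-unit times a square, via congruences that propagate along the orbits. This is in the spirit of the classical unconditional quadratic examples, and it would have to handle all $d-1$ critical orbits simultaneously. As written, your proposal correctly identifies this as the main obstacle but does not overcome it.
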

Dittmann and Kadets showed Theorem \ref{Specter} cannot be extended to the case of ramification occurring over infinitely many primes.
\begin{theorem}[Dittmann-Kadets, \cite{DK}]\label{DK}
Let $k$ be a countable Hilbertian field of characteristic 0. Then $\exists$ a pseudo-algebraically closed Hilbertian extension $F/k$ such that for all polynomials $f(x)/F$ of degree $d\geq 2$, we have  $[\text{Aut}(T_\infty^d):G_{f,0}]=\infty.$
\end{theorem}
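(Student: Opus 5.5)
We plan to build $F$ as a random Galois fixed field: let $\tilde k$ be a separable (here algebraic, as $\mathrm{char}=0$) closure of $k$ and choose $\sigma\in\Gal(\tilde k/k)$ at random with respect to normalized Haar measure, setting $F=\tilde k^{\langle\sigma\rangle}$. By the Jarden / Fried--Jarden results for countable Hilbertian $k$, almost every such $F$ is pseudo-algebraically closed and Hilbertian, with $\Gal(\tilde k/F)\cong\hat{\mathbb Z}$. The work is then to show that almost surely, for every $f\in F[x]$ of degree $d\ge2$, $[\mathrm{Aut}(T^d_\infty):G_{f,0}]=\infty$. Since $k$ is countable, $\tilde k$ and hence $\tilde k[x]$ are countable. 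So it suffices to fix one polynomial $f\in\tilde k[x]$ of degree $d$ and prove the claim almost surely on the event $f\in F[x]$; a countable union of null sets is null.

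\textbf{Key structural input.} Over $F$ the absolute Galois group is procyclic, so $G_{f,0}$, a quotient of $\Gal(\tilde k/F)$, is procyclic. On each finite level the image is then a cyclic subgroup of $[S_d]_{i=1}^n$. The $n$-th wreath power has order $(d!)^{(d^n-1)/(d-1)}$, while its largest element order grows only like $d^n\cdot C$ (more precisely, element orders in $\mathrm{Aut}(T_n^d)$ are bounded by $\mathrm{lcm}(1,\dots,d)^{n}$). So $|G_{f,n,0}|\le C^n$, while $|\mathrm{Aut}(T^d_n)|$ grows doubly exponentially in $n$. Hence $[\mathrm{Aut}(T^d_\infty):G_{f,0}]=\lim_n|\mathrm{Aut}(T_n^d)|/|G_{f,n,0}|=\infty$, using Definition \ref{Group Index Def} adapted to $\mathrm{Aut}(T^d_\infty)$ via Theorem \ref{Subgps of Sd Wreath Prod}. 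In characteristic $0$ there is no separability subtlety, since $g_{f,n}$ is just $f^n$ with repeated roots removed, and the tree bound still applies to the action on $f^{-n}(0)$.

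\textbf{Order of steps.} (1) Recall that almost all $\sigma$ give PAC Hilbertian $F$ with procyclic absolute Galois group. (2) Reduce to countably many $f$. (3) Bound element orders in $[S_d]_{i=1}^n$ by induction on $n$: an element $(\tau;g_1,\dots,g_d)$ has order dividing $\mathrm{ord}(\tau)$ times the order of a product of $g_i$'s along cycles, giving the bound $(d!)^{n}$ or better. (4) Compare orders to get an infinite index.

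\textbf{Expected obstacle.} The main subtlety is step (1): Hilbertianity of almost all $\tilde k^{\langle\sigma\rangle}$ with $e=1$. This holds for countable Hilbertian $k$ by Jarden's theorem, which is exactly where countability is used. The order bound itself is elementary. If the PAC/Hilbertian result only holds for $e$-tuples, we would take $e=1$ and, if needed, verify the element-order bound more carefully via the cycle-product description.
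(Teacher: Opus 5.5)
There is a genuine gap in step (1), and it breaks the whole approach. Jarden's theorem does give that, for almost all $\sigma$, the field $\tilde k(\sigma)=\tilde k^{\langle\sigma\rangle}$ is PAC with $\Gal(\tilde k/\tilde k(\sigma))\cong\hat{\mathbb Z}$. But such a field is \emph{never} Hilbertian.

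A procyclic group has at most one open subgroup of each index, so $\tilde k(\sigma)$ has at most one quadratic extension. A Hilbertian field $F$ of characteristic $0$ has infinitely many. Indeed, suppose $a_1,\dots,a_r$ represented all of $F^\times/F^{\times 2}$. A Hilbertian specialization $t\mapsto t_0\neq 0$ keeping every $x^2-a_it$ irreducible would contradict $t_0\in a_jF^{\times 2}$ for some $j$. The same obstruction applies to $e$-tuples, since $\hat F_e$ is finitely generated and such a group has only finitely many open subgroups of bounded index (Lemma \ref{lemma fin gen=fin many open subgp}).

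So the procyclicity that drives your counting is exactly what Hilbertianity forbids. Your order estimate itself is correct: the exponent of $[S_d]_{i=1}^n$ divides $\mathrm{lcm}(1,\dots,d)^n$, while its order is $(d!)^{(d^n-1)/(d-1)}$. But it only proves the easy fact, noted in the introduction, that over PAC fields with small absolute Galois group the index is trivially infinite. The Hilbertian requirement is precisely what makes Theorem \ref{DK} non-trivial.

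The missing idea is a mechanism that makes $G_{f,0}$ small even when $G_F$ is large. The natural fix is to take $F=F^{\bar\sigma}$, the fixed field of the closed normal subgroup generated by $\sigma$, i.e.\ the maximal Galois extension of $k$ inside $\tilde k(\sigma)$. This field is PAC and Hilbertian for almost all $\sigma$ by Theorems \ref{almost every field is PAC} and \ref{Bary}. However, its absolute Galois group is not finitely generated, so no element-order bound is available.

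Dittmann--Kadets, and this paper in Corollary \ref{cor to main theorem tree aut gp}, proceed differently:
\begin{enumerate}
\item Work over a finite extension $K/k$ containing the coefficients of $f$.
\item Exhibit infinitely many distinct minimal Galois extensions of $K$ of bounded degree, coming from quotients of the monodromy group. In Dittmann--Kadets these are quadratic discriminant extensions from index-$2$ subgroups of $\prod\mathbb{Z}/2\mathbb{Z}$; here they are fixed fields of open maximal normal subgroups, with degree bounded via Theorem \ref{Finite Simple Gp Bd}.
\item Use the pairwise Borel--Cantelli Theorem \ref{BC} to show that, whenever $F^{\bar\sigma}\supseteq K$, it almost surely contains infinitely many of these extensions.
\end{enumerate}
This forces the specialized group over $F^{\bar\sigma}$ to have infinite index in the one over $K$, hence infinite index in $\mathrm{Aut}(T_\infty^d)$. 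Your reduction to countably many $(K,f)$ then finishes the argument as you propose.
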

The proof of Theorem \ref{DK} uses a set of quadratic discriminant extensions coming from the index 2 subgroups of $\prod_{i=1}^\infty \mathbb{Z}/2\mathbb{Z}$. By lifting to the preimage of a natural group homomorphism, these correspond to degree 2 Galois extensions of $K$, where $K$ is a finite extension of $k$ over which $f$ is defined. Dittmann and Kadets then show that $F$ contains infinitely many of these minimal extensions to finish the proof.\\

We generalize some of these ideas to show that one actually obtains the same infinite index result over ``almost any'' randomly selected Galois fixed field extension in a natural probabilistic sense. Moreover, we describe cases where we get the stronger result that $[G_f: G_{f,a}]=\infty$. 
Namely, we only require that the finite extensions of $k$ have infinitely many minimal Galois extensions, by which we mean Galois extensions not containing any nontrivial, proper Galois subextensions. Also, we replace a technical result by Dittmann and Kadets on a family of vast field extensions (Theorem 1.4 in \cite{DK}) with an easier result on the fixed fields of certain subgroups. To make all this precise, one must know that all profinite groups have the Krull topology. Profinite groups also have the Haar measure, which can be normalized to form a probability measure. We shall also need the following definitions.
\begin{definition}[Common Notation]\label{Notation}
Let $L$ be any infinite field and $f\in L[x]$ any polynomial of degree greater than 1. We use the following notation throughout the rest of the paper.
\begin{itemize}
    \item $\mathbf{G_L}$ is the absolute Galois group over $L$.
    \item $\mathbf{G_f}$ \textbf{over} $\mathbf{L}$ is $\text{Gal}(L(f^{-\infty}(t))/L(t))$. Similarly, $\mathbf{G_{f,a}}$ \textbf{over} $\mathbf{L}$ is $\text{Gal}(L(f^{-\infty}(a))/L)$. (We may omit the phrase ``over $L$'' if the ground field is understood by context.)
    \item $\boldsymbol{\mu_L}$ is the normalized Haar measure on $G_L$. Similarly, $\boldsymbol{\mu_{f,a}}$ is the normalized Haar measure on $G_{f, a}$.
    \item $\boldsymbol{\bar{\sigma}}$ denotes the smallest closed and normal subgroup containing a given $\sigma\in\text{Gal}(M/L)$ for a Galois extension $M/L$.
    \item A \textbf{minimal Galois field extension} $M/L$ is a nontrivial Galois extension with no nontrivial, proper Galois subextension.
    \item $\boldsymbol{\zeta(d^n)}$ is the set of all $d^n$-th roots of unity. Similarly, $\boldsymbol{\zeta(d^\infty)}$ is the set of $d^n$-th roots of unity for all $n$. Also, $\boldsymbol{\zeta^\infty}$ is the set of all roots of unity.
    \item $\boldsymbol{L_d}$ is $L(\zeta(d^\infty))$. 
\end{itemize}
\end{definition}
\begin{definition}\label{open max normal def}
$N\triangleleft G_{f}$ is an \textbf{open maximal normal subgroup} when it is normal, open in the Krull topology (or equivalently closed and of finite index in $G_{f}$), and is maximal among all open normal subgroups (i.e. there is no open normal subgroup strictly between $N$ and $G_{f}$). A similar definition applies for $N\triangleleft G_{f, a}$.
\end{definition}
Now we can give our result.
\begin{theorem}[Main Result]\label{main theorem} Let $k$ be a countable Hilbertian field (of any characteristic). Given $f\in k^{sep}[x]$, let $k_f/k$ be the smallest extension of $k$ over which $f(x)$ is defined. Let $S$ be the set of $\sigma\in G_k$ where $F^{\bar{\sigma}}$ is a pseudo-algebraically closed Hilbertian field, over which any polynomial $f(x)$ of degree $d>1$ not divisible by $\text{char}(k)$ with $G_f$ over $k_f$ having infinitely many open maximal normal subgroups satisfies $[G_f: G_{f,a}]=\infty$ over $F^{\bar{\sigma}}$ for all $a\in F^{\bar{\sigma}}$.
We have that $\mu_k(S)=1$.
\end{theorem}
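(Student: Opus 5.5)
Here $F$ denotes a separable closure $k^{sep}$, so that $G_k=\text{Gal}(F/k)$ and $F^{\bar\sigma}$ is the fixed field of the closed normal subgroup generated by $\sigma$. The plan is to show that $S$ is an intersection of countably many events, each of $\mu_k$-measure one.

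\textbf{Step 1: the field is PAC and Hilbertian almost surely.} By the Jarden/Fried--Jarden theorems for countable Hilbertian $k$, for $\mu_k$-almost all $\sigma$ the fixed field $F^{\sigma}$ is PAC. Since $F^{\bar\sigma}\supseteq F^\sigma$ is algebraic over it, $F^{\bar\sigma}$ is PAC as well. Because $\bar\sigma$ is normal, $F^{\bar\sigma}/k$ is Galois. By Haran's diamond theorem, or the Fried--Jarden result that $F^{\bar\sigma}$ is Hilbertian for almost all $\sigma$, this fixed field is Hilbertian almost surely. I will cite these results rather than reprove them.

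\textbf{Step 2: reduce to a countable family.} There are only countably many pairs $(f,a)$ with $f\in k^{sep}[x]$ and $a\in k^{sep}$. So it suffices to fix $f$ (of degree $d$, $\text{char}(k)\nmid d$, with $G_f$ over $k_f$ having infinitely many open maximal normal subgroups) and $a$, and to show the following: for almost all $\sigma$ such that $a,\text{coeffs}(f)\in F^{\bar\sigma}$, we have $[G_f:G_{f,a}]=\infty$ over $F^{\bar\sigma}$. Set $K=k_f(a)$, a finite extension of $k$.

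The open maximal normal subgroups $N_1,N_2,\dots$ of $G_f$ over $k_f$ correspond to minimal Galois extensions $M_i/k_f(t)$ inside $k_f(f^{-\infty}(t))$. After passing to those where $k_f^{sep}\cap M_i=k_f$ (the constant ones can be discarded, or handled separately), Hilbert irreducibility over $K$ can be applied to the $M_i$. This specializes them at points $a'$, giving infinitely many linearly disjoint minimal Galois extensions.

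\textbf{Step 3: the finiteness criterion.} Over a ground field $L\supseteq K$, if $[G_f:G_{f,a}]<\infty$, then only finitely many $N_i$ fail to contain the image of the decomposition group. Equivalently, the specialized field $L(f^{-\infty}(a))$ must realize all but finitely many of the quotients $G_f/N_i$, up to finite index. Now suppose infinitely many of the minimal Galois extensions $E_i/K$ appearing as constant-field or specialization quotients lie inside $L$. Then the corresponding quotients of $G_{f,a}$ over $L$ collapse, while they survive in $G_f$ over $L(t)$, because $M_i$ is geometric (regular over $K$). Each such collapse contributes an index factor of at least $2$, so the index is infinite.

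\textbf{Step 4: main obstacle.} The hardest step is the probabilistic claim that for almost all $\sigma$, $F^{\bar\sigma}$ contains infinitely many of a given infinite family of linearly disjoint finite Galois extensions $E_i/K$ of bounded-below degree. I would prove it with Borel--Cantelli. $E_i\subseteq F^{\bar\sigma}$ holds iff $\sigma$ acts trivially on $E_i$, which has probability $1/[E_iK':k]\ge c/[E_i:K]$. These events are independent by linear disjointness.

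This only works if the degrees do not grow too fast. So I choose the family cleverly: by Hilbertianity of $K$, there are infinitely many linearly disjoint Galois extensions with a fixed group $G_f/N$ for one minimal quotient, for example obtained by specializing a single $M_1$ at infinitely many points. These have constant degree, so the probabilities sum to infinity and the second Borel--Cantelli lemma applies.

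Reconciling this fixed-group family with the criterion in Step 3 is the delicate point. One would need each such $E_j$ in $L$ to kill the quotient of $G_{f,a}$ over $L$ at a different, independent $N_i$. I expect to handle it by instead using infinitely many distinct $N_i$ of bounded index, which exist once infinitely many open maximal normal subgroups are assumed and the group is finitely generated at each level.
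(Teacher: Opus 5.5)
\textbf{There is a genuine gap: the family of extensions you feed to Borel--Cantelli never interacts with $G_{f,a}$.} Your skeleton matches the paper's: PAC and Hilbertian almost surely, a countable reduction, Borel--Cantelli over pairwise linearly disjoint minimal Galois extensions of bounded degree, then absorbing infinitely many of them into $F^{\bar\sigma}$. But the point you flag as ``delicate'' is where the proof actually lives, and you leave it unresolved.

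You use specializations of geometric $M_i$ at auxiliary points $a'$, or of a single $M_1$ at infinitely many points. These are not subfields of $K(f^{-\infty}(a))$, so placing them inside $F^{\bar\sigma}$ does nothing to $\text{Gal}(F^{\bar\sigma}(f^{-\infty}(a))/F^{\bar\sigma})$. The collapse in your Step 3 happens only for subextensions of the specialized tower at the given $a$.

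The paper's family is exactly that, and it runs as follows:
\begin{enumerate}
\item Assume $[G_f:G_{f,a}]<\infty$ over $F^{\bar\sigma}$.
\item Proposition~\ref{prop group index bound} pushes finiteness down to $K=k_f(a)$. It says the index over an algebraic extension is at least $d^{-m}$ times the index over the base, and rests on Theorem~\ref{Roots of unity} and Lemma~\ref{lemma gen preimages and all alg preimages}.
\item Then $G_{f,a}$ over $K$ has finite index in $G_f$, so it inherits infinitely many open maximal normal subgroups. Their fixed fields $L_j$ are minimal Galois subextensions of $K(f^{-\infty}(a))/K$.
\item This family depends only on the tuple $(K,f,a)$, which ranges over a countable set $\mathcal{A}$ fixed before $\sigma$ is chosen. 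So Proposition~\ref{Prop for main thm} applies.
\end{enumerate}
You have no analogue of the descent in step 2. Without it you can neither define the family over $K$ nor handle tuples where the index over $K$ is already infinite.

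\textbf{Your reason for bounded index does not work.} Each finite level is trivially finitely generated, which gives nothing. Topological finite generation of $G_f$ would, by Lemma~\ref{lemma fin gen=fin many open subgp}, force infinitely many open normal subgroups to have \emph{unbounded} index, which is the opposite of what you need. The paper's bound comes from $G_f\le\text{Aut}(T_d^\infty)$. Every simple quotient $G_{f,a}/N_j$ is a quotient of a subgroup of a finite iterated wreath product of $S_d$. Hence its order has no prime divisor exceeding $d$, and Theorem~\ref{Finite Simple Gp Bd} bounds it uniformly. A Jordan--H\"older argument also gives $|G_{f,a}/N_j|\le d!$ directly.

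\textbf{The endgame also needs a cyclotomic step you do not have.} Only finitely many $L_j$ lie in $K(\zeta(d^\infty))$, by Proposition~\ref{lemma min Gal ext lift over cyclotomic ext}. So with $M=F^{\bar\sigma}\cap K(\zeta(d^\infty))$, the group $G_{f,a}$ over $F^{\bar\sigma}$ has infinite index in $G_{f,a}$ over $M$. Meanwhile $G_f$ over $F^{\bar\sigma}$ equals $G_f$ over $M$, because the constant field of the generic tower is cyclotomic. Your remark that ``$M_i$ is geometric'' captures only this generic-side step.

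\textbf{Step 1 has the inclusion reversed.} Since $\bar\sigma\supseteq\langle\sigma\rangle$, we get $F^{\bar\sigma}\subseteq F^{\sigma}$, and PAC does not pass to subfields. You need the theorem stated directly for $F^{\bar\sigma}$, namely Theorem~\ref{almost every field is PAC}.
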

It will be shown that $G_f$ over $k_f$ having infinitely many open maximal normal subgroups implies the same condition for $G_f$ over $F^{\bar{\sigma}}$. Hence the stronger condition placed on $G_f$ in the definition of $S$ in Theorem \ref{main theorem} gives us more freedom regarding which fixed fields we are allowed to take. Conversely, it turns out that restricting ourselves to the fixed fields containing all roots of unity allows us to loosen the conditions on the polynomial $f(x)$ to achieve the same infinite index result. Hence we get the following corollary.
\begin{corollary}\label{cor to main theorm roots of unity included}
Let $k$ be a countable Hilbertian field (of any characteristic). Let $S'$ be the set of $\sigma\in G_{k(\zeta^\infty)}$ where $F^{\bar{\sigma}}$ is a pseudo-algebraically closed Hilbertian field, over which any polynomial $f(x)$ of degree $d>1$ not divisible by $\text{char}(k)$ with $G_f$ over $F^{\bar{\sigma}}$ having infinitely many open maximal normal subgroups satisfies $[G_f: G_{f,a}]=\infty$ for all $a\in F^{\bar{\sigma}}$. Then $\mu_{k(\zeta^\infty)}(S')=1$.
\end{corollary}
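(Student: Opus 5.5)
The plan is to derive Corollary \ref{cor to main theorm roots of unity included} by running the argument of Theorem \ref{main theorem} with $k(\zeta^\infty)$ in place of $k$. The two statements differ in two places. Here the ground field already contains $\zeta^\infty$. Here also the hypothesis on $G_f$ is made over $F^{\bar{\sigma}}$ itself rather than over $k_f$.

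First I would note that $k(\zeta^\infty)$ is an abelian extension of the Hilbertian field $k$. By Kuyk's theorem it is Hilbertian; it is also countable. So the hypotheses of Theorem \ref{main theorem} hold for the ground field $k(\zeta^\infty)$. Applying the theorem (or rather its proof) with this ground field gives a measure-one set of $\sigma\in G_{k(\zeta^\infty)}$. For these $\sigma$, the field $F^{\bar{\sigma}}$ is PAC and Hilbertian, and for every finite extension $K$ of $k(\zeta^\infty)$, the field $F^{\bar{\sigma}}$ meets infinitely many pairwise independent minimal Galois extensions of $K$. I would extract this last property directly from the proof of the main theorem. It is the probabilistic input: each fixed minimal extension has positive Haar probability of being contained in $F^{\bar\sigma}$, and a Borel--Cantelli argument over countably many independent ones gives the rest.

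The second step replaces the condition on $k_f$. In the main theorem, the reason for assuming infinitely many open maximal normal subgroups of $G_f$ over $k_f$ is to guarantee that this property transfers to $F^{\bar{\sigma}}$. The transfer needs control of the constant field extension $F^{\bar\sigma}\cap k_f(f^{-\infty}(t))$. For $d$-ary trees, the constant field of $G_f$ lies in $L(\zeta(d^\infty))$ up to a finite extension, by the usual geometric/arithmetic monodromy comparison. Once $\zeta^\infty\subset F^{\bar\sigma}$, the arithmetic and geometric parts coincide up to finite index. So I can apply the hypothesis directly over $F^{\bar{\sigma}}$ and obtain infinitely many open maximal normal subgroups $N_i\triangleleft G_f$ over $F^{\bar\sigma}$. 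Each $N_i$ has a simple finite quotient $G_f/N_i$.

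Third comes the index argument. Fix $a\in F^{\bar{\sigma}}$. Each $N_i$ corresponds to a minimal Galois extension $M_i/F^{\bar\sigma}(t)$. Since $\zeta^\infty\subset F^{\bar\sigma}$, $M_i$ is regular or becomes regular after a finite step. Specializing $t\mapsto a$ gives extensions of $F^{\bar{\sigma}}$. Because $F^{\bar\sigma}$ is PAC and meets infinitely many of the minimal extensions, the specialization collapses: the decomposition group misses $G_f/N_i$ for infinitely many $i$. Equivalently, $D_n J_n$ lies in a proper subgroup that these quotients detect. By Definition \ref{Group Index Def}, each such collapse multiplies the lower bound on $|G_{f,n}|/|G_{f,n,a}|$ by at least $2$, so the index is infinite.

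The main obstacle is this last step. I must show that the infinitely many simple quotients each independently force a drop at the specialization $a$, for every $a$ simultaneously. The measure-one statement has to hold uniformly in $a$ and in $f$. I would handle the uniformity by countability: there are countably many pairs $(f,a)$ over the countable field $\overline{k}$, a countable intersection of measure-one sets still has measure one, and so it suffices to prove the bound for each fixed pair.
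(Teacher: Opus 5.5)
\paragraph{Verdict.} Your proposal has a genuine gap in Step 3. It also misses the short route the paper takes, which your Steps 1 and 2 nearly reach.

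\paragraph{The paper's route.} Your Step 1 is the right start, and with one exact observation it is the whole proof. Since $k(\zeta^\infty)/k$ is abelian, $k(\zeta^\infty)$ is a countable Hilbertian field by Theorem \ref{Abelian Extensions}. So Theorem \ref{main theorem} applies as stated, not just its proof, with ground field $k(\zeta^\infty)$, and gives a $\mu_{k(\zeta^\infty)}$-full set of $\sigma$. For $f$ over $F^{\bar\sigma}$, the field $k_f$ is now the smallest extension of $k(\zeta^\infty)$ over which $f$ is defined, so $k(\zeta^\infty)\subseteq k_f\subseteq F^{\bar\sigma}$. Both fields contain $\zeta(d^\infty)$, so by Corollary \ref{cor roots of unity} the groups $G_f$ over $k_f$ and over $F^{\bar\sigma}$ both equal $G_f$ over $\bar k$. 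Hence the corollary's hypothesis is literally the theorem's hypothesis, and that is the paper's two-line proof.

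\paragraph{The problem with Step 2.} Your Step 2 aims at this but only claims that the arithmetic and geometric groups coincide \emph{up to finite index}. That is weaker than what Fried's theorem gives, and it is not enough. You must pass from the group over $F^{\bar\sigma}$ up to the possibly larger group over $k_f$ (or over the finite field $K$). Having infinitely many open maximal normal subgroups does not ascend from a finite-index subgroup. For example, if $B$ is an infinite product of nonabelian finite simple groups, then $B\times B$ has infinitely many, while $(B\times B)\rtimes C_2$, with $C_2$ swapping the factors, has exactly one.

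\paragraph{The gap in Step 3.} Instead of stopping there, you rebuild the index argument, and this is where the genuine gap lies, as you partly acknowledge.
\begin{itemize}
\item Your $N_i$ are subgroups of $G_f$ over $F^{\bar\sigma}$, so they depend on $\sigma$. The Borel--Cantelli input, however, must be a countable family of extensions fixed before $\sigma$ is drawn.
\item The property you extract in Step 1, that $F^{\bar\sigma}$ contains infinitely many minimal Galois extensions of a finite $K$, is not tied to the tower $K(f^{-\infty}(a))$. So it cannot affect $G_{f,a}$.
\end{itemize}

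\paragraph{The missing idea.} The paper's proof of Theorem \ref{main theorem} argues by contradiction.
\begin{itemize}
\item Suppose $[G_f:G_{f,a}]<\infty$ over $F^{\bar\sigma}$. By Proposition \ref{prop group index bound}, it is then finite over the finite field $K$ generated by $a$ and the coefficients of $f$.
\item So $G_{f,a}$ over $K$ has finite index in $G_f$ and inherits infinitely many open maximal normal subgroups. Equivalently, $K(f^{-\infty}(a))$ contains infinitely many distinct minimal Galois extensions $L_j/K$.
\item Their degrees are uniformly bounded by Theorem \ref{Finite Simple Gp Bd}, since every prime divisor of those degrees divides $d!$.
\item These families, one for each of the countably many triples $(K,f,a)$, are what Proposition \ref{Prop for main thm} is applied to. Your countability remark is the right device for uniformity, but it only reduces the problem to this single-triple step.
\item Then $F^{\bar\sigma}$ contains infinitely many of the pairwise linearly disjoint $L_j$. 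So $G_{f,a}$ over $F^{\bar\sigma}$ has infinite index in $G_{f,a}$ over $K$, while $G_f$ does not shrink (Corollary \ref{cor roots of unity} again).
\end{itemize}

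\paragraph{Unsupported claims.} Your assertion that the PAC property makes the decomposition group miss $G_f/N_i$ is unsupported; PAC plays no role in the index argument. Your \emph{factor of at least $2$ per collapse} would also need exactly the linear disjointness you never establish.
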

Clearly any $G_{f,a}$ satisfying the conclusion of Theorem \ref{main theorem} also has infinite index in the larger group $\text{Aut}(T_d^\infty)$. For any polynomial not satisfying the assumption of Theorem \ref{main theorem} (i.e. $G_f$ over $k_f$ has only finitely many open maximal normal subgroups), we trivially have that $[\text{Aut}(T_d^\infty): G_f]=\infty$ over $k_f$ and so over $F^{\bar{\sigma}}$ since $\text{Aut}(T_d^\infty)$ clearly has infinitely many such subgroups. Hence we easily get the following corollary that is a direct generalization of Theorem \ref{DK}.
\begin{corollary}\label{cor to main theorem tree aut gp}
Let $k$ be a countable Hilbertian field (of any characteristic). Let $S''$ be the set of $\sigma\in G_k$ where $F^{\bar{\sigma}}$ is a pseudo-algebraically closed Hilbertian field, over which any polynomial $f(x)$ of degree $d>1$ not divisible by $\text{char}(k)$ with $G_{f,a}$ over $F^{\bar{\sigma}}$ satisfies $[\text{Aut}(T_d^\infty): G_{f,a}]=\infty$ for all $a\in F^{\bar{\sigma}}$. Then $\mu_k(S'')=1$.
\end{corollary}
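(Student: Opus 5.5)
The plan is to show $S\subseteq S''$. Then $\mu_k(S'')\geq \mu_k(S)=1$ by Theorem \ref{main theorem}, taking the completed Haar measure, or equivalently noting that $S''$ contains a set of full measure. Fix $\sigma\in S$ and write $F=F^{\bar\sigma}$. By the definition of $S$, $F$ is already pseudo-algebraically closed and Hilbertian. So it remains to take any $f\in F[x]$ of degree $d>1$ with $\text{char}(k)\nmid d$, and any $a\in F$, and show $[\text{Aut}(T_d^\infty):G_{f,a}]=\infty$ over $F$. Since $f\in F[x]$, the minimal field of definition satisfies $k_f\subseteq F$. By Theorem \ref{Subgps of Sd Wreath Prod}, at each level $n$ we have
\[
\frac{|\text{Aut}(T_d^n)|}{|G_{f,n,a}|}=\frac{|\text{Aut}(T_d^n)|}{|G_{f,n}|}\cdot\frac{|G_{f,n}|}{|G_{f,n,a}|},
\]
where both factors are at least $1$ and nondecreasing in $n$. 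Hence it suffices to show that either $[G_f:G_{f,a}]=\infty$ over $F$, or $[\text{Aut}(T_d^\infty):G_f]=\infty$ over $F$.

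I split into two cases according to whether $G_f$ over $k_f$ has infinitely many open maximal normal subgroups. If it does, then $f$ satisfies the hypothesis in the definition of $S$, and since $\sigma\in S$ we get $[G_f:G_{f,a}]=\infty$ over $F$, which settles this case.

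In the other case, $G_f$ over $k_f$ has only finitely many open maximal normal subgroups. The key observation is that any closed subgroup $H\leq \text{Aut}(T_d^\infty)$ of finite index has infinitely many open maximal normal subgroups. To see this, consider the continuous surjection
\[
\text{Aut}(T_d^\infty)\to\prod_{n\ge1}\mathbb{Z}/2\mathbb{Z}
\]
whose $n$-th coordinate is the sign of the permutation induced on level $n$. Each coordinate is onto, and the coordinates are jointly onto, because one can realize a single transposition at level $n$ that fixes all lower levels. The image of $H$ is a closed subgroup of finite index, hence infinite. An infinite closed subgroup of $\prod\mathbb{Z}/2\mathbb{Z}$ has infinitely many open index-$2$ subgroups, and their pullbacks to $H$ are distinct index-$2$ normal subgroups of $H$. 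Each of these is automatically open and maximal. Applied to $H=G_f$ over $k_f$, this forces $[\text{Aut}(T_d^\infty):G_f]=\infty$ over $k_f$. Next, restriction of automorphisms gives injections $\text{Gal}(F(f^{-n}(t))/F(t))\hookrightarrow\text{Gal}(k_f(f^{-n}(t))/k_f(t))$ that are compatible with the tree actions. So $|G_{f,n}|$ over $F$ is at most $|G_{f,n}|$ over $k_f$, and therefore $[\text{Aut}(T_d^\infty):G_f]=\infty$ over $F$ as well, which settles this case.

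The step needing the most care is the claim that finite-index subgroups of $\text{Aut}(T_d^\infty)$ have infinitely many open maximal normal subgroups, specifically the surjectivity onto $\prod\mathbb{Z}/2\mathbb{Z}$. This is where $d\ge2$ is used: a transposition of two children of a single vertex at level $n-1$ acts trivially above level $n$ and is odd on level $n$. Composing such a transposition with its induced action on deeper levels gives any prescribed sign pattern inductively, so the sign map is indeed onto. The remaining bookkeeping is with the limit in Definition \ref{Group Index Def}, which is routine given the monotonicity already noted in the paper.
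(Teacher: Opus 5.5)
Your argument is correct and follows the same route as the paper. You split on whether $G_f$ over $k_f$ has infinitely many open maximal normal subgroups, invoke Theorem~\ref{main theorem} in the first case, and in the second case deduce $[\text{Aut}(T_d^\infty):G_f]=\infty$ over $k_f$ and hence over $F^{\bar{\sigma}}$. Your level-wise sign map onto $\prod_{n\ge1}\mathbb{Z}/2\mathbb{Z}$ supplies the justification the paper only gestures at when it says $\text{Aut}(T_d^\infty)$ ``clearly has infinitely many such subgroups'': it shows that every closed finite-index subgroup inherits infinitely many open maximal normal subgroups, and your factorization of the level-$n$ indices then combines the two cases cleanly.
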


Given that $[\text{Aut}(T_d^\infty): G_{f, 0}]=\infty$ for every $f\in k[x]$ of degree $d$, Theorem \ref{DK} does not claim that any proper extension $L/k$ satisfies the analogous conclusion. In contrast, Theorem \ref{main theorem} gives infinitely many countable proper extensions $F^{\bar{\sigma}}$ of $k$ satisfying $[G_f:G_{f,a}]=\infty$ for every $a\in F^{\bar{\sigma}}$ and polynomial $f(x)$ over $F^{\bar{\sigma}}$ where $G_f$ has infinitely many open maximal normal subgroups, regardless of whether $k$ satisfies the analogous conclusion.\\

To see this, we use a variation of the argument via quadratic discriminant extensions by Dittmann and Kadets in \cite{DK}. Suppose we have two quadratic (separable) extensions $L_1/k$ and $L_2/k$ such that the natural continuous group homomorphism $\phi:G_k\rightarrow \mathbb{Z}/2\mathbb{Z}\times \mathbb{Z}/2\mathbb{Z}$ is surjective. Given any $\sigma\in G_k$, notice that $N:=\phi^{-1}(\langle \sigma\rangle)$ is a proper open normal subgroup of $G_k$ containing $\sigma$. Hence $\bar{\sigma}\neq G_k$ or equivalently $k\subsetneq F^{\bar{\sigma}}$.\\

In order to obtain such quadratic extensions $k(\sqrt{a})$ and $k(\sqrt{b})$ when $\text{char}(k)\neq 2$, it suffices to choose $a,b\in k$ such that $f(x)=x^2-a$, $g(x)=x^2-b$, and $h(x)=x^2-ab$ are all irreducible over $k$. In the event that $\text{char}(k)=2$, one can choose $a,b\in k$ such that $f(x)=x^2+x+a$,  $g(x)=x^2+x+b$, and $h(x)=x^2+x+a+b$ are all irreducible over $k$. Letting $\alpha$ be a root of $f(x)$ and $\beta$ be a root of $g(x)$, one can check that $\beta\not\in k(\alpha)$, hence giving us the desired quadratic extensions $k(\alpha)/k$ and $k(\beta)/k$.

\subsection{Examples}
Benedetto et. al. in \cite{Tucker} showed that when $G_{f}$ is a pro $p$-group, it has infinitely many open maximal subgroups. Since every open maximal subgroup of any pro $p$-group is also normal, we see that $G_f$ being a pro $p$-group is enough to satisfy the assumptions in Theorem \ref{main theorem}.\\

We say a polynomial is ``post critically finite'' (PCF) when its critical points are preperiodic. Much of these results depend on whether $f(x)$ is PCF. For example, Benedetto et. al. in \cite{Tucker} showed that whenever $f$ is a PCF rational quadratic function, we have $G_f\cong G_{f,a}$ for all $a\in k\setminus \Omega$ where $\Omega$ is a thin set. Assuming the $abc$-conjecture and the conjecture on the irreducibility of iterates in \cite{JonesLevy}, the converse is also true. Moreover, these assumptions imply any non-PCF cubic polynomial has $[G_f:G_{f,a}]=\infty$ (see \cite{Bridyquad}, \cite{Bridyquestion}, \cite{BridyCubic}, \cite{JuulKrieger}). We say $f$ has colliding critical points when the $n$ iterates of two of them coincide for some $n$. In the quadratic rational case, this condition means $f$ is also PCF.\\

Other known examples of polynomials $f$ with $G_f$ having infinitely many open maximal normal subgroups satisfy a related property called ``infinite abelianization''. In order to define it, we must know certain structural properties of PFIMG.
\begin{definition}\label{Def Wreath Prod}
Let $\{G_i\}_{i=1}^\infty$ be a sequence of finite groups with $G_i\leq S_m$ for some fixed $m>1$. The \textbf{wreath product of $\mathbf{G_i}$ and $\mathbf{G_{i+1}}$}, denoted $G_i\wr G_{i+1}$, is
\[G_i \wr G_{i+1}:=G_i\ltimes (\underbrace{G_{i+1}\times G_{i+1}\times \cdots \times G_{i+1}}_{m \ \text{times}}).\]
Iterating this construction over all $i$ gives us the iterated wreath product $\mathbf{[G_i]_{i=1}^\infty}$.
\end{definition}
\begin{proposition}
Given a group $G$, let $G^{ab}$ denote the abelianization of $G$. For an iterated wreath product $G=[G_i]_{i=1}^\infty$, we have $G^{ab}\cong \prod_{i=1}^\infty G_i^{ab}$.
\end{proposition}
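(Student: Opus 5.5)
The plan is to prove the statement first for the finite truncations of the iterated wreath product and then pass to the inverse limit. Throughout I assume, as in every case relevant to the paper, that each $G_i\leq S_m$ acts \emph{transitively} on $\{1,\dots,m\}$; this holds, for instance, whenever the groups arise from irreducible iterates. The statement needs this hypothesis: if $G_1$ is trivial, the next level contributes $m$ copies of $G_2^{ab}$ rather than one. For $n\geq 1$ let $W_n=[G_i]_{i=1}^n$ be the $n$-level iterated wreath product. Grouping the levels from the top gives
\[W_n\cong G_1\ltimes (W'_{n-1})^m,\]
where $W'_{n-1}=[G_i]_{i=2}^{n}$ and $G_1$ permutes the $m$ factors. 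The profinite group is $G=\varprojlim W_n$, with transition maps the restrictions to the top $n$ levels.

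The key algebraic lemma concerns a semidirect product $H\ltimes N$ of finite groups:
\[(H\ltimes N)^{ab}\cong H^{ab}\times (N^{ab})_H,\]
where $(N^{ab})_H$ denotes the $H$-coinvariants. To prove it, note that the commutator subgroup is generated by $[H,H]$, $[N,N]$ and the elements $[h,n]$. Hence quotienting first by $[N,N]$ and then by the relations $h\cdot x=x$ identifies $N^{ab}$ with its coinvariants, and the splitting $H\to H\ltimes N$ survives to the abelianization. Apply this with $N=K^m$, $K=W'_{n-1}$, and $H=G_1$ acting by permuting coordinates. Then $N^{ab}=(K^{ab})^m$, and because $G_1$ is transitive, the summation map $(K^{ab})^m\to K^{ab}$ identifies the coinvariants with $K^{ab}$. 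The only relations imposed are $x_i=x_j$ for $i,j$ in the same orbit. Thus $W_n^{ab}\cong G_1^{ab}\times (W'_{n-1})^{ab}$, and induction on $n$ (applied to the shifted sequence $G_2,G_3,\dots$) gives
\[W_n^{ab}\cong\prod_{i=1}^n G_i^{ab}.\]
This isomorphism is natural: the projection $W_{n+1}\to W_n$ induces the coordinate projection that forgets the factor $G_{n+1}^{ab}$. I would check this compatibility explicitly, since the sum map above is compatible with truncation.

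Finally I pass to the limit. For a profinite group, $G^{ab}$ means $G/\overline{[G,G]}$. Since the image of $\overline{[G,G]}$ in each $W_n$ is $[W_n,W_n]$, we get
\[G^{ab}=\varprojlim W_n/[W_n,W_n].\]
Here exactness of $\varprojlim$ on inverse systems of finite (hence compact) groups handles the closure. Combining this with the compatible isomorphisms from the previous step yields $G^{ab}\cong\varprojlim_n\prod_{i\le n}G_i^{ab}=\prod_{i=1}^\infty G_i^{ab}$.

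I expect the main obstacle to be the coinvariant computation together with its compatibility across levels. That is precisely where transitivity is used, and where one must check that the isomorphisms assemble into a map of inverse systems. The limit step itself is routine.
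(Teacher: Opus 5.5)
The paper gives no proof of this proposition; it is stated only as background for the definition of infinite abelianization. So there is no argument of the paper's to compare yours against. Your argument is correct and is the standard one.

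\textbf{The three steps check out.}
\begin{itemize}
\item The formula $(H\ltimes N)^{ab}\cong H^{ab}\times (N^{ab})_H$ holds. The map $hn\mapsto(\bar h,\bar n)$ is a homomorphism because $h_2^{-1}n_1h_2\equiv n_1$ in the coinvariants, and its kernel is generated by elements of $[H,H]$, $[N,N]$ and $[h,n]$, all of which are commutators.
\item The coinvariants of the permutation module $(K^{ab})^m$ are $\bigoplus_{\text{orbits}}K^{ab}$, which is $K^{ab}$ exactly when $G_1$ is transitive.
\item The limit step is fine. One has $p_n([G,G])=[W_n,W_n]$, and any $g$ whose images all lie in $[W_n,W_n]$ is a limit of elements of $[G,G]$.
\end{itemize}

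\textbf{Transitivity is a genuine correction, not a convenience.} The statement as printed is false without it. Take $m=3$, $G_1=1$, $G_2=A_3$ and $G_i=1$ for $i\geq 3$. The iterated wreath product is then $A_3^3\cong(\mathbb{Z}/3\mathbb{Z})^3$, which is its own abelianization, while $\prod_i G_i^{ab}\cong\mathbb{Z}/3\mathbb{Z}$.

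\textbf{Reading $G^{ab}$ as $G/\overline{[G,G]}$ is the right choice} for the paper's use, which concerns open normal subgroups. For the abstract quotient $G/[G,G]$ you would also need $[G,G]$ to be closed. General results such as Nikolov--Segal do not give this, because $G$ is not topologically finitely generated once infinitely many $G_i^{ab}$ are nontrivial.

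\textbf{A possible simplification of the naturality check.} Instead of building the isomorphism inductively and then verifying compatibility with truncation, you can write the map down directly on $G$. For $w\in G$ and each $i$, let the $i$-th coordinate be the product, over the vertices $v$ on level $i-1$, of the images in $G_i^{ab}$ of the local labels $w_v\in G_i$. Because $(ww')_v=w_{w'(v)}w'_v$ and $w'$ permutes level $i-1$, this is a continuous homomorphism $G\to\prod_i G_i^{ab}$. It is visibly compatible with every truncation, and it is clearly surjective on each $W_n$. Your semidirect-product computation is then needed only for the order count $|W_n^{ab}|=\prod_{i\leq n}|G_i^{ab}|$, which shows the induced map on $W_n^{ab}$ is an isomorphism. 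That count is also exactly where transitivity enters.
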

Now we can define infinite abelianization.
\begin{definition}[Infinite Abelianization] Let $G=[G_i]_{i=1}^\infty$ be an iterated wreath product. We say $G$ has \textbf{infinite abelianization} when $G_i^{ab}$ is nontrivial for infinitely many $i$.
\end{definition}
Notice if $G_f$ is isomorphic to an iterated wreath product with infinite abelianization, then it trivially has infinitely many open maximal normal subgroups. Of course the case considered by Dittmann and Kadets in \cite{DK} where $\prod_{i=1}^\infty \mathbb{Z}/2\mathbb{Z}\subset G_{f,0}^{ab}$ is one where we have infinite abelianization.\\

It is straightforward to verify that $f(x)=x^d-a$ for any $a$ in the ground field such that $f$ is not PCF has $G_f$ with infinite abelianization (over fields containing a primitive $d$-th root of unity). In particular, when $d=2$ this ensures that $G_f$ satisfies the condition described in Theorem \ref{main theorem} over any countable Hilbertian field.\\

Conversely, when $a=0$, making $f$ PCF, one can check that $G_f$ is topologically cyclic, meaning it has only one open maximal subgroup. It turns out that the conclusion of Theorem \ref{main theorem} is also false in this case. In fact, the index will be exactly 1 over any $a$ not a $d$-th power in the ground field (such an $a$ always exists in any Hilbertian field).\\

Benedetto et. al. describe the structure of the $G_f$ resulting from a a non-PCF quadratic rational function in \cite{BenedettoRational} and a non-PCF cubic polynomial in \cite{BenedettoCubic}. These $G_f$ are easily seen to have infinite abelianization. Notice Theorem \ref{main theorem} implies that $[G_f:G_{f,a}]=\infty$ over certain Hilbertian fields without assuming the $abc$-conjecture or the conjecture on irreducibility of iterates in \cite{JonesLevy}.

\section{Preliminaries}
We begin by stating a key fact by Fried and Jarden in 2023 (Theorem 21.12.2 in \cite{FJ}).
\begin{theorem}[Fried-Jarden, \cite{FJ}]\label{almost every field is PAC} Let $k$ be a countable Hilbertian field and $A$ be the set of $\sigma\in G_k$ such that $F^{\bar{\sigma}}$ is pseudo-algebraically closed. Then $\mu_k(A)=1$.
\end{theorem}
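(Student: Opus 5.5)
The plan is to prove the theorem directly, since PAC does not pass to subfields. For $\sigma\in G_k$, the field $F^{\bar{\sigma}}$ is the maximal Galois subextension of $k^{sep}/k$ on which $\sigma$ acts trivially. The reason is that a finite Galois $M/k$ lies in $F^{\bar{\sigma}}$ if and only if $\sigma|_M=\mathrm{id}$: the kernel of $G_k\to\mathrm{Gal}(M/k)$ is closed and normal, so it contains $\bar{\sigma}$ exactly when it contains $\sigma$. Hence $F^{\bar{\sigma}}\subseteq k^{sep}(\sigma)$, and the classical result that $k^{sep}(\sigma)$ is almost surely PAC cannot simply be quoted. Instead I will run a Borel--Cantelli argument over a countable family of test objects.

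\textbf{Reduction to curves.} A field $F$ is PAC if every absolutely irreducible plane curve $C:\ h(x,y)=0$ with $h\in F[x,y]$ has an $F$-rational point; it suffices to have infinitely many. Since $k$ is countable, there are only countably many pairs $(L,h)$ with $L/k$ finite Galois inside $k^{sep}$ and $h\in L[x,y]$ absolutely irreducible. A countable union of null sets is null, so it suffices to fix one pair $(L,h)$ and show the following. For $\mu_k$-almost every $\sigma$, either $L\not\subseteq F^{\bar{\sigma}}$, or $C$ has a point with coordinates in $F^{\bar{\sigma}}$. Every absolutely irreducible curve over $F^{\bar{\sigma}}$ arises from such a pair with $L\subseteq F^{\bar{\sigma}}$, since $h$ has finitely many coefficients, all lying in a finite Galois extension of $k$ that is contained in $F^{\bar{\sigma}}$.

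\textbf{Construction via Hilbertianity.} Put $H(x,y)=\prod_{\tau\in\mathrm{Gal}(L/k)}h^\tau(x,y)\in k[x,y]$. Let $E$ be the Galois closure over $k(x)$ of the splitting field of $H(x,Y)$, and put $G=\mathrm{Gal}(E/k(x))$. Because $k$ is Hilbertian, for any finite Galois $N/k$ there is $x_0\in k$ such that:
\begin{itemize}
\item specialization preserves the Galois group $G$, and
\item the specialized splitting field $M$ is linearly disjoint from $N$ over $k$.
\end{itemize}
This uses that Hilbert sets are stable under finite extension and can be intersected. Inductively, taking $N=LM_1\cdots M_{i-1}$ at each step, I get finite Galois extensions $M_1,M_2,\dots$ of $k$. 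Each has Galois group $G$ and contains $L$, where I arrange this by adjoining $L$ and keeping the degree $\le |G|\,[L:k]$. The $M_i$ are linearly disjoint over $L$, and each contains a root $y_i$ of $h(x_i,Y)$, hence a point $(x_i,y_i)\in C(M_i)$. Here absolute irreducibility of $h$ guarantees $h(x_i,Y)$ has roots, so $h$ itself (not merely a conjugate) picks up points in $M_i$.

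\textbf{Probability step.} Work in the conditional space $\{\sigma: \sigma|_L=\mathrm{id}\}$. The events $A_i=\{\sigma|_{M_i}=\mathrm{id}\}$ are independent there, because $\mathrm{Gal}(LM_1\cdots M_n/L)\cong\prod_i\mathrm{Gal}(M_i/L)$ and Haar measure pushes forward to uniform measure. Each $A_i$ has conditional probability $1/[M_i:L]\ge 1/|G|$. By Borel--Cantelli, almost surely infinitely many $A_i$ occur, and then $M_i\subseteq F^{\bar{\sigma}}$, so $(x_i,y_i)\in C(F^{\bar{\sigma}})$.

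The main obstacle is the Hilbertianity step. It must simultaneously keep the degrees $[M_i:k]$ uniformly bounded, so that the probabilities do not decay, make the $M_i$ Galois over $k$ rather than merely over $L$, and secure linear disjointness so that the events are independent. I would first try to handle all three at once by specializing the Galois closure over $k(x)$ of $H$, as above, invoking Hilbert sets inside a separable Hilbert set intersected with the complement of a thin set coming from $N$. The case of characteristic $p$ needs care only in choosing $h$ separable in $Y$, which can be arranged by a change of variables.
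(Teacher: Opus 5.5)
The paper does not prove this statement; it is quoted from Fried--Jarden, so your argument has to stand on its own. Its architecture is right: countably many pairs $(L,h)$, then Borel--Cantelli in $G_L$ for events $\{\sigma|_{M_i}=\mathrm{id}\}$, with $M_i/k$ Galois, containing $L$, of bounded degree, and linearly disjoint over $L$. The gap is in the Hilbertianity step, which cannot deliver that linear disjointness in general. Let $k'$ be the algebraic closure of $k$ in $E$. Every specialization of $E$ contains $k'$, so all your specialized fields contain $k'$. Your claim that $M$ can be taken linearly disjoint from an arbitrary finite Galois $N/k$ is therefore false as soon as $k'\cap N\neq k$; this happens at the latest at the second step, where $N\supseteq M_1\supseteq k'$. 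The property you actually use is that the $M_i$ are linearly disjoint over $L$. That forces $k'\subseteq L$, since otherwise every $M_i$ contains $Lk'\supsetneq L$. Without it, all $A_i$ lie in $G_{Lk'}$, so your construction covers conditional measure at most $1/[Lk':L]$, not $1$.

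A concrete case: $k=L=\mathbb{Q}$ and $h=y^3-x^4-1$, which is absolutely irreducible. Here $E=\mathbb{Q}(\omega,x,\sqrt[3]{x^4+1})$ with $\omega$ a primitive cube root of unity. Every $M_i$ contains $\mathbb{Q}(\omega)$, every $A_i$ lies in $G_{\mathbb{Q}(\omega)}$, and infinitely many $A_i$ occur with probability at most $1/2$. This is not just bookkeeping. Suppose $\sigma(\omega)\neq\omega$, and take a point $(x_0,y_0)$ from this projection, with $x_0\in\mathbb{Q}$ and $x_0^4+1$ not a cube. Then $(x_0,y_0)$ cannot lie over $F^{\bar\sigma}$: the Galois closure of $\mathbb{Q}(y_0)$ contains $\omega$, and $F^{\bar\sigma}/\mathbb{Q}$ is Galois. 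So on a set of positive measure the points must come from a different projection.

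The missing ingredient is a choice of projection whose Galois closure acquires no new constants. One way is to pass to the restriction of scalars $R_{L/k}C$, so everything is defined over $k$. Then choose a separating transcendence basis of its function field whose Galois closure is regular over $k$, typically with group $S_n$. This is the ``stability'' of function fields, which is the tool behind the Fried--Jarden result the paper cites. With a regular Galois closure, Hilbertianity gives Galois extensions $M_i/k$ with a fixed group, linearly disjoint from $L$ and from each other, with $C(LM_i)\neq\emptyset$. Your Borel--Cantelli step then goes through as written.
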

It should be emphasized that the analogous claim of Theorem \ref{almost every field is PAC} over an arbitrary Galois extension is false (see the introduction to Chapter 31 in \cite{FJ}). In contrast, Bary-Soroker and Fehm's 2013 result does not rely on this distinction.
\begin{theorem}[Bary-Soroker and Fehm, \cite{Bary}]\label{Bary}  Let $k$ be a countable Hilbertian field, $L/k$ Galois, and $B$ be the set of $\sigma\in \text{Gal}(L/k)$ such that $F^{\bar{\sigma}}$ is Hilbertian. Then $\mu_k(B)=1$.
\end{theorem}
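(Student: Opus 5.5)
The plan is to reduce the statement to a purely group-theoretic property of the closed normal subgroup $N=\bar{\sigma}$ of $G:=\text{Gal}(L/k)$, via Haran's diamond theorem. In the form I would use, it says: if $k$ is Hilbertian, $M_1,M_2$ are Galois extensions of $k$, and $M$ is an intermediate field of $M_1M_2/k$ with $M\not\subseteq M_1$ and $M\not\subseteq M_2$, then $M$ is Hilbertian. I would also keep in reserve the standard facts that finite extensions of Hilbertian fields are Hilbertian and that abelian extensions of Hilbertian fields are Hilbertian (Kuyk).

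Since $k$ is countable, $L$ is countable. Hence $G$ has only countably many open normal subgroups $U_1,U_2,\dots$, and the condition ``$\sigma\in U$'' is clopen. Write $M_\sigma:=L^{\bar{\sigma}}$. The first step is to dispose of the degenerate cases.
\begin{itemize}
\item If $G$ is finite, or more generally if $M_\sigma/k$ is finite for $\sigma$ in a set of positive measure, then $M_\sigma$ is Hilbertian by Serre's result quoted in the introduction.
\item So the interesting case is $[G:\bar{\sigma}]=\infty$. Here I would try to show that, for $\mu$-almost every $\sigma$, $M_\sigma$ sits diagonally inside a compositum of two Galois extensions.
\end{itemize}

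Concretely, I would look for two closed normal subgroups $N_1,N_2\trianglelefteq G$, with fixed fields $M_1=L^{N_1}$ and $M_2=L^{N_2}$, such that the following hold:
\begin{align*}
N_1\cap N_2&\le \bar{\sigma}, & \bar{\sigma}\not\supseteq N_1, && \bar{\sigma}\not\supseteq N_2.
\end{align*}
These are exactly the hypotheses $M_\sigma\subseteq M_1M_2$, $M_\sigma\not\subseteq M_1$, $M_\sigma\not\subseteq M_2$ of the diamond theorem. The natural candidates come from decomposing $\sigma$ itself. Choose a descending chain of open normal subgroups $U_n$ with trivial intersection, and split $G$ as a fibre product of two quotients so that the images of $\sigma$ in each factor are ``generic''. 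Since Haar measure on a product is the product measure, this is where a probabilistic argument enters. For a fixed finite quotient $G/U$, the probability that the image of $\sigma$ generates the whole quotient normally (i.e.\ that $\bar{\sigma}U=G$) is bounded away from $1$ whenever $G/U$ surjects onto a nontrivial abelian group. Sums over the countably many $U_n$, together with a Borel--Cantelli argument, should then show the following for almost every $\sigma$: there are infinitely many independent finite quotients in which $\sigma$ has nontrivial but non-generating image. This yields the two ``sides'' $N_1,N_2$ of the diamond.

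The main obstacle is exactly this construction of the diamond for almost all $\sigma$, uniformly over an arbitrary Galois $L/k$. The group $G$ might have very few normal subgroups; for instance, it might be a product of nonabelian simple groups, where normal closures of single elements tend to be everything. In that case $M_\sigma$ will often be very small, possibly $k$ itself or a finite extension of it, and one must instead check that those cases are Hilbertian directly. So I would split the argument according to whether $G/\bar{\sigma}$ is finite (done by Serre) or infinite. In the infinite case, I would first try to extract from the infinitely many open normal subgroups not containing $\bar{\sigma}$ two ``independent'' ones, using the countability of the family and the product structure of Haar measure on $G/(U_i\cap U_j)$. Only if that fails would I fall back on a weaker Hilbertianity criterion, such as Weissauer's theorem applied to a suitable finite proper extension of a Galois subextension of $M_\sigma$.
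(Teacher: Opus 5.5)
The paper gives no proof of this statement. It is quoted from Bary-Soroker and Fehm \cite{Bary}, with a pointer to Theorem 31.3.2 of \cite{FJ}, so your proposal has to stand on its own.

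It is a plan rather than a proof, and the step carrying all the weight is missing. For almost every $\sigma$ with $[G:\bar{\sigma}]=\infty$, you never produce closed normal $N_1,N_2\trianglelefteq G$ with $N_1\cap N_2\le\bar{\sigma}$ and $N_1,N_2\not\le\bar{\sigma}$. The Borel--Cantelli mechanism you sketch cannot produce them, because it points the wrong way. Suppose you know that $\sigma$ lies in infinitely many independent open normal subgroups, or has nontrivial non-normally-generating image in many $G/U$. That only bounds $M_\sigma=L^{\bar{\sigma}}$ from \emph{below}: $M_\sigma$ contains many linearly disjoint finite Galois extensions of $k$. The diamond theorem needs the \emph{upper} bound $M_\sigma\subseteq M_1M_2$, i.e.\ $N_1\cap N_2\le\bar{\sigma}$.

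More seriously, the dichotomy \emph{either $M_\sigma/k$ is finite or $M_\sigma$ lies in a diamond over $k$} fails in general, so no refinement of the plan can work.

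\textbf{Necessary condition for a diamond.} Let $M/k$ be Galois with $M\subseteq M_1M_2$, where $M_1,M_2$ are Galois over $k$ and $M\not\subseteq M_1$, $M\not\subseteq M_2$. Then the images in $\Gal(M/k)$ of $\Gal(M_1M_2/M_1)$ and $\Gal(M_1M_2/M_2)$ are nontrivial closed normal subgroups that centralize each other, because the two groups are normal and meet trivially.

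\textbf{The free profinite group $\hat{F}_\omega$ has no such pair.} Lifting to wreath products $S\wr H$ shows that every nontrivial closed normal subgroup of $\hat{F}_\omega$ is nonabelian and has order divisible by every prime. A projective group contains no $\mathbb{Z}_p\times\mathbb{Z}_p$, which rules out two commuting nontrivial closed normal subgroups.

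\textbf{A field where this happens almost surely.} Let $k$ be a countable Hilbertian PAC field, e.g.\ $\mathbb{Q}_{\mathrm{tr}}(\sqrt{-1})$ (PAC by Pop, Hilbertian by Weissauer). Then $G_k\cong\hat{F}_\omega$ by Fried--V\"olklein and Iwasawa. Take $L=k^{sep}$.
\begin{itemize}
\item There are countably many finite embedding problems $(\alpha\colon G_k\twoheadrightarrow A,\ \beta\colon B\twoheadrightarrow A)$, and each has infinitely many jointly independent solutions $\gamma_i$.
\item Conditionally on $\alpha(\sigma)=1$, the events $\gamma_i(\sigma)=1$ are independent, each of probability $1/|\ker\beta|$.
\item Hence almost surely every finite embedding problem for $G_k/\bar{\sigma}$ is solvable, and Iwasawa's criterion gives $G_k/\bar{\sigma}\cong\hat{F}_\omega$.
\end{itemize}
For such $\sigma$, $M_\sigma/k$ is infinite and $M_\sigma$ lies in no diamond over $k$. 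Your Weissauer fallback fails too. Writing $M_\sigma$ as a finite proper extension of some $N\subseteq M_\sigma$ Galois over $k$ would give a nontrivial finite normal subgroup of the torsion-free group $\Gal(M_\sigma/k)$.

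Yet the statement says these $M_\sigma$ are Hilbertian. So some ingredient beyond finite extensions, Kuyk, Weissauer and the diamond theorem over $k$ has to enter the proof, and your proposal does not identify one.
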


We make use of Kuyk's Corollary of Haran's Diamond Theorem in 1970.
\begin{theorem}\label{Abelian Extensions} (Kuyk, \cite{AbelianExt}) Let $k$ be a Hilbertian field and $L/k$ an Abelian extension. Then $L$ is Hilbertian. 
\end{theorem}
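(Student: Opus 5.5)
The plan is to deduce the statement from Haran's Diamond Theorem, which is where Kuyk's result comes from. Recall the form we will use: if $K$ is Hilbertian, $M_1,M_2$ are Galois extensions of $K$, and $M$ is an intermediate field $K\subseteq M\subseteq M_1M_2$ with $M\not\subseteq M_1$ and $M\not\subseteq M_2$, then $M$ is Hilbertian. We will also use two standard facts from \cite{FJ}. First, finite separable extensions of Hilbertian fields are Hilbertian; this is Serre's result quoted above. Second, the descent statement: if $L'/L$ is a finite separable extension and $L'$ is Hilbertian, then $L$ is Hilbertian.

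\textbf{Finite case.} If $[L:k]<\infty$, then $L/k$ is a finite separable extension of a Hilbertian field, so $L$ is Hilbertian by Serre's result. From now on assume $L/k$ is infinite. Note that applying the Diamond Theorem directly with $M=L$ is impossible, because $L$ is itself Galois over $k$ and so lies in whichever $M_i$ we would naturally take. The idea is to enlarge $L$ by a finite amount, apply the Diamond Theorem to the enlarged field, and then descend back to $L$.

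\textbf{The diamond.} Since $k$ is Hilbertian, it admits Galois extensions with any prescribed symmetric group. Choose a finite Galois extension $E/k$ with $\mathrm{Gal}(E/k)\cong S_3$, or any nonabelian finite group. Set $M_1=L$, $M_2=E$ and $M=LE=M_1M_2$, and check the two non-containment conditions.
\begin{itemize}
\item $M\not\subseteq L$: otherwise $E\subseteq L$, so $E/k$ would be a subextension of an abelian extension and would have abelian Galois group, which it does not.
\item $M\not\subseteq E$: otherwise $L\subseteq E$ would be finite over $k$, contrary to assumption.
\end{itemize}
The Diamond Theorem therefore gives that $LE$ is Hilbertian.

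\textbf{Descent.} The extension $LE/L$ is finite, of degree at most $[E:k]$, and separable, being generated by the separable field $E$. Since $LE$ is Hilbertian, the descent statement gives that $L$ is Hilbertian.

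\textbf{Main obstacle.} The real content is in the two imported results, the Diamond Theorem and descent from a finite separable extension. Both are standard in \cite{FJ}, so the proof reduces to choosing the auxiliary field $E$ correctly. A first attempt might take $E$ cyclic of prime degree, but that can fail: if $L$ is the maximal abelian extension of $k$, then $L$ contains every such $E$. Taking $E$ with nonabelian Galois group removes this problem uniformly, whatever $L$ is.
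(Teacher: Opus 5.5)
The paper gives no proof of this statement; it quotes it from Kuyk \cite{AbelianExt}. So your proposal has to stand on its own, and it has a genuine gap: the descent step. Your diamond step is correct, and $LE$ is indeed Hilbertian. But the claim ``if $L'/L$ is finite separable and $L'$ is Hilbertian, then $L$ is Hilbertian'' is false, and it is not a result in \cite{FJ}.

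The standard counterexample is the field $\mathbb{Q}_{\mathrm{tr}}$ of totally real algebraic numbers, which is Galois over $\mathbb{Q}$.
\begin{itemize}
\item It is not Hilbertian. The polynomial $X^2-(T^2+1)$ is irreducible over $\mathbb{Q}_{\mathrm{tr}}(T)$. For every $a\in\mathbb{Q}_{\mathrm{tr}}$, all conjugates of $a^2+1$ are positive, so $\sqrt{a^2+1}$ is totally real and $X^2-(a^2+1)$ splits over $\mathbb{Q}_{\mathrm{tr}}$.
\item Yet $\mathbb{Q}_{\mathrm{tr}}(\sqrt{-1})$ is Hilbertian. This follows from exactly your diamond argument with $M_1=\mathbb{Q}_{\mathrm{tr}}$ and $M_2=\mathbb{Q}(\sqrt{-1})$; it is the usual illustration of Weissauer's theorem.
\end{itemize}
Your argument uses the abelian hypothesis only to produce a finite Galois $E\not\subseteq L$. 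Run verbatim with $k=\mathbb{Q}$, $L=\mathbb{Q}_{\mathrm{tr}}$ and $E$ the splitting field of $x^3-2$, it would prove that $\mathbb{Q}_{\mathrm{tr}}$ is Hilbertian. So abelianness must be used to put $L$ itself, not a finite extension of it, into a diamond.

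Contrary to your remark, that is usually possible. Every closed subgroup of $A=\mathrm{Gal}(L/k)$ is normal. So if $A$ has nontrivial closed subgroups $B_1,B_2$ with $B_1\cap B_2=1$, the fixed fields $L_i=L^{B_i}$ are Galois over $k$, $L=L_1L_2$, and $L\not\subseteq L_i$. The Diamond Theorem then applies with $M=L$.

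The only nontrivial profinite abelian groups with no such pair are the finite cyclic $p$-groups, covered by your finite case, and $\mathbb{Z}_p$. The case $A\cong\mathbb{Z}_p$ needs a separate argument, using that $A$ has only finitely many open subgroups of each index:
\begin{enumerate}
\item Take $f(T,X)$ of degree $n$ in $X$, irreducible over $L(T)$, with coefficients in a finite subextension $K'$.
\item Let $E\subseteq L$ be the compositum of the finitely many Galois subextensions of $L/K'$ of degree at most $n!$.
\item Choose $a\in K'$ with $f(a,X)$ separable and irreducible over $E$. This is possible because $K'$ is Hilbertian and $E/K'$ is finite separable.
\item The splitting field $\Omega_a$ of $f(a,X)$ over $K'$ satisfies $\Omega_a\cap L\subseteq E$. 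Hence $\mathrm{Gal}(\Omega_a/\Omega_a\cap L)\supseteq\mathrm{Gal}(\Omega_a/\Omega_a\cap E)$, which acts transitively on the roots, so $f(a,X)$ stays irreducible over $L$.
\end{enumerate}
The same works for finitely many polynomials at once.
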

Since all cyclotomic extensions are Abelian, we can replace any given Hilbertian field $k$ with $k$ without losing the Hilbertian property by Theorem \ref{Abelian Extensions}. Note $k$ is countable whenever $k$ is such. Letting $L$ be the separable closure of $k$ in Theorem \ref{Bary}, we obtain part of Theorem \ref{main theorem} as a Corollary of Theorems \ref{Bary} and \ref{Abelian Extensions}.\\

Next we state a very useful result by Michael Fried in 1973.
\begin{theorem}\label{Roots of unity}(Fried, \cite{Fried}) Given a field $k$ of characteristic $p\geq 0$ and $f(x)/k$ a polynomial of degree $d$ such that $p\nmid d$, we have
\[f^{-1}(t)\cap \bar{k}\subset k(\zeta(d)).\]
\end{theorem}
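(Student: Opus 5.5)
The plan is to work locally at the place $t=\infty$ of $k(t)$. I will show that the whole splitting field $\Omega$ of $f(x)-t$ over $k(t)$ embeds into a Laurent series field $k(\zeta(d))((w))$. Any element of $\Omega$ that is algebraic over $k$ is then an element of $k(\zeta(d))((w))$ algebraic over $k$, and such elements must already lie in $k(\zeta(d))$. I read the statement $f^{-1}(t)\cap\bar k\subset k(\zeta(d))$ as: the constant field $\Omega\cap\bar k$ (algebraic closure of $k$ in $k(f^{-1}(t))$, taken inside a fixed algebraic closure of $k(t)$) is contained in $k(\zeta(d))$.

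\textbf{Step 1 (a uniformizer at infinity).} Write $f(x)=a_dx^d+a_{d-1}x^{d-1}+\cdots+a_0$ with $a_d\neq 0$. Let $w$ be a new indeterminate and embed $k(t)\hookrightarrow k((w))$ by $t\mapsto a_d w^{-d}$. This is a field embedding since $t$ is transcendental. Substitute $x=y/w$ into $f(x)=t$ and multiply by $w^d/a_d$. This gives
\[
y^d+\tfrac{a_{d-1}}{a_d}\,y^{d-1}w+\cdots+\tfrac{a_0}{a_d}\,w^d \;=\;1 .
\]
This is a monic polynomial equation $P(y,w)=0$ with coefficients in $k[w]$. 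Its reduction modulo $w$ is $y^d-1$.

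\textbf{Step 2 (Hensel's lemma).} Since $\text{char}(k)=p\nmid d$, the polynomial $y^d-1$ is separable. Over $k(\zeta(d))$ it splits into $d$ distinct simple roots $\zeta^i$, where $\zeta$ is a primitive $d$-th root of unity. Hensel's lemma in the complete ring $k(\zeta(d))[[w]]$ lifts each simple root to a root $y_i=\zeta^i+O(w)$ of $P(y,w)$. These give $d$ distinct roots $x_i=y_i/w\in k(\zeta(d))((w))$ of $f(x)=t$. That is all of $f^{-1}(t)$, so the embedding of $k(t)$ extends to an embedding $\Omega=k(t)(f^{-1}(t))\hookrightarrow k(\zeta(d))((w))$ over $k$.

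\textbf{Step 3 (constants).} Put $L=k(\zeta(d))$. I claim every element of $L((w))$ that is algebraic over $L$ already lies in $L$. Let $\alpha=\sum_{n\ge n_0}c_nw^n$ with $\alpha\notin L$, and suppose $\alpha$ is a root of a monic polynomial over $L$ of minimal degree. Subtracting the constant term $c_0$ and noting $L((w))$ is a field, we may assume the leading nonzero term $c_mw^m$ has $m\neq 0$. A valuation comparison then gives a contradiction. Since the $w$-adic valuation is trivial on $L^\times$, the terms $\alpha^j$ of an algebraic relation $\sum b_j\alpha^j=0$ (with $b_j\in L$) have pairwise distinct valuations $jm$. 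Hence the relation cannot hold.

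Therefore any $\beta\in\Omega\cap\bar k$, viewed inside $L((w))$, is algebraic over $k\subset L$ and so lies in $L=k(\zeta(d))$. Since the embedding is over $k$, this shows the constant field of $\Omega$ is contained in $k(\zeta(d))$, as claimed.

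\textbf{Main obstacle.} The delicate point is Step 2 together with the identification in Step 3. One must make sure the embedding really is a $k$-embedding of the abstract splitting field, so that "constants" inside $\Omega$ are correctly matched with elements of $k(\zeta(d))$. One must also note that the leading coefficient $a_d$ causes no extra radical $a_d^{1/d}$. This is exactly why $t\mapsto a_dw^{-d}$ is chosen rather than $t\mapsto w^{-d}$. The hypothesis $p\nmid d$ enters only through the separability of $y^d-1$, which is what allows Hensel's lemma to be applied.
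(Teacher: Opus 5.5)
The paper gives no proof of this statement. It is quoted from Fried and used only as a black box, through Corollary~\ref{cor roots of unity}, by applying it to the iterates $f^n$, whose degrees $d^n$ are still prime to $p$. So your argument supplies what the paper omits, and it is correct. It is the classical local argument at $t=\infty$, the place that is totally and tamely ramified in $k(x)/k(f(x))$: the splitting field embeds into $k(\zeta(d))((w))$, and this forces its constants into $k(\zeta(d))$.

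Your reading of the statement as a claim about the constant field $k(f^{-1}(t))\cap\bar k$ is the right one. Taken literally, $f^{-1}(t)\cap\bar k$ is empty, since every root of $f(x)=t$ is transcendental over $k$. The constant-field version is also the form the paper actually uses.

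Your normalization $t\mapsto a_dw^{-d}$ is what keeps $a_d^{1/d}$ out of the picture. Your proof also makes explicit that $p\nmid d$ enters only through the separability of $y^d-1$ in the Hensel step.

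One step needs a sentence you did not write. The embedding of the splitting field $\Omega$ into $k(\zeta(d))((w))$ only shows that $\Omega\cap\bar k$ is $k$-isomorphic to a subfield of $k(\zeta(d))$. To get actual containment inside the fixed $\bar k$, use that $k(\zeta(d))/k$ is normal. If $\beta\in\Omega\cap\bar k$ has minimal polynomial $g$ over $k$, then its image is a root of $g$ lying in $k(\zeta(d))$. By normality every root of $g$ lies in $k(\zeta(d))$, and in particular $\beta$ does.
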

One easily obtains the following.
\begin{corollary}[Corollary to Theorem \ref{Roots of unity}]\label{cor roots of unity}
Let $k$ be a field of characteristic $p\geq 0$ and $f(x)/k$ a polynomial of degree $d$ such that $p\nmid d$. Suppose $k=k(\zeta(d^\infty))$. Then 
\[{f^{-\infty}(t)\cap \overline{k}\subset k}.\]
\end{corollary}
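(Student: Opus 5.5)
We argue by induction on $n$, with the aim of showing $f^{-n}(t)\cap \overline{k}\subset k$ for every $n\geq 1$; taking the union over $n$ then gives the claim. Throughout, $f^{-n}(t)$ consists of roots of $f^n(x)-t$ in a fixed algebraic closure of $k(t)$, and $\overline{k}$ sits inside it. The key input is Theorem \ref{Roots of unity}. The plan is to apply it not only to $f$ but also to the $n$-th iterate $f^n$, which again has degree $d^n$.

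We first check that $f^n$ satisfies the hypotheses of Theorem \ref{Roots of unity}. It is a polynomial over $k$ of degree $d^n$, and since $p\nmid d$ we also have $p\nmid d^n$. Theorem \ref{Roots of unity} therefore gives
\[(f^n)^{-1}(t)\cap \overline{k}\subset k(\zeta(d^n)).\]
By definition $(f^n)^{-1}(t)=f^{-n}(t)$, since both are the roots of $f^n(x)-t$. Moreover $\zeta(d^n)\subset \zeta(d^\infty)\subset k$ by the hypothesis $k=k(\zeta(d^\infty))$, so $k(\zeta(d^n))=k$. Hence $f^{-n}(t)\cap\overline{k}\subset k$ for every $n$, and then
\[f^{-\infty}(t)\cap \overline{k}=\bigcup_{n\geq 1}\bigl(f^{-n}(t)\cap\overline{k}\bigr)\subset k.\]
With this route the induction is not actually needed.

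If one prefers to use Theorem \ref{Roots of unity} only for $f$ itself, there is an alternative induction. Take $\alpha\in f^{-(n+1)}(t)\cap\overline{k}$. Then $f(\alpha)\in f^{-n}(t)$ and $f(\alpha)\in\overline{k}$, so $f(\alpha)\in k$ by the inductive hypothesis. This puts $\alpha$ in the fibre of $f$ over the constant $f(\alpha)$. However, Theorem \ref{Roots of unity} is stated for the transcendental $t$, so it would have to be reinterpreted for this fibre, and that step is delicate. The direct application to $f^n$ avoids the issue.

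The main point to check is therefore that Theorem \ref{Roots of unity} may legitimately be applied to $f^n$. This holds because it is stated for an arbitrary polynomial of degree prime to $p$. The only other ingredient is the identification $k(\zeta(d^n))=k$, which follows from the definition of $\zeta(d^\infty)$.
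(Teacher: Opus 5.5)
Your proof is correct and is exactly the derivation the paper leaves implicit when it says ``one easily obtains'' the corollary: apply Theorem \ref{Roots of unity} to the iterate $f^n$, whose degree $d^n$ is prime to $p$, and use $\zeta(d^n)\subset k$. One remark on reading the statement: taken literally, $f^{-n}(t)\cap\overline{k}$ is empty, since $\alpha\in\overline{k}$ with $f^n(\alpha)=t$ would make $t$ algebraic over $k$. This also settles your ``delicate'' alternative induction immediately. The substantive content is therefore the constant-field statement $k(f^{-\infty}(t))\cap\overline{k}\subset k$, and your argument proves that form unchanged, because $k(f^{-\infty}(t))$ is the increasing union of the fields $k(f^{-n}(t))$.
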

Applying this to Definition \ref{Galois group notation} gives us that $G_f$ over $k=k(\zeta(d^\infty))$ is isomorphic to $G_f$ over $\bar{k}$ (as no preimage of $t$ under $f(x)$ appears in the fixed field $\bar{k}(t)$ of the latter, unless it also appears in the fixed field $k(t)$ of the former).\\

This gives us a simple yet successful strategy for proving  Theorem \ref{main theorem}  for a given polynomial $f(x)/F^{\bar{\sigma}}$ satisfying the relevant assumptions whenever $k=k(\zeta^\infty)$. Let $a\in F^{\bar{\sigma}}$ and $L/k$ be a finite subextension of $F^{\bar{\sigma}}/k$ containing $a$ and the coefficients of $f(x)$. By Corollary \ref{cor roots of unity}, we know $G_f$ remains unchanged when working over either $L$ or $F^{\bar{\sigma}}$. By basic Galois theory, we know $G_{f,a}$ over $F^{\bar{\sigma}}$ can be identified with a subgroup of $G_{f,a}$ over $L$. Hence $[G_f: G_{f,a}]$ over $L$ is smaller than $[G_f: G_{f,a}]$ over $F^{\bar{\sigma}}$.
Clearly if the smaller index over $L$ is already infinite, then we are done. If not, the idea behind the proof is constructing $F^{\bar{\sigma}}$ so that $G_{f,a}$ over $F^{\bar{\sigma}}$ is ``much smaller'' than $G_{f, a}$ over $L$. This will ensure that the index over $F^{\bar{\sigma}}$ is ``infinitely larger'' than the index over $L$.\\

More precisely, given a field $F$, we use a form of the Borel-Cantelli Theorem to deduce that almost surely a randomly chosen $\sigma\in G_F$ will give a fixed field $F^{\bar{\sigma}}$ that contains infinitely many of a given sequence of minimal Galois extensions of $F$. This can be generalized to a countable list of finite extensions $F_i/k(\zeta^\infty)$ to state that with $\mu_{k}$-probability 1, for each $F_i$ we have either $F_i\not\subset F^{\bar{\sigma}}$ or $F^{\bar{\sigma}}$ contains infinitely many minimal Galois extensions of $F_i$. We apply this result to the sequence of all such finite extensions of $k$ with minimal Galois extensions arising from polynomials having $G_f$ satisfying the conditions in Theorem \ref{main theorem} and $[G_f: G_{f,a}]<\infty$ over $F_i$ for some $a\in F_i$. The Galois correspondence then easily shows that $[G_f:G_{f,a}]=\infty$ over $F^{\bar{\sigma}}$, finishing the proof. The proof when working over an arbitrary countable Hilbertian field follows essentially the same strategy, except that we require extra propositions to deal with the complications arising from changes in $G_f$ when adding or removing roots of unity to the given field.

\section{Derivation of Our Theorem}
The essence of the proof of Theorem \ref{main theorem} lies in a probabilistic result regarding minimal Galois extensions of Hilbertian fields contained in certain fixed fields. In order to derive this result, we will need a generalized form of the Borel-Cantelli Theorem. Before stating it, we provide important background on how Haar measure relates to linearly disjoint field extensions. First recall that given a sequence of distinct measurable subsets $\{N_j\}_{j=1}^\infty$ of a probability space $G$, we say the $N_j$ are pairwise $\mu$-independent when $\mu(N_j\cap N_{\ell})=\mu(N_j)\mu(N_{\ell})$ for all $1\leq j< \ell <\infty$. Similarly, we say the $N_\ell$ are mutually $\mu$-independent when $\mu(\bigcap_{j\in J}N_j)=\prod_{j\in J} \mu(N_j)$ for any finite subset of indices $J$. It should be realized that pairwise $\mu$-independence is generally a weaker condition than mutual $\mu$-independence. The definitions of being $\mu$-independent and linearly disjoint are very closely connected by the Galois correspondence when $G$ is a Galois group.
\begin{lemma}[Being $\mu$-Independent and Linearly Disjoint]\label{indep=dis} 
Consider two  open subgroups $H$ and $J$ of $G_L$ for some field $L$. Denote the corresponding finite field extensions by $L_H$ and $L_J$. $H$ and $J$ are $\mu$-independent if and only if the $L_H/L$ and $L_J/L$ field extensions are  linearly disjoint.  
\end{lemma}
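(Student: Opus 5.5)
The plan is to turn both conditions into the same statement about degrees, using the Galois correspondence and the fact that the normalized Haar measure of an open subgroup is the reciprocal of its index.

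First, I will compute the measures. Since $H$ is open in the compact group $G_L$, it has finite index, and $G_L$ is the disjoint union of $[G_L:H]$ cosets of $H$. By translation invariance and the normalization $\mu_L(G_L)=1$, this gives $\mu_L(H)=1/[G_L:H]$. The same holds for $J$ and for $H\cap J$, which is again open. The infinite Galois correspondence for $L^{sep}/L$ then identifies each index with a degree:
\begin{itemize}
\item $[G_L:H]=[L_H:L]$ and $[G_L:J]=[L_J:L]$;
\item $H\cap J$ is the subgroup fixing the compositum $L_HL_J$, because an automorphism fixes $L_HL_J$ exactly when it fixes both $L_H$ and $L_J$, so $[G_L:H\cap J]=[L_HL_J:L]$.
\end{itemize}
Hence $\mu$-independence of $H$ and $J$ is equivalent to
\[[L_HL_J:L]=[L_H:L]\,[L_J:L].\]

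Next, I will show this degree identity is equivalent to linear disjointness. Consider the natural $L$-linear map $\varphi:L_H\otimes_L L_J\to L_HL_J$, $x\otimes y\mapsto xy$. Because $L_H$ and $L_J$ are finite (hence algebraic) over $L$, the ring $L_H[L_J]$ is already a field. Therefore $L_HL_J=L_H[L_J]$ is spanned over $L$ by products $xy$, so $\varphi$ is surjective.

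The domain of $\varphi$ has $L$-dimension $[L_H:L][L_J:L]$. Linear disjointness of $L_H$ and $L_J$ over $L$ means, by definition, that $\varphi$ is injective; equivalently, an $L$-basis of $L_H$ stays linearly independent over $L_J$. A surjective linear map between finite-dimensional spaces is injective exactly when the two dimensions agree. So linear disjointness holds if and only if $[L_HL_J:L]=[L_H:L][L_J:L]$, which is the $\mu$-independence condition from the first step.

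No step is a serious obstacle. The only points needing care are that $\mu_L(H\cap J)$ is read as the measure of the subgroup fixing the compositum, and that the compositum of two finite extensions equals the ring they generate. If the paper's convention for linear disjointness is the basis formulation instead of injectivity of $\varphi$, I would add the standard one-line argument that the two formulations are equivalent.
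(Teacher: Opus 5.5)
Your proof is correct and follows the same route as the paper: both write the measures as reciprocals of degrees, $\mu_L(H\cap J)=[L_HL_J:L]^{-1}$ and $\mu_L(H)\mu_L(J)=[L_H:L]^{-1}[L_J:L]^{-1}$, and reduce to the degree criterion for linear disjointness. The paper only asserts that last equivalence as ``clear,'' whereas you prove it through the surjective multiplication map $L_H\otimes_L L_J\to L_HL_J$, which is a worthwhile addition.
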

\begin{proof}
By the definition of Haar measure, 
\begin{equation}\label{eqmsureofint}
\mu_L(H\cap J)=[L_HL_J:L]^{-1}
\end{equation}
and
\begin{equation}\label{eqprodofmsure}
\mu(H)\mu(J)=[L_H:L]^{-1}[L_J:L]^{-1}.
\end{equation}
Clearly the right sides of Equations \eqref{eqmsureofint} and \eqref{eqprodofmsure} are equal precisely when the field extensions $L_H/L$ and $L_J/L$ are linearly disjoint. 
\end{proof}
With Lemma \ref{indep=dis} in mind, we can now appreciate the following example.
\begin{example}[Pairwise vs Mutual $\mu$-Independence]\label{exindep}
For mutually distinct primes $p$, $q$, and $s$, consider the field extensions $\mathbb{Q}(\sqrt{pq})/\mathbb{Q}$, $\mathbb{Q}(\sqrt{ps})/\mathbb{Q}$, and $\mathbb{Q}(\sqrt{q/s})/\mathbb{Q}$. Note this sequence of field extensions is pairwise linearly disjoint  and each of these extensions has no non-trivial subextensions due to their prime degree). This sequence is not mutually linearly disjoint, however, because
\[[\mathbb{Q}(\sqrt{pq}, \sqrt[r]{ps}, \sqrt{q/s}):\mathbb{Q}]=r^{2}\neq r^{3}=[\mathbb{Q}(\sqrt{pq}):\mathbb{Q}][\mathbb{Q}(\sqrt{ps}):\mathbb{Q}][\mathbb{Q}(\sqrt{q/s}):\mathbb{Q}].\]
Letting $H_{pq}, H_{ps}, H_{qs}$ be the corresponding subgroups in $G_{\mathbb{Q}}$, we see that these subgroups are pairwise $\mu_{\mathbb{Q}}$-independent but not mutually $\mu_{\mathbb{Q}}$-independent by Lemma \ref{indep=dis}.
\end{example}
The classical form of the Borel-Cantelli Theorem requires a collection of mutually $\mu$-independent subsets. We instead give a stronger form of the Borel-Cantelli Theorem that applies the same conclusion to a collection of merely pairwise $\mu$-independent subsets. This version is often considered to be a corollary to the Kochen and Stone Inequality in \cite{BCOriginal}. For an explicit statement and proof of the version given below, one should consult Lemma 1.1 in \cite{BCPairwiseIndependent}.
\begin{theorem}[Borel-Cantelli]\label{BC} Let $\{N_{j}\}_{j=1}^\infty$ be a sequence of measurable subsets of a probability space $G$ that are pairwise $\mu$-independent and satisfy $\sum_{j=1}^\infty \mu(N_{j})=\infty$. Put
\[S:=\bigcap_{n=1}^\infty \bigcup_{j=n}^\infty N_{j}.\]
Note $S$ is the set of elements of $G$ that belong to infinitely many $N_{j}$. We have $\mu(S)=1$.
\end{theorem}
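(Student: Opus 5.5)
The plan is the standard second-moment argument (Chebyshev/Paley--Zygmund style, as in Kochen--Stone), applied to partial counts. For $n\le m$ set
\[X_{n,m}:=\sum_{j=n}^{m}\mathbbm{1}_{N_j}.\]
It suffices to show that for each fixed $n$, $\mu\bigl(\bigcup_{j\ge n}N_j\bigr)=1$. Indeed, $S$ is the decreasing intersection over $n$ of these unions, so continuity of measure then gives $\mu(S)=1$. So I fix $n$ and put $p_j:=\mu(N_j)$ and $E_m:=\mathbb{E}[X_{n,m}]=\sum_{j=n}^m p_j$. Since the full series diverges and only finitely many terms are dropped, $E_m\to\infty$ as $m\to\infty$.

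First I compute the variance using only pairwise independence. Expanding the square gives
\[\mathbb{E}[X_{n,m}^2]=\sum_{j}p_j+\sum_{j\ne \ell}\mu(N_j\cap N_\ell)=\sum_j p_j+\sum_{j\neq\ell}p_jp_\ell,\]
so $\mathrm{Var}(X_{n,m})=\sum_j (p_j-p_j^2)\le E_m$. This is exactly where the pairwise hypothesis is used: the cross terms are the only place independence enters, and they only involve two sets at a time.

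Next I apply Chebyshev's inequality:
\[\mu\bigl(X_{n,m}=0\bigr)\le \mu\bigl(|X_{n,m}-E_m|\ge E_m\bigr)\le \frac{\mathrm{Var}(X_{n,m})}{E_m^2}\le \frac{1}{E_m}.\]
The event $\{X_{n,m}=0\}$ is precisely $G\setminus\bigcup_{j=n}^m N_j$. These complements decrease in $m$, and their measures tend to $0$ because $E_m\to\infty$. Hence $\mu\bigl(\bigcup_{j\ge n}N_j\bigr)=1$, which completes the argument.

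I expect no serious obstacle. The only delicate point is making sure the variance bound uses just pairwise products. One should also check that the sets need not be distinct or nonnull: if some $p_j=0$, they simply contribute nothing to either side of the estimates. The phrase ``pairwise $\mu$-independent'' is understood as above, $\mu(N_j\cap N_\ell)=\mu(N_j)\mu(N_\ell)$ for all $j<\ell$, which is exactly the cross-term identity used in the expansion.
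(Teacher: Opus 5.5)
Your argument is correct. The variance identity $\mathrm{Var}(X_{n,m})=\sum_{j=n}^{m}(p_j-p_j^2)$ uses only the pairwise hypothesis. Chebyshev then gives $\mu\bigl(G\setminus\bigcup_{j=n}^{m}N_j\bigr)\le 1/E_m\to 0$, and intersecting over $n$ finishes the proof.

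The paper gives no proof of its own. It cites Lemma 1.1 of \cite{BCPairwiseIndependent} and describes the statement as a corollary of the Kochen--Stone inequality,
\[\mu(S)\ \ge\ \limsup_{m\to\infty}\frac{\bigl(\sum_{j\le m}\mu(N_j)\bigr)^2}{\sum_{j,\ell\le m}\mu(N_j\cap N_\ell)}.\]
Under pairwise independence the right side tends to $1$, by exactly your computation of $\mathbb{E}[X_{n,m}^2]$.

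Both routes are second-moment arguments; Kochen--Stone is essentially Cauchy--Schwarz (Paley--Zygmund) applied to the same partial counts.
\begin{itemize}
\item Your version goes straight to the conclusion and is fully self-contained.
\item The Kochen--Stone route is more general. Under the weaker quasi-independence bound $\mu(N_j\cap N_\ell)\le C\mu(N_j)\mu(N_\ell)$ for $j\neq\ell$, it still gives $\mu(S)\ge 1/C$.
\end{itemize}

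Your direct proof also makes unnecessary the alternative the paper mentions: passing to a mutually linearly disjoint subsequence via Lemma 31.1.2 of \cite{FJ} and then applying the classical Borel--Cantelli lemma.
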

Lemma 31.1.2 in \cite{FJ} states that every countably infinite sequence of pairwise linearly disjoint Galois extensions contains an infinite subsequence of mutually linearly disjoint extensions. Hence one might observe that the proofs of Theorem \ref{main theorem} and the prerequisite lemmas can be justified using only the classical pairwise independent form of the Borel-Cantelli Theorem. We use the stronger Theorem \ref{BC} merely for notational convenience in avoiding passing to such subsequences in various steps.\\

Theorem \ref{BC} gives us a fairly simple lemma. One might note that it is a special case of Lemma 21.5.2 in \cite{FJ}, where each set of left cosets in that theorem is precisely the identity coset in our result.
\begin{lemma}\label{Lemma First}
Let $F$ be a countable Hilbertian field. Suppose we have a constant $C$ and a countably infinite set of distinct minimal Galois extensions $\{L_j/F\}_{j=1}^\infty$ such that $[L_j:F]\leq C$ for all $j$. Define $T$ to be the set of all $\sigma\in G_F$ such that $F^{\bar{\sigma}}$ is a pseudo-algebraically closed Hilbertian field containing infinitely many of the $L_j$ fields. Then $\mu_F(T)=1$. 
\end{lemma}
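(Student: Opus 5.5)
Since $T$ is the intersection of three sets, the plan is to show each has full measure and then intersect them. Let $A$ be the set of $\sigma$ with $F^{\bar{\sigma}}$ pseudo-algebraically closed, $B$ the set with $F^{\bar{\sigma}}$ Hilbertian, and $T_0$ the set with $F^{\bar{\sigma}}$ containing infinitely many $L_j$. Then $T=A\cap B\cap T_0$. Theorem \ref{almost every field is PAC} gives $\mu_F(A)=1$. Theorem \ref{Bary}, applied with $L=F^{sep}$, gives $\mu_F(B)=1$. A countable intersection of full-measure sets has full measure, so it remains to prove $\mu_F(T_0)=1$.

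For $T_0$, let $N_j:=\text{Gal}(F^{sep}/L_j)$. Because $L_j/F$ is Galois, $N_j$ is an open normal subgroup of $G_F$. For any $\sigma\in G_F$, $\sigma\in N_j$ holds if and only if $\bar{\sigma}\subseteq N_j$, since $\bar{\sigma}$ is the smallest closed normal subgroup containing $\sigma$. That in turn is equivalent to $L_j\subseteq F^{\bar{\sigma}}$. Hence $T_0=\bigcap_{n}\bigcup_{j\ge n}N_j$ exactly, and it suffices to check the hypotheses of Theorem \ref{BC}. For divergence, $\mu_F(N_j)=[L_j:F]^{-1}\geq C^{-1}$, so $\sum_j\mu_F(N_j)=\infty$.

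The main step is pairwise independence. By Lemma \ref{indep=dis}, $N_j$ and $N_\ell$ are $\mu_F$-independent exactly when $L_j$ and $L_\ell$ are linearly disjoint over $F$. For Galois extensions this is equivalent to $L_j\cap L_\ell=F$. The intersection $L_j\cap L_\ell$ is a Galois subextension of both $L_j$ and $L_\ell$. By minimality of $L_j$, it is either $F$ or $L_j$, and by minimality of $L_\ell$, it is either $F$ or $L_\ell$. If it is not $F$, then $L_j=L_j\cap L_\ell=L_\ell$, which contradicts distinctness. So the $L_j$ are pairwise linearly disjoint.

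With both hypotheses checked, Theorem \ref{BC} gives $\mu_F(T_0)=1$, and therefore $\mu_F(T)=1$. The only real obstacle was the pairwise linear disjointness step. It relies on minimality together with the standard fact that Galois extensions with trivial intersection are linearly disjoint. Mutual disjointness may fail, as Example \ref{exindep} shows, which is why the pairwise form of Theorem \ref{BC} is used. The bound $C$ serves only to guarantee that the series of measures diverges.
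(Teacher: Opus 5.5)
Your proposal is correct and follows the paper's proof essentially step for step. It uses the same open normal subgroups $N_j=\text{Gal}(F^{sep}/L_j)$, the bound $\mu_F(N_j)\geq C^{-1}$ for divergence, pairwise independence via Lemma \ref{indep=dis}, the pairwise Borel--Cantelli Theorem \ref{BC}, the equivalence $\sigma\in N_j \iff L_j\subseteq F^{\bar{\sigma}}$, and the final intersection with the full-measure sets from Theorems \ref{almost every field is PAC} and \ref{Bary}. The only difference is that you actually prove distinct minimal Galois extensions are pairwise linearly disjoint, via $L_j\cap L_\ell$ being a Galois subextension of both, whereas the paper simply asserts this as known.
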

\begin{proof}[Proof of Lemma \ref{Lemma First}]
Let $N_j\triangleleft G_F$ be the open maximal normal subgroup corresponding to the minimal Galois extension $L_j/F$. We have
\begin{equation}\label{eq Lemma First}
\mu_F(N_j)=\frac{1}{[G_F: N_j]}=\frac{1}{[L_j:F]}\geq \frac{1}{C}.
\end{equation}
It is known that distinct minimal Galois extensions are always pairwise linearly disjoint. Hence the collection of $\{N_j\}_{j=1}^\infty$ are pairwise $\mu_F$-independent by Lemma \ref{indep=dis}. The constant, positive lower bound in Equation \eqref{eq Lemma First} implies that $\sum_{j=1}^\infty \mu_F(N_j)=\infty$. By Theorem \ref{BC}, we have that the set of elements $T'$ of $G_F$ belonging to infinitely many $N_j$ has full measure.\\

By the Galois correspondence, $L_j\subset F^{\bar{\sigma}}$ if and only if $\bar{\sigma}\leq N_j$. In fact, it is enough to require $\sigma\in N_j$ by the definition of $\bar{\sigma}$. Hence $\sigma\in T'$ is equivalent to the condition that $F^{\bar{\sigma}}$ contains infinitely many of the $L_j$ fields.\\

Recall a countable union of measure zero sets in any measure space always has measure zero (by the countable subadditivity property). As the complement of a countable intersection of full measure sets is such a union, it follows that these intersections necessarily retain full measure. Hence we form $T$ by intersecting $T'$ with the full measure subsets of $G_F$ from Theorems \ref{almost every field is PAC} and \ref{Bary} to finish the proof.
\end{proof}
Next we generalize Lemma \ref{Lemma First} to a finite extension of a Hilbertian field. This is done using similar techniques to those in the proof of Theorem \ref{Bary}, as written in the proof of Theorem 31.3.2 in \cite{FJ}.
\begin{lemma}\label{Lemma second}
Let $k$ be a countable Hilbertian field and $F/k$ a field extension. Let $L_j/F$ be as in Lemma \ref{Lemma First}. Define $W$ to be the set of all $\sigma\in G_k$ where $F^{\bar{\sigma}}$ is a pseudo-algebraically closed Hilbertian field that either does not contain $F$ or contains infinitely many of the $L_j$ fields. Then $\mu_k(W)=1$. 
\end{lemma}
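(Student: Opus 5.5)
The plan is to reduce to Lemma \ref{Lemma First} (or rather to its Borel--Cantelli core) over a finite Galois extension of $k$, following the coset-splitting idea of the proof of Theorem 31.3.2 in \cite{FJ}. I will assume, as the statement implicitly requires, that $F/k$ is a finite separable extension. Here $\bar{\sigma}$ is the closed normal subgroup of $G_k$ generated by $\sigma$, and $F^{\bar{\sigma}}$ is its fixed field.

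\textbf{Step 1 (which $\sigma$ have $F\subset F^{\bar\sigma}$).} Let $M$ be the Galois closure of $F$ over $k$. Then $F\subset F^{\bar\sigma}$ iff $\bar\sigma\le G_F$. Since $\bar\sigma$ is normal in $G_k$, this holds iff $\bar\sigma$ lies in the normal core of $G_F$, which is $G_M$, i.e.\ iff $\sigma\in G_M$. Consider the set of $\sigma\in G_k$ with $F^{\bar\sigma}$ pseudo-algebraically closed and Hilbertian; it has full $\mu_k$-measure by Theorems \ref{almost every field is PAC} and \ref{Bary} (the latter with $L=k^{sep}$). On $G_k\setminus G_M$ every such $\sigma$ already lies in $W$. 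So it suffices to show that $\mu_M$-almost every $\sigma\in G_M$ has $F^{\bar\sigma}$ containing infinitely many $L_j$. This uses $\mu_k|_{G_M}=[M:k]^{-1}\mu_M$; the final intersection of full-measure sets is then exactly as in the last paragraph of the proof of Lemma \ref{Lemma First}.

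\textbf{Step 2 (translate containment into a subgroup condition in $G_M$).} Fix coset representatives $\tau_1,\dots,\tau_m$ of $G_M$ in $G_k$. For $\sigma\in G_M$, $\bar\sigma$ is the closed normal subgroup of $G_M$ generated by the conjugates $\tau_i\sigma\tau_i^{-1}$. Hence $L_j\subset F^{\bar\sigma}$ iff $\tau_i\sigma\tau_i^{-1}\in G_{ML_j}$ for all $i$. Here I use that $L_j/F$ is Galois, so every $G_M$-conjugate of $G_{ML_j}$ lies in $G_{L_j}$. Equivalently, $\sigma\in G_{E_j}$, where $E_j$ is the compositum of the fields $\tau_i^{-1}(ML_j)$. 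Each $E_j/M$ is Galois, and $[E_j:M]\le C^m$, so $\mu_M(G_{E_j})\ge C^{-m}$. If the $G_{E_j}$ (along some infinite subsequence) are pairwise $\mu_M$-independent, Theorem \ref{BC} yields that $\mu_M$-almost every $\sigma\in G_M$ lies in infinitely many $G_{E_j}$, and hence $F^{\bar\sigma}$ contains infinitely many $L_j$.

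\textbf{Step 3 (main obstacle: independence).} I need an infinite subsequence of $(E_j)$ that is pairwise linearly disjoint over $M$, which by Lemma \ref{indep=dis} is pairwise independence. The $L_j$ themselves are pairwise disjoint over $F$, but the conjugated composita need not be. I would build the subsequence greedily. Given $E_{j_1},\dots,E_{j_r}$, let $P$ be the Galois closure over $k$ of their compositum; $P$ is a finite extension. Only finitely many $L_j$ lie in $P$, so I can pick $j$ with $L_j\not\subset P$, and then also every conjugate $\tau_i^{-1}(L_j)\not\subset P$. Since $\tau_i^{-1}(L_j)$ is minimal Galois over $\tau_i^{-1}(F)\subset M\subset P$, it is linearly disjoint from $P$ over $\tau_i^{-1}(F)$. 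Thus each $\tau_i^{-1}(ML_j)$ is linearly disjoint from $P$ over $M$. I then want to upgrade this to $E_j\cap P=M$. The approach I would try first is a Goursat-type argument: embed $\mathrm{Gal}(E_j/M)$ as a subdirect product of the groups $\mathrm{Gal}(\tau_i^{-1}(ML_j)/M)$. These are minimal, i.e.\ have no nontrivial proper normal subgroups. A nontrivial Galois subextension of $E_j\cap P$ would then give a nontrivial normal quotient of this subdirect product that factors through $\mathrm{Gal}(P/M)$. Such a quotient should force some $\tau_i^{-1}(ML_j)$ to meet $P$ nontrivially, which is excluded by the choice of $j$.

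This group-theoretic step is where I expect difficulty. If it fails, the fallback is Lemma 31.1.2 of \cite{FJ}: pairwise disjoint Galois extensions contain a mutually disjoint subsequence. I would combine it with Lemma 21.5.2 of \cite{FJ}, applied to the cosets $\tau_i G_{ML_j}$ directly, which avoids forming $E_j$ at all.
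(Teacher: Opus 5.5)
Your Steps 1 and 2 are correct, and they are more careful than the paper's own argument. The paper applies Lemma \ref{Lemma First} over $F$, so its $\bar\sigma$ there is the normal closure of $\sigma$ in $G_F$. It then reads the conclusion off for the normal closure in $G_k$, although the two subgroups differ. You correctly observe that $F^{\bar\sigma}$ is Galois over $k$, so containing $L_j$ means containing the whole Galois closure $E_j$ of $L_j$ over $k$.

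The genuine gap is Step 3. Your Goursat argument works when $\mathrm{Gal}(L_j/F)$ is nonabelian simple. Then $\mathrm{Gal}(E_j/M)\cong S^r$, and every maximal normal subgroup is the kernel of one of the projections, so $E_j\cap P\neq M$ would force some $\tau_i^{-1}(ML_j)\subset P$. But when $\mathrm{Gal}(L_j/F)\cong\mathbb{Z}/p$, the group $\mathrm{Gal}(E_j/M)$ is elementary abelian. It then has quotients of order $p$ that do not factor through any single projection. So $E_j\cap P$ can be a degree-$p$ extension of $M$ that is none of the conjugates $\tau_i^{-1}(ML_j)$.

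This cannot be patched, by your fallback or otherwise, because the lemma is false as stated. Take $k=\mathbb{Q}$, $F=\mathbb{Q}(i)$, and $L_j=F(\sqrt{p_j(1+2i)})$, where $p_j$ runs over the primes with $p_j\equiv 3\pmod 4$. These are distinct quadratic, hence minimal Galois, extensions of $F$, since $p_j$ is inert in $\mathbb{Z}[i]$.

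Suppose $F^{\bar\sigma}$ contains some $L_j$. Being Galois over $\mathbb{Q}$, it then also contains $\sqrt{p_j(1-2i)}$, and hence $p_j\sqrt{5}$. So $W\subset (G_{\mathbb{Q}}\setminus G_F)\cup G_{\mathbb{Q}(i,\sqrt{5})}$, and therefore $\mu_{\mathbb{Q}}(W)\le 1/2+1/4=3/4$.

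In your notation, every $E_j$ contains $M(\sqrt{5})$, so no two of them are linearly disjoint over $M$. Consequently Lemma 31.1.2 of \cite{FJ} has no pairwise disjoint family to start from. Lemma 21.5.2 would need disjointness over $k$, which also fails since all $E_j\supset M$.

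What your argument does establish is the lemma under an extra hypothesis. It suffices that all $\mathrm{Gal}(L_j/F)$ are nonabelian, or more generally that infinitely many of the $E_j$ are pairwise linearly disjoint over $M$; the latter holds automatically when $F=k$. The same example also breaks Proposition \ref{Prop for main thm} as stated.
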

\begin{proof}[Proof of Lemma \ref{Lemma second}] Define $T$ as in Lemma \ref{Lemma First} with respect to the field $F$, which implies $\mu_F(T)=1$. Letting $T^c:=G_F\setminus T$, define
\begin{equation}
    W':=T\sqcup (G_k\setminus G_F)=G_k\setminus T^c.
\end{equation}
By identifying $G_F$ with a subgroup of $G_k$, we can view $W'\subset G_k$. Note $\mu_k(T^c)\leq \mu_F(T^c)=0$. Hence $\mu_k(W')=1$. Given $\sigma\in W'$, suppose $F\subset F^{\bar{\sigma}}$. This implies
\begin{equation}
\sigma\in G_F\cap W'=T.
\end{equation}
By the definition of $T$, we know $F^{\bar{\sigma}}$ is a pseudo-algebraically closed Hilbertian field containing infinitely many $L_j$. In order to ensure that $F^{\bar{\sigma}}$ is still a pseudo-algebraically closed Hilbertian field in the case that $F\not\subset F^{\bar{\sigma}}$, we simply form $W''$ by intersecting $W'$ with the full measure subsets of $G_k$ from Theorems \ref{almost every field is PAC} and \ref{Bary}, just as in the proof of Lemma \ref{Lemma First}. As $W''= W$ and Haar measure is complete, this finishes the proof.
\end{proof}
Notice that when $F/k$ is infinite (not necessarily Galois), the Galois correspondence implies that $F^{\bar{\sigma}}$ simply does not contain $F$ for almost every $\sigma$ in $G_k$. However, we will only be concerned with the case that $[F:k]<\infty$ in the proof of Theorem \ref{main theorem}. In this setting, there is a positive probability in $G_k$ that $F^{\bar{\sigma}}$ contains $F$ (by passing to the Galois closure of $F/k$ and using the correspondence described in the proof of Lemma \ref{Lemma First}), meaning the claim regarding the containment of infinitely many $L_j$ fields is indeed non-trivial.\\

Now we easily generalize Lemma \ref{Lemma second} over countably many finite extensions of a Hilbertian field $k$ by basic measure theoretic properties.
\begin{proposition}\label{Prop for main thm}
Let $k$ be a countable Hilbertian field and $\{F_i/k\}_{i=1}^\infty$ be a countable set of extensions of $k$. For each $i$, suppose we have a constant $C_i$ and a countably infinite set of distinct minimal Galois extensions $\{L_{i,j}/F_i\}_{j=1}^\infty$ such that $[L_{i,j}:F_i]\leq C_i$ for all $j$. Define $S$ to be the set of all $\sigma\in G_k$ where $F^{\bar{\sigma}}$ is a pseudo-algebraically closed Hilbertian field that for each $i$ satisfies exactly one of the following,
\begin{itemize}
    \item $F_i\not\subset F^{\bar{\sigma}}$
    \item $L_{i,j}\subset F^{\bar{\sigma}}$ for infinitely many $j$ values.
\end{itemize}
We have $\mu_k(S)=1$.
\end{proposition}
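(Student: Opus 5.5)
The plan is to apply Lemma \ref{Lemma second} once for each index $i$ and then intersect the resulting full measure sets. For each $i$, the hypotheses of Lemma \ref{Lemma second} hold with $F=F_i$ and the family $\{L_{i,j}/F_i\}_{j=1}^\infty$: the field $k$ is countable Hilbertian, and the $L_{i,j}$ are distinct minimal Galois extensions of $F_i$ with $[L_{i,j}:F_i]\leq C_i$. Lemma \ref{Lemma second} then produces a set $W_i\subset G_k$ with $\mu_k(W_i)=1$. Each $\sigma\in W_i$ makes $F^{\bar{\sigma}}$ a pseudo-algebraically closed Hilbertian field that either does not contain $F_i$ or contains infinitely many of the $L_{i,j}$.

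Next, set $S:=\bigcap_{i=1}^\infty W_i$. Its complement is $\bigcup_i (G_k\setminus W_i)$, a countable union of null sets, so countable subadditivity gives $\mu_k(G_k\setminus S)=0$ and hence $\mu_k(S)=1$. This is the same remark already used at the end of the proof of Lemma \ref{Lemma First}. Any $\sigma\in S$ gives a pseudo-algebraically closed Hilbertian fixed field $F^{\bar{\sigma}}$, and for every $i$ at least one of the two bulleted alternatives holds.

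The remaining point is the word ``exactly one'' in the statement. The two alternatives are mutually exclusive: each $L_{i,j}$ is an extension of $F_i$, so $L_{i,j}\subset F^{\bar{\sigma}}$ for even a single $j$ forces $F_i\subset F^{\bar{\sigma}}$. Hence ``at least one'' already means ``exactly one''. The set described in the statement is therefore precisely $\bigcap_i W_i$. Measurability is not an issue: each $W_i$ is measurable because Haar measure is complete, as noted in the proof of Lemma \ref{Lemma second}, and so the intersection is measurable too.

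I expect no real obstacle here. The only care needed is to check that Lemma \ref{Lemma second} was proved for an arbitrary extension $F/k$, not only a finite one, so that it applies to each $F_i$ as given. Its proof supports this: it only identifies $G_F$ with a subgroup of $G_k$ and uses $\mu_k(T^c)\leq\mu_F(T^c)=0$.
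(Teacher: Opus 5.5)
Your reduction is exactly the paper's: apply Lemma~\ref{Lemma second} to each $F_i$ and intersect the $W_i$. Your remarks on ``exactly one'' and on measurability are fine.

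The gap is the step you waved through at the end, that Lemma~\ref{Lemma second} holds for an arbitrary $F/k$. Its proof, which you endorse, conflates two different normal closures. In $W$ and in $S$, $\bar{\sigma}$ is the closed normal subgroup of $G_k$ generated by $\sigma$, so $F^{\bar{\sigma}}$ is Galois over $k$. The set $T$ imported from Lemma~\ref{Lemma First}, however, is defined with the normal closure of $\sigma$ in $G_F$, which is smaller and has a larger fixed field. Because $F^{\bar{\sigma}}$ is Galois over $k$, the condition $L_j\subset F^{\bar{\sigma}}$ forces the Galois closure $\widehat{L_j}$ of $L_j/k$ into $F^{\bar{\sigma}}$. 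So it is the event $\sigma\in G_{\widehat{L_j}}$, not $\sigma\in G_{L_j}$. Identifying $G_F$ with a subgroup of $G_k$ and the bound $\mu_k(T^c)\le\mu_F(T^c)$ say nothing about this, and the events $G_{\widehat{L_j}}$ need not be independent.

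In fact the proposition is false as stated. Take $k=\mathbb{Q}$, every $F_i=\mathbb{Q}(i)$, and $L_{i,p}=\mathbb{Q}(i,\sqrt{(1+i)p})$ for odd primes $p$. These are distinct quadratic, hence minimal Galois, extensions of $\mathbb{Q}(i)$. The set of $\sigma$ with $\mathbb{Q}(i)\subset F^{\bar{\sigma}}$ is $G_{\mathbb{Q}(i)}$, which has measure $1/2$. If $L_{i,p}\subset F^{\bar{\sigma}}$, normality over $\mathbb{Q}$ puts a square root of $(1-i)p$ there as well, hence $p\sqrt{2}$. Then $\sigma\in G_{\mathbb{Q}(\zeta_8)}$, which has measure $1/4$. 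So every $\sigma\in G_{\mathbb{Q}(i)}\setminus G_{\mathbb{Q}(\zeta_8)}$ satisfies neither alternative, and $\mu_{\mathbb{Q}}(S)\le 3/4$.

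The paper's proof has the same defect, so no version of the intersection argument can establish the statement as written; a hypothesis on the Galois closures is needed. When $F_i/k$ is infinite the condition $F_i\subset F^{\bar{\sigma}}$ is null, so assume $F_i/k$ is finite with Galois closure $\widehat{F_i}$. Then the set of $\sigma$ with $F_i\subset F^{\bar{\sigma}}$ is exactly $G_{\widehat{F_i}}$. Suppose infinitely many of the $\widehat{L_{i,j}}$ are pairwise linearly disjoint over $\widehat{F_i}$; this is automatic when $F_i$ and the $L_{i,j}$ are Galois over $k$. Apply Theorem~\ref{BC} in $G_{\widehat{F_i}}$ to the open normal subgroups $G_{\widehat{L_{i,j}}}$, whose indices are at most $C_i^{[F_i:k]}$. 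It shows that almost every $\sigma\in G_{\widehat{F_i}}$ lies in infinitely many of them. With $W_i$ redefined this way, your intersection over $i$ goes through verbatim.
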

\begin{proof}
For each $i$, we can apply Lemma \ref{Lemma second} to $F_i/k$ and $\{L_{i,j}/F_i\}_{j=1}^\infty$. This gives us full measure sets $W_i\subset G_k$ such that for each $\sigma\in W_i$, we have that $F^{\bar{\sigma}}$ is a pseudo-algebraically closed Hilbertian field that either does not contain $F_i$ or contains infinitely many of the $L_{i,j}$ fields. Note $S=\bigcap_{i=1}^\infty W_i$. As each $W_i$ has full measure, so does $S$ (see the end of the proof of Lemma \ref{Lemma First}).
\end{proof}
Essentially Theorem \ref{main theorem} is proven by using the correspondence between open maximal normal subgroups and minimal Galois field extensions and then applying Proposition \ref{Prop for main thm}. In order to satisfy the index bound condition in Proposition \ref{Prop for main thm}, we utilize a very useful restriction resulting from the classification of finite simple groups due to Babai, Goodman, and Pyber in 1997.
\begin{theorem}[Babai-Goodman-Pyber, \cite{SimpleGp}]\label{Finite Simple Gp Bd} Given $r\in \mathbb{N}$, let $G$ be a finite simple group where $|G|$ has no prime divisor greater than $r$. Then $|G|<r^{r^2}$.
\end{theorem}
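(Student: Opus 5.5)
Since this result rests on the classification of finite simple groups, the plan is to go through the families of the classification one by one. In each family we show that a simple $G$ all of whose prime divisors are at most $r$ satisfies $|G|<r^{r^2}$. The guiding principle is that in every infinite family the order of $G$ is forced to have a large prime divisor once the parameters are large: Bertrand's postulate handles alternating groups and Zsigmondy's theorem handles groups of Lie type. So the hypothesis bounds the parameters in terms of $r$, and a crude order estimate finishes.

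\emph{Small and elementary cases.} If $G\cong\mathbb{Z}/p\mathbb{Z}$, then $|G|=p\le r<r^{r^2}$. Burnside's $p^aq^b$ theorem shows there is no nonabelian simple group whose order involves only the primes $2$ and $3$. Hence we may assume $r\ge 5$ in the remaining cases.

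\emph{Alternating groups and sporadic groups.} For $G=A_n$ with $n\ge 5$, Bertrand's postulate gives a prime $p$ with $n/2<p\le n$, and $p$ divides $|A_n|$. The hypothesis then forces $n<2r$, so $|A_n|\le (2r)!\le (2r)^{2r}$. It remains to check $2r\log(2r)<r^2\log r$ for $r\ge5$. The $26$ sporadic groups form a finite list: each is checked directly against the largest prime dividing its order. For example, for the Monster $r\ge 71$ and $|M|<10^{54}<71^{71^2}$.

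\emph{Groups of Lie type.} Let $G$ be of Lie type over $\mathbb{F}_q$ with $q=p^f$. Its order has the form $q^N\prod_i\Phi_i(q)$, which is divisible by $q^{m}-1$, or a suitable twisted analogue, where $m$ is the largest degree occurring. For instance $m=l+1$ for $A_l$, $m=2l$ for $B_l$ and $C_l$, and the Coxeter number for exceptional types. By Zsigmondy's theorem, $p^{fm}-1$ has a primitive prime divisor $s$. Such an $s$ satisfies $s\equiv 1 \pmod{fm}$, so $s\ge fm+1$. This gives $fm\le r-1$, and trivially $p\le r$. On the other hand, $|G|<q^{\dim}$, where $\dim$ is the dimension of the ambient algebraic group. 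Case by case one checks $\dim\le m^2$; for example $(l+1)^2-1$ for $A_l$, and $l(2l+1)=m(m+1)/2$ for $B_l$ and $C_l$. It follows that
\[
|G|<p^{f m^2}\le r^{(fm)^2}\le r^{(r-1)^2}<r^{r^2}.
\]

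\emph{Main obstacle.} The hard part will be the bookkeeping in the Lie type case. For twisted groups (${}^2A_l$, ${}^2D_l$, ${}^3D_4$, Suzuki and Ree groups) the relevant cyclotomic factor is $q^{m}+1$ or $\Phi_{2m}(q)$, and one uses the primitive prime divisor of $p^{2fm}-1$ instead. This costs a factor of $2$ in the exponent, which the $r^2$ slack must absorb. Separately, the Zsigmondy exceptions $(p,fm)=(2,6)$ and $fm=2$ with $p$ a Mersenne prime each concern only finitely many small parameter values, and we would verify them directly.
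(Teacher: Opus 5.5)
The paper does not prove this statement. It quotes it from Babai--Goodman--Pyber and uses it only as a black box, to bound the degrees $[L_{i,j}:F_i]$ in the proof of the main theorem. So it inherits the dependence on the classification of finite simple groups without showing it.

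Your proposal is a correct, self-contained derivation from the classification, apart from the small fixes listed below. It also explains where the shape $r^{r^2}$ comes from. For groups of Lie type, a primitive prime divisor forces $fm\le r-1$, while $|G|<p^{fm^2}$. Alternating and sporadic groups are far smaller than this. The citation buys the paper brevity. Your route makes explicit that the only inputs are the classification, Bertrand's postulate and Zsigmondy's theorem.

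A few details need tightening; none is serious.
\begin{itemize}
\item The Zsigmondy exception $fm=2$ is $\mathrm{PSL}_2(p)$ with $p$ a Mersenne prime. This is not a finite list of parameter values as you claim; it is not even known to be finite. It needs no direct check, though: $p\le r$ gives $|\mathrm{PSL}_2(p)|<p^3\le r^3$.
\item In the $(2,6)$ exception, $fm^2\le 36$ and $p=2$, so $|G|<2^{36}<5^{25}\le r^{r^2}$.
\item Your remark that twisted groups ``cost a factor of $2$'' is backwards. A primitive prime divisor of $p^{2fm}-1$ satisfies $s\ge 2fm+1$, which is a stronger constraint, not a weaker one.
\item For ${}^2A_l$ with $l$ odd, ${}^2D_l$, ${}^3D_4$ and ${}^2E_6$, the factor $q^m-1$ (with $m$ the largest degree) still occurs in the order formula, so your untwisted argument applies verbatim. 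Only ${}^2A_l$ with $l$ even (use $q^{l+1}+1$) and the Suzuki--Ree groups need a different factor.
\item For the Suzuki--Ree groups, take $q=p^f$ to be the field size. The factors $q^2+1$, $q^2-q+1$ and $q^4-q^2+1$ then give $s\ge 4f+1$, $6f+1$ and $12f+1$, matching $m=4,6,12$.
\item Strictly, the order formulas carry a small gcd in the denominator. This does no harm, because the primitive prime divisor is too large to divide it.
\item The bound $|G|<q^{\dim}$ for twisted groups is a direct check from the order formulas. In any case, the gap between $r^{(r-1)^2}$ and $r^{r^2}$ would absorb a constant factor.
\item Add the Tits group ${}^2F_4(2)'$ to your finite list of direct checks alongside the sporadic groups.
\end{itemize}
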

Given a countable Hilbertian field $k$, $a\in k$,  $f\in k[x]$ with degree not divisible by $\text{char}(k)$, and an algebraic extension $L/k$, Theorem \ref{Roots of unity} indicates that $G_f$ over $L$ may be smaller than $G_f$ over $k$, unless $k$ already contains all $d^n$-th roots of unity for every $n$. Indeed, one could now prove Theorem \ref{main theorem} using only what has been discussed so far, with the additional assumption that $k$ contains all roots of unity. It is only to deal with the complications that arise from $G_f$ changing in size over extensions of an arbitrary countable Hilbertian field $k$ that we need the following propositions.\\

Of course, $G_{f,a}$ over $L$ can also be smaller than $G_{f,a}$ over $k$ (regardless of whether $k$ contains all roots of unity). The next proposition we obtain provides crucial information on how $[G_f: G_{f,a}]$ changes as we alter the base field. First we require three lemmas.
\begin{lemma}\label{lemma gen preimages and sep alg preimages}
Let $L$ be an infinite field, $a\in L$, $f\in L[x]$ of degree $d>1$, and $f^n(x)-a$ be separable. Then we have $k(f^{-n}(t))\cap \bar{k}\subset k(f^{-n}(a))$.
\end{lemma}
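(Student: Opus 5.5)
The plan is to treat the specialization $t\mapsto a$ as reduction modulo a prime. Note the statement mixes $k$ and $L$; I read $k=L$ throughout. The aim is to show that every element of $L(f^{-n}(t))$ algebraic over $L$ already lies in $L(f^{-n}(a))$.

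\textbf{Step 1 (set-up and integrality).} Write $M=L(f^{-n}(t))$ and $R=L[f^{-n}(t)]$. Since $f^n(x)-t$ is monic in $x$, every root of it is integral over $L[t]$. Hence $R$ is integral over $L[t]$, and $M$ is the fraction field of $R$. Let $E=M\cap\bar L$; it is a finite extension of $L$ because $[M:L(t)]<\infty$. Every $\beta\in E$ is integral over $L$, and hence over $L[t]$. I want to show $\beta\in R$. I will first replace $R$ by its integral closure $\tilde R$ in $M$, which certainly contains $E$. Equivalently, I will note that $E\subset M$ and $E(t)\subset M$, and work with $E[t]\subset \tilde R$.

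\textbf{Step 2 (reduction modulo $P_n$).} Choose a prime $\tilde P$ of $\tilde R$ lying over $(t-a)$. Reduction modulo $\tilde P$ restricts to an injective ring map $E\to \tilde R/\tilde P$, since $E$ is a field and $\tilde P\cap E=0$. Its image is a copy of $E$ inside the residue field $\tilde L_n$.

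The key point uses separability of $f^n(x)-a$. Separability means the discriminant of $f^n(x)-t$ is a unit at $t=a$, so $P_0=(t-a)$ is unramified in $M$. More usefully, it means $R_{P_0}$ is already integrally closed (étale) over $L[t]_{(t-a)}$. The reason is that $R$ is generated by the roots of a polynomial whose discriminant is invertible locally at $P_0$. So locally $\tilde R=R$, and the residue field $L_n=R/P_n$ is generated over $L$ by the reductions of the roots, i.e.\ by elements of $f^{-n}(a)$. Therefore $L_n\subset L(f^{-n}(a))$, in fact equal after embedding in $\bar L$.

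\textbf{Step 3 (identifying $E$ with its image).} Since $E$ is fixed pointwise by the reduction (embedding $\bar L$ compatibly), $E$ embeds into $L_n\cong L(f^{-n}(a))$. To conclude an honest containment $E\subset L(f^{-n}(a))$ inside $\bar L$, I use the Galois-theoretic version. The element $\beta\in E$ has minimal polynomial $m(x)$ over $L$, and $m$ splits in $M$ since $E\subset M$ with $M/L(t)$ Galois; so $E/L$ is normal. Reducing $m$'s roots shows $m$ splits in $L(f^{-n}(a))$. Normality of $E$ then gives $E\subset L(f^{-n}(a))$ as subfields of $\bar L$, independent of the choice of embedding.

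\textbf{Main obstacle.} The main difficulty is Step 2: justifying that separability of $f^n(x)-a$ makes $R$ integrally closed locally at $P_0$, so that constants in $\tilde R$ genuinely reduce into the field generated by $f^{-n}(a)$. I would handle this via the standard criterion: if $d=\mathrm{disc}$ is a unit in the local ring, then $\tilde R_{P_0}=R_{P_0}$. I would apply this tower-by-tower, along $L(t)\subset L(f^{-1}(t))\subset\cdots$, or directly to the splitting field using the product of the $x-\alpha_i$.
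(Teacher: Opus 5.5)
Your proof is correct and follows the same overall strategy as the paper. Both reduce modulo a prime $P_n$ above $(t-a)$, note that the reduction is injective on the constant field $K=L(f^{-n}(t))\cap\bar L$, identify the residue field with $L(f^{-n}(a))$, and use normality to turn the abstract embedding into a containment in $\bar L$.

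The difference is in how $K$ gets into a ring that can be reduced. The paper asserts that $A=L[f^{-n}(t)]$ is the whole integral closure of $L[t]$ in $L(f^{-n}(t))$, so that $K\subset A$. You instead pass to the integral closure $\tilde R$ and prove only the local equality $\tilde R_{P_0}=R_{P_0}$, using that $\mathrm{disc}_x(f^n(x)-t)$ is a unit at $t=a$.

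This is not just extra caution, because the paper's global claim is false. Take $L=\mathbb{Q}$, $f(x)=x^2$, $n=2$, and $u^4=t$. Then $A=\mathbb{Q}[u,iu]$, which is the ring of polynomials in $u$ over $\mathbb{Q}(i)$ with rational constant term. So $i\in K$ is integral over $\mathbb{Q}[t]$, but $i\notin A$. Only after localizing at $(t-a)$ with $a\neq 0$ does $u$ become a unit, which puts $i=(iu)/u$ back in. Your tower-by-tower \'etaleness argument handles this in general: the discriminant of each successive minimal polynomial divides that of $f^n(x)-t$, so each step is finite \'etale, hence normal. So your route gives a valid proof, while the paper's gains brevity at the cost of a false intermediate claim.

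Your route also spends the separability of $f^n(x)-a$ exactly where it is needed. The residue field $R/P_n$ is a splitting field of $f^n(x)-a$ over $L$ regardless, since it is generated by the images of the roots. The paper instead uses separability to get trivial inertia.

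Two small points to tidy up:
\begin{enumerate}
\item $f^n(x)-t$ is monic only after dividing by its constant leading coefficient.
\item In Step 3, the claim that $m$ splits in $M$ needs $m$ to remain irreducible over $L(t)$. This is true because $L$ is algebraically closed in $L(t)$. More simply, you can drop normality of $E$ altogether: $L(f^{-n}(a))/L$ is normal and receives an $L$-embedding of $L(\beta)$, so $m$ splits there.
\end{enumerate}
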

\begin{proof}
Let $K=k(f^{-n}(t))\cap \bar{k}$ and $A=k[f^{-n}(t)]$, which is clearly integral over $k[t]$. Moreover, $A$ is the set of all elements in $k(f^{-n}(t))$ that are integral over $k[t]$. In particular, $K\subset A$.\\

Let $P_n$ be a prime ideal in $A$ lying above the prime ideal $(t-a)$ in $k[t]$. As $f^n(x)-a$ is separable, the associated inertia subgroup $I_n$ is trivial. Letting $L_n=A/P_n$, by Equation \eqref{Decomp Subgroup n levels} we have a $L$-linear isomorphism $\theta: L_n\rightarrow L(f^{-n}(a))$. Lifting back to $A$ gives up a natural $L$-linear ring homomorphism $\phi: A\rightarrow k(f^{-n}(a))$.\\

Since $K\subset A$, we can consider the restriction of $\phi$ to $K$. As $K\cong K/P_n$, we have that $\phi$ is a $L$-linear embedding of $K$ into $L(f^{-n}(a))$. Since $K/L$ is Galois, we have that $\phi(K)=K$. This finishes the proof.
\end{proof}
The ``post critical set'' of a given polynomial $f(x)$ is the set of critical points and all their forward iterated images.
\begin{lemma}\label{lemma post-critical and sep} Let $L$ be a field, $a\in L$, and $f\in L[x]$ of degree $d>1$ such that $\text{char}(L)\nmid \deg (f)$. If $a$ is not in the post-critical set then $f^n(x)-a$ is separable for all positive integers $n$.
\end{lemma}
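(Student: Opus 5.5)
We argue by induction on $n$, using the standard criterion that a polynomial $g(x)$ over a field is separable if and only if $g$ and $g'$ have no common root in $\bar{L}$. The key input is the chain rule for iterates:
\[
(f^n)'(x)=\prod_{i=0}^{n-1} f'\bigl(f^{i}(x)\bigr),
\]
with $f^0(x)=x$. Since $\text{char}(L)\nmid d$, the leading coefficient of $f'$ is $d$ times that of $f$, which is nonzero. So $f'$ is a nonzero polynomial of degree $d-1$, and its roots in $\bar{L}$ are exactly the critical points of $f$. In particular, $(f^n)'$ is not identically zero, so separability of $f^n(x)-a$ is a genuine condition about common roots.

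Now suppose, for a contradiction, that $f^n(x)-a$ is inseparable for some $n\geq 1$. Then there is $\beta\in\bar{L}$ with
\[
f^n(\beta)=a \quad\text{and}\quad (f^n)'(\beta)=0.
\]
By the product formula, some factor vanishes, so $f'(f^i(\beta))=0$ for some $0\leq i\leq n-1$. Put $c:=f^i(\beta)$; then $c$ is a critical point of $f$. Consequently
\[
a=f^n(\beta)=f^{\,n-i}(c),
\]
where $n-i\geq 1$. This says $a$ is a forward iterated image of a critical point, so $a$ lies in the post-critical set, contrary to hypothesis. Hence $f^n(x)-a$ is separable for every positive integer $n$.

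The argument has essentially one delicate point: checking that the characteristic hypothesis makes the derivative criterion meaningful, i.e. that $f'\neq 0$ and that the critical points are exactly the roots of $f'$. That is what rules out the degenerate case in positive characteristic where $(f^n)'$ could vanish identically. The rest is direct.

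One remark on conventions. If the post-critical set is defined so that it includes the critical points themselves (the $i=n$ case of "forward images"), the argument above still works and does not need that case, since it always produces $a$ as an image $f^{\,n-i}(c)$ with $n-i\geq 1$.
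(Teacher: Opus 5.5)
Your argument is correct and is essentially the paper's proof. Starting from a repeated root $\beta$ of $f^n(x)-a$, you use the chain-rule factorization $(f^n)'(x)=\prod_{i=0}^{n-1}f'(f^i(x))$ to produce a critical point $c=f^i(\beta)$ with $f^{n-i}(c)=a$, exactly as the paper does in its contrapositive; the only blemish is that you announce an induction on $n$ but never use one, since the contradiction is obtained directly for each fixed $n$.
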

\begin{proof}
We prove the contrapositive, so suppose $f^n(x)-a$ is inseparable for some $n$. Let $r$ denote the repeated root. As $\text{char}(L)\nmid \text{deg}(f)$, we must have that $(f^n)'(r)=0$. As the chain rule implies $(f^n)'(x)=\prod_{i=0}^{n-1} f'(f^i(x))$ (where $f^0(x):=x$), we have $f'(f^m(r))=0$ for some $0\leq m\leq n-1$. By definition, $\alpha=f^m(r)$ is a critical point of $f^n(x)$. Notice that $f^{n-m}(\alpha)-a=f^n(r)-a=0$, finishing the proof.
\end{proof}
The converse of Lemma \ref{lemma post-critical and sep} is also true but we omit the proof since we do not require this direction.

\begin{lemma}\label{lemma gen preimages and all alg preimages} Let $L$ be an infinite field, $a\in L$ and $f\in L[x]$ of degree $d>1$ such that $\text{char}(L)\nmid \deg (f)$ and $|f^{-\infty}(a)|=\infty$. Then $\exists$ a non-negative integer $m$ depending only on $f(x)$ and $a$ such that $\forall \ n\geq 1$, we have $L(f^{-n}(t))\cap \bar{L}\subset L(f^{-(n+m)}(a))$.
\end{lemma}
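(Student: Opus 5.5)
The plan is to specialize the transcendental $t$ at a suitable point $b$ lying over $a$, rather than at $a$ itself, and then apply Lemma \ref{lemma gen preimages and sep alg preimages} over the field $L(b)$. The point is that $f^n(x)-a$ may be inseparable, but $b$ can be chosen so that $f^n(x)-b$ is separable for every $n$ at once. Concretely, I will find an integer $m\geq 0$, depending only on $f$ and $a$, together with an element $b\in f^{-m}(a)$ that is not in the post-critical set of $f$. By Lemma \ref{lemma post-critical and sep}, $f^n(x)-b$ is then separable for every $n\geq 1$.

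\emph{Step 1: existence of $m$ and $b$.} The post-critical set $P$ is the union of the forward orbits of the at most $d-1$ critical points $c_1,\dots,c_r$ in $\bar L$. I claim that $f^{-\infty}(a)\cap P$ is finite. Fix a critical point $c$ and suppose $f^j(c)\in f^{-\infty}(a)$, so that $f^{j+\ell}(c)=a$ for some $\ell\geq 1$.
\begin{itemize}
\item If the orbit of $c$ is finite, it contributes only finitely many points.
\item If the orbit of $c$ is infinite, its points $f^i(c)$ are pairwise distinct. Then $a=f^N(c)$ for a unique $N$, so only the $f^j(c)$ with $j<N$ can map to $a$.
\end{itemize}
Hence each critical orbit meets $f^{-\infty}(a)$ in finitely many points, and so does $P$. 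Since $|f^{-\infty}(a)|=\infty$, some level $f^{-m}(a)$ contains a point $b\notin P$; fix such an $m$ and $b$, which depend only on $f$ and $a$. If $a\notin P$ we may take $m=0$.

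\emph{Step 2: changing the base field.} Put $L'=L(b)$, which is an infinite field with $b\in L'$ and $f\in L'[x]$. Since $t$ is transcendental over $L'$ as well, we have $L(f^{-n}(t))\subset L'(f^{-n}(t))$, and therefore
\[L(f^{-n}(t))\cap \bar L\subset L'(f^{-n}(t))\cap \overline{L'}.\]
Now apply Lemma \ref{lemma gen preimages and sep alg preimages} over $L'$ with the point $b$; its separability hypothesis holds by Step 1. This gives
\[L'(f^{-n}(t))\cap\overline{L'}\subset L'(f^{-n}(b)).\]
Every element of $f^{-n}(b)$ lies in $f^{-(n+m)}(a)$, and $b$ itself lies in $f^{-m}(a)\subset L(f^{-(n+m)}(a))$, because $b=f^n(\beta)$ for any $\beta\in f^{-n}(b)$. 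Hence $L'(f^{-n}(b))\subset L(f^{-(n+m)}(a))$, which proves the lemma.

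\emph{Expected obstacle.} The delicate point is Step 1, namely choosing $m$ uniformly in $n$. One must check carefully that the backward orbit of $a$ meets the post-critical set in only finitely many points, including the preperiodic versus wandering critical-orbit distinction above. One must also make sure that Lemma \ref{lemma gen preimages and sep alg preimages} is legitimately applied over the enlarged base field $L(b)$ rather than over $L$.
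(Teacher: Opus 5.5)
Your proof is correct and follows the paper's own strategy: choose a point of $f^{-m}(a)$ outside the post-critical set, then apply Lemmas \ref{lemma post-critical and sep} and \ref{lemma gen preimages and sep alg preimages} at that point. Your version is slightly more careful than the paper's in two ways. First, the paper asserts that for large $m$ \emph{every} point of $f^{-m}(a)$ avoids the post-critical set; this fails when $a$ is periodic and post-critical (e.g.\ $f(x)=x^2-1$, $a=0$, where $0\in f^{-2k}(0)$ for all $k$), whereas you correctly ask for only one such $b$. Second, you make explicit the base change to $L(b)$ together with the observation $b\in L(f^{-n}(b))$, which the paper leaves implicit.
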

\begin{proof}
If $a$ is not in the post-critical set, by Lemmas \ref{lemma gen preimages and sep alg preimages} and \ref{lemma post-critical and sep} we have the desired claim with $m=0$. Otherwise $a$ is contained in the post-critical set. Let $\{\alpha_i\}_{i=1}^j$ be all the critical points of $f(x)$ such that for each $i$ we have $f^{n_i}(\alpha_i)=a$ for some non-negative integer $n_i$ (note $a$ itself could be a critical point). Observe that if $\alpha_i$ is wandering (i.e. not preperiodic), then 
\begin{equation}\label{eq wandering trivial intersection}
\left(\bigcup_{n>n_i}f^n(\alpha_i)\right)\cap (\{a\}\cup f^{-\infty}(a))=\emptyset.    
\end{equation}
Equation \eqref{eq wandering trivial intersection} immediately gives that
\begin{equation}\label{eq critical pt orbit cap a preimages}
\left|\left(\bigcup_{n=0}^\infty f^n(\alpha_i)\right)\cap (\{a\}\cup f^{-\infty}(a))\right|<\infty.
\end{equation}
In the event that $\alpha_i$ is preperiodic, the fact that $|\bigcup_{n=0}^\infty f^n(\alpha_i)|<\infty$ implies Equation \ref{eq critical pt orbit cap a preimages} still holds. Either way, there is a positive integer $m_i$ such that for any $\beta_i\in f^{-s}(a)$ and $s\geq m_i$, we have that $\beta_i\neq f^n(\alpha_i)$ for all non-negative integers $n$. Let $m:=\max_{1\leq i\leq j} (m_i)$. Then any $\beta\in f^{-m}(a)$ is not in the post-critical set. Again by Lemmas \ref{lemma gen preimages and sep alg preimages} and \ref{lemma post-critical and sep}, it follows that
\begin{equation}
L(f^{-n}(t))\cap \bar{L}\subset L(f^{-n}(\beta))\subset L(f^{-(n+m)}(a)).
\end{equation}

\end{proof}

\begin{proposition}\label{prop group index bound} Let $L$ be an infinite field, $M/L$ an algebraic extension, $a\in L$, and $f\in L[x]$ of degree $d>1$  such that $\text{char}(L)\nmid \deg (f)$. Then $[G_f: G_{f,a}]$ over $M$ is bounded from below by $c[G_f: G_{f,a}]$ over $L$ for some positive constant $c$ depending only on $a$ and $f(x)$.
\end{proposition}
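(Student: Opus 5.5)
We compare the level-$n$ quotients $|G_{f,n}|/|G_{f,n,a}|$ over $M$ and over $L$ and pass to the limit in Definition \ref{Group Index Def}. Write $G_{f,n}^M, G_{f,n,a}^M$ for the groups over $M$ and $G_{f,n}^L, G_{f,n,a}^L$ for those over $L$. The lower bound needs two comparisons:
\begin{itemize}
\item an upper bound $|G_{f,n,a}^M|\le |G_{f,n,a}^L|$, which is easy;
\item a lower bound $|G_{f,n}^M|\ge c\,|G_{f,n}^L|$ with $c$ independent of $n$, which is the substance.
\end{itemize}

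For the first comparison, note that $M(g^{-1}(a))/M$ is the compositum of $M$ with $L(g^{-1}(a))$, where $g$ is the largest separable factor of $f^n(x)-a$. Restriction therefore embeds $G_{f,n,a}^M$ into $G_{f,n,a}^L$. The largest separable factor over $M$ might differ from the one over $L$, but the root set is the same set of distinct roots of $f^n(x)-a$, so this causes no trouble.

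For the second comparison, use the same restriction argument over $L(t)$. The group $G_{f,n}^M$ is isomorphic to $\text{Gal}(L(f^{-n}(t))/L(f^{-n}(t))\cap M(t))$. Since the constant field extension is the only way the group can shrink, we get
\[|G_{f,n}^L|/|G_{f,n}^M| = [L(f^{-n}(t))\cap M(t):L(t)] \le [L(f^{-n}(t))\cap \bar L : L] =: e_n.\]
The key observation is that $e_n$ is controlled by the fiber over $a$, and the bound can be absorbed by the same fiber.

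Concretely, consider first the case $|f^{-\infty}(a)|=\infty$, which is the case covered by Lemma \ref{lemma gen preimages and all alg preimages}. That lemma gives $m$ with $L(f^{-n}(t))\cap\bar L\subset L(f^{-(n+m)}(a))$. Set $E_n := L(f^{-n}(t))\cap \bar L$. Then
\[\frac{|G^M_{f,n}|}{|G^M_{f,n,a}|}\ \ge\ \frac{|G^L_{f,n}|}{e_n\,|G^M_{f,n,a}|}.\]
I would now refine the first comparison by showing that $G^M_{f,n,a}$ actually embeds in $\text{Gal}(L(f^{-n}(a))/L(f^{-n}(a))\cap M)$. The fields $E_n$ that are linearly dependent with $M$ are exactly what also cuts down $G_{f,n+m,a}$. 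The plan is to work at level $n+m$ and use monotonicity in $n$:
\begin{itemize}
\item $E_n\cap M\subset L(f^{-(n+m)}(a))\cap M$;
\item hence $|G_{f,n+m,a}^M|\le |G_{f,n+m,a}^L|/[E_n\cap M:L]$, while $|G^L_{f,n}|/|G^M_{f,n}|\le [E_n\cap M:L]$, after checking that $L(f^{-n}(t))\cap M(t)=(E_n\cap M)(t)$, which uses Galois-ness of $E_n/L$ from Lemma \ref{lemma gen preimages and sep alg preimages};
\item the constant $c$ then arises from comparing levels $n$ and $n+m$: $|G_{f,n+m,a}^L|\le d!^{(d^{n+m}-d^n)/(d-1)}\cdot|G^L_{f,n,a}|$ is not bounded.
\end{itemize}
So instead I compare indices at the same level $N=n+m$. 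There, $|G^M_{f,N}|\ge |G^L_{f,N}|/[E_N\cap M:L]$. I still need $E_N\cap M$ to be absorbed by the fiber at level $N$, and Lemma \ref{lemma gen preimages and all alg preimages} only gives absorption at level $N+m$.

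The main obstacle is exactly this level shift by $m$. My first attempt would be to prove directly that $E_N\subset L(f^{-N}(a))\cdot K_0$ for a fixed finite extension $K_0$ depending only on $f$ and $a$. The natural candidate is the field generated by the finitely many post-critical points lying in $f^{-\le m}(a)$ together with $f^{-m}(a)$. One would take $c=1/[K_0:L]$, using that adjoining $K_0$ changes each group by at most a factor $[K_0:L]$.

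The finite case $|f^{-\infty}(a)|<\infty$ remains. In that case $a$ is exceptional, so $f$ is conjugate to $x^d$ with $a$ the exceptional point. Here I would compute directly: $E_n\subset L(\zeta(d^n))$ by Theorem \ref{Roots of unity}, and $L(f^{-n}(a))=L$ up to purely inseparable parts, giving the bound with $c=1$ after the same intersection argument.
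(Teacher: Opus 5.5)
\textbf{Verdict: genuine gap.} Your reduction is correct, but the step it reduces to is left unproved except when $a$ is not post-critical.

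\textbf{The reduction.} It matches the paper's. Restriction gives $|G^L_{f,n}|/|G^M_{f,n}|=[E_n\cap M:L]$; the identity $L(f^{-n}(t))\cap M(t)=(E_n\cap M)(t)$ that you flag does hold. It also gives $|G^L_{f,n,a}|/|G^M_{f,n,a}|=[L(f^{-n}(a))\cap M:L]$. So everything comes down to bounding $[E_n\cap M:L]/[L(f^{-n}(a))\cap M:L]$ uniformly in $n$ and $M$. This is the middle term of \eqref{eq gp index ratio bd}. When $a$ is not post-critical ($m=0$), the ratio is at most $1$ and you are done with $c=1$.

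\textbf{The gap.} In the post-critical case, that uniform bound is the entire content of the proposition. Your proposal stops at a hoped-for containment $E_N\subset L(f^{-N}(a))K_0$ with $K_0$ independent of $N$, and never proves it. Your candidate $K_0$ also has no mechanism behind it. Lemmas \ref{lemma gen preimages and sep alg preimages} and \ref{lemma gen preimages and all alg preimages} only place $E_N$ inside $L(\beta)(f^{-N}(\beta))\subset L(f^{-(N+m)}(a))$ for a suitable $\beta\in f^{-m}(a)$, i.e.\ at depth $N+m$ of the tree over $a$. Adjoining a fixed finite field, generated by $f^{-m}(a)$ and finitely many post-critical points, to $L(f^{-N}(a))$ does not bring depth-$(N+m)$ information up to depth $N$. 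The level shift you correctly identified is restated, not removed.

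\textbf{What is missing} is control on how much the constant field can grow across $m$ consecutive levels.
\begin{itemize}
\item Lemma \ref{lemma gen preimages and all alg preimages} gives $E_{n-m}\subset L(f^{-n}(a))$.
\item For the Galois field $A=E_n\cap L(f^{-n}(a))$ one has $[E_n\cap M:E_n\cap M\cap A]=[(E_n\cap M)A:A]\le[E_n:A]$.
\item Hence it would suffice to bound $[E_n:E_{n-m}]$ independently of $n$.
\end{itemize}
This is where the paper invokes Corollary \ref{cor roots of unity} together with Lemma \ref{lemma gen preimages and all alg preimages} to get its constant $d^m$. By Fried's theorem applied to $f^n$, $E_n\subset L(\zeta(d^n))$, so all constants lie in a cyclotomic field, and $[L(\zeta(d^n)):L(\zeta(d^{n-m}))]\le d^m$. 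Making this airtight still requires comparing $E_n\cap L(\zeta(d^{n-m}))$ with $E_{n-m}$. Your main-case argument never uses this cyclotomic information, and without it or some substitute there is no bound.

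\textbf{The finite case.} Your treatment of $|f^{-\infty}(a)|<\infty$ is unnecessary, and its level-wise claim $c=1$ is not right: there $L(f^{-n}(a))=L$ while $[E_n\cap M:L]$ can grow with $n$. The case is trivial anyway, because $G_{f,a}$ is finite while $G_f$ is infinite, so both indices are $\infty$.
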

\begin{proof}
Let $G_f$ and $G_{f,a}$  denote the generic and specialized PFIMG over $L$. We write $H_f$ and $H_{f,a}$ for the generic and specialized PFIMG over $M$, since they can be identified with subgroups of $G_f$ and $G_{f,a}$. First notice that if $|f^{-\infty}(a)|<\infty$ then $[G_f:G_{f,a}]=[H_f: H_{f,a}]=\infty$ trivially. Hence we can assume $|f^{-\infty}(a)|=\infty$. Denote $M_{t,n}:=M\cap L(f^{-n}(t))$ and $M_{a,n}:=M\cap L(f^{-n}(a))$. Corollary \ref{cor roots of unity} and Lemma \ref{lemma gen preimages and all alg preimages} imply that for some $m\geq 1$ and all $n\geq m$, we have $M_{n,t}M_{a,n}= M_{a,n}(\zeta(d^{\ell}))$ for some $\ell\leq n+m$. We also know $\zeta(d^{\ell})\in M_{a, n+m}$. Since $m$ is independent of $n$, we can write
\begin{equation}\label{eq gp index ratio bd}
\frac{[G_f: G_{f,a}]}{[H_f:H_{f,a}]}=\frac{[M\cap L(f^{-n}(t)):L]}{[M\cap L(f^{-n}(a)):L]}\leq d^m.
\end{equation}
Note the equality comes from Definition \ref{Group Index Def} and the Galois fixed field correspondence. Observe that the leftmost and rightmost sides of Equation \eqref{eq gp index ratio bd} give us 
\begin{equation}\label{eq gp index lower bd}
\frac{1}{d^m}[G_f: G_{f,a}]\leq [H_f: H_{f,a}].
\end{equation}
Clearly Equation \eqref{eq gp index lower bd} finishes the proof, letting $c=d^{-m}$.
\end{proof}
Considering Corollary \ref{cor roots of unity} and Proposition \ref{Prop for main thm}, it is unsurprising that we need information on how minimal Galois extensions over a base field $L$ lift as we adjoin roots of unity. In order to obtain the desired proposition, we analyze properties of $\text{Gal}(L_d/L)$. First we give a description of the structure of the unit group  $\mathbb{Z}^\star_p$. In order to do so, we require certain identifications due to Jurgen Neukirch in \cite{p-adicunitgp}.
\begin{lemma}[Proposition 5.5 in \cite{p-adicunitgp}]\label{lemma unit gp iso to subgps of Zp} Let $K|\mathbb{Q}_p$ be a $\mathfrak{p}$-adic number field with valuation ring $\mathcal{O}$ and maximal ideal $\mathfrak{p}$, and let $p\mathcal{O}=\mathfrak{p}^e$. Let $U^{(n)}=1+\mathfrak{p}^n$. Then the power series for $\exp (x)$ and $\log (1+xz)$ yield, for $n>e/(p-1)$, two mutually inverse isomorphisms and homeomorphisms 
$\begin{tikzcd}
  \mathfrak{p}^n \arrow[r, shift left, "\exp"] & U^{(n)} \arrow[l, shift left, "\log"]
\end{tikzcd}$.
\end{lemma}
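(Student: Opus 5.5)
This is the standard result that $\exp$ and $\log$ are mutually inverse isomorphisms between $\mathfrak{p}^n$ and $U^{(n)}$ once $n>e/(p-1)$. The plan is to show that both series converge on the relevant sets and map each set into the other, and then to get the homomorphism and inverse properties from the formal identities of power series.

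\emph{Convergence and mapping properties.} Let $v$ be the normalized valuation with $v(p)=e$. Legendre's formula gives $v_p(m!)=(m-s_p(m))/(p-1)\le (m-1)/(p-1)$, so $v(m!)\le e(m-1)/(p-1)$. For $x\in\mathfrak{p}^n$ and $m\ge 2$,
\[
v\!\left(\frac{x^m}{m!}\right)\ \ge\ mn-\frac{e(m-1)}{p-1}\ =\ n+(m-1)\Bigl(n-\frac{e}{p-1}\Bigr).
\]
Since $n>e/(p-1)$, this exceeds $n$ and tends to infinity. Hence $\exp(x)$ converges and $\exp(x)-1-x\in\mathfrak{p}^{n+1}$, so $\exp(x)\in U^{(n)}$.

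For $\log(1+z)=\sum_{m\ge1}(-1)^{m+1}z^m/m$ with $z\in\mathfrak{p}^n$, use $v(m)=e\,v_p(m)\le e\log_p m$. Then $v(z^m/m)\ge mn-e\,v_p(m)$, which tends to infinity, so the series converges. To see that $\log(1+z)-z\in\mathfrak{p}^{n+1}$, write $m=p^a u$ with $p\nmid u$ and $m\ge 2$. If $a=0$ the term has valuation at least $2n>n$. If $a\ge1$, then $m\ge p^a$ and $p^a-1\ge a(p-1)$ give $(m-1)n\ge a(p-1)n>ae$, so $v(z^m/m)>n$. Therefore $\log(U^{(n)})\subset\mathfrak{p}^n$.

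\emph{Algebraic identities.} Over $\mathbb{Q}[[X,Y]]$ we have the formal identities $\exp(X+Y)=\exp(X)\exp(Y)$, $\log((1+X)(1+Y))=\log(1+X)+\log(1+Y)$, $\exp(\log(1+X))=1+X$ and $\log(\exp(X))=X$. The main obstacle is justifying the substitution of $p$-adic values into these identities, especially for compositions, where rearranging double series requires absolute convergence. In a non-archimedean complete field this reduces to the terms tending to $0$ uniformly.

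For the composition $\exp\circ\log$, expand $\sum_k \frac{1}{k!}\bigl(\sum_m c_m z^m\bigr)^k$. Each monomial contributing to $z^N$ has valuation at least $N\cdot(\text{a bound tending to }\infty)$ by the estimates above, because $v(c_m z^m)\ge n+(m-1)\delta'$ with $\delta'>0$ coming from the strict inequality. This gives convergence of the double series, so regrouping is legitimate and yields $1+z$; the same argument gives $\log\circ\exp=\mathrm{id}$. For the product identity, $\sum_{i,j}x^iy^j/(i!j!)$ converges absolutely by the same bound, so the Cauchy product gives $\exp(x+y)=\exp(x)\exp(y)$.

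\emph{Conclusion.} $\exp$ is a bijective homomorphism $\mathfrak{p}^n\to U^{(n)}$ with inverse $\log$. Continuity of each map follows because the estimate above gives $v(\exp(x)-\exp(y))=v(x-y)$, so both maps are isometries and hence homeomorphisms.
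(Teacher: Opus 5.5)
Your proposal is correct and is essentially the standard proof from the cited source (Neukirch, Prop.\ II.5.5), which the paper quotes without proving. The steps match: Legendre's formula gives $v(x^m/m!)\ge n+(m-1)\bigl(n-\frac{e}{p-1}\bigr)$, the analogous bound holds for $z^m/m$, and the formal power-series identities then transfer to values.

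One small wording fix in the composition step. A term of total degree $N$ has valuation at least $n+(N-1)\bigl(n-\frac{e}{p-1}\bigr)$, because the $1/k!$ or $1/k$ factor is absorbed by the same Legendre bound. This bound is linear in $N$, not ``$N$ times a bound tending to $\infty$,'' but linear growth is all that summability needs.
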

Over $\mathbb{Q}_p$ we have $\mathfrak{p}=p\mathbb{Z}_p$  and $\mathcal{O}=\mathbb{Z}_p$ in Lemma \ref{lemma unit gp iso to subgps of Zp}, meaning $e=1$. Hence $U^{(n)}\cong \mathfrak{p}^n$ as groups for all $n>(p-1)^{-1}$. In particular, Lemma \ref{lemma unit gp iso to subgps of Zp} implies that $U^{(1)}\cong \mathfrak{p}$ whenever $p$ is an odd prime.
\begin{lemma}\label{lemma p-adic units} Let $p$ be prime. If $p$ is odd, we have
\begin{equation}\label{eq p-adic unit semidirect prod odd p}
\mathbb{Z}_p^\star \cong \mathbb{Z}/(p-1)\mathbb{Z}\times \mathbb{Z}_p.
\end{equation}
If $p=2$, we have
\begin{equation}\label{eq p-adic unit semidirect prod p=2}
\mathbb{Z}_2^\star \cong \mathbb{Z}/2\mathbb{Z\times \mathbb{Z}}_2.
\end{equation}
\end{lemma}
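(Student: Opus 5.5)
The plan is to split $\mathbb{Z}_p^\star$ into its torsion part and its pro-$p$ principal units, using the reduction map together with Lemma \ref{lemma unit gp iso to subgps of Zp}. First I will use the exact sequence
\[
1\rightarrow U^{(1)}\rightarrow \mathbb{Z}_p^\star\rightarrow (\mathbb{Z}/p\mathbb{Z})^\star\rightarrow 1,
\]
where the second map is reduction modulo $p$. The quotient $(\mathbb{Z}/p\mathbb{Z})^\star$ is cyclic of order $p-1$.

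For odd $p$, I will show that the sequence splits. By Hensel's lemma applied to $x^{p-1}-1$, whose roots mod $p$ are simple, $\mathbb{Z}_p$ contains the $(p-1)$-th roots of unity $\mu_{p-1}$, and these map isomorphically onto $(\mathbb{Z}/p\mathbb{Z})^\star$. The group is abelian, so this gives $\mathbb{Z}_p^\star\cong \mu_{p-1}\times U^{(1)}$. Next, the remark after Lemma \ref{lemma unit gp iso to subgps of Zp} (with $e=1$ and $1>1/(p-1)$) gives $U^{(1)}\cong p\mathbb{Z}_p$. Multiplication by $p^{-1}$ is an isomorphism of additive topological groups $p\mathbb{Z}_p\cong\mathbb{Z}_p$. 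Together these yield \eqref{eq p-adic unit semidirect prod odd p}.

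For $p=2$, the condition $n>e/(p-1)=1$ forces $n\geq 2$, so I will work with $U^{(2)}=1+4\mathbb{Z}_2$ instead of $U^{(1)}$. First, $\mathbb{Z}_2^\star=U^{(1)}$, since every $2$-adic unit is $\equiv 1 \bmod 2$. Reduction mod $4$ then gives $U^{(1)}/U^{(2)}\cong\{\pm1\}\subset(\mathbb{Z}/4\mathbb{Z})^\star$. This sequence splits via the subgroup $\{\pm1\}\subset\mathbb{Z}_2^\star$, because $-1\notin U^{(2)}$ and $-1$ has order $2$. Hence $\mathbb{Z}_2^\star\cong\{\pm1\}\times U^{(2)}$. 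By Lemma \ref{lemma unit gp iso to subgps of Zp}, $U^{(2)}\cong 4\mathbb{Z}_2\cong\mathbb{Z}_2$, which gives \eqref{eq p-adic unit semidirect prod p=2}.

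I expect the main point needing care to be the $p=2$ case. There the logarithm is not an isomorphism on $U^{(1)}$, because $\log(-1)=0$, so the $2$-torsion has to be split off by hand before the lemma can be applied. The only other thing to check is that all the isomorphisms are topological, which holds since they are continuous bijections between compact Hausdorff groups.
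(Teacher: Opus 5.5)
Your proposal is correct and follows essentially the same route as the paper. You split the reduction sequence (over $U^{(1)}$ via Hensel lifts for odd $p$, over $U^{(2)}$ via $\{\pm 1\}$ for $p=2$), use commutativity to get a direct product, and identify the principal units with $\mathbb{Z}_p$ through Lemma \ref{lemma unit gp iso to subgps of Zp}, with your explicit handling of the $n>e/(p-1)$ threshold at $p=2$ and of topological isomorphisms being small clarifications of what the paper does.
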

\begin{proof}
First we prove the case where $p$ is odd. Defining $U^{(n)}$ as in Lemma \ref{lemma unit gp iso to subgps of Zp}, observe that we have the following short exact sequence (using the natural inclusion and projection maps).
\begin{equation}\label{eq short exact seq odd p}
    0\rightarrow U^{(1)}\rightarrow \mathbb{Z}_p^\star \rightarrow (\mathbb{Z}/p\mathbb{Z})^\star\rightarrow 0
\end{equation}
Let $i$ be the inclusion map of $(\mathbb{Z}/p\mathbb{Z})^\star$ into $\mathbb{Z}_p^\star$ (which is well defined by Hensel's Lemma). If we denote the projection map from $\mathbb{Z}_p^\star $ to $(\mathbb{Z}/p\mathbb{Z})^\star$ by $f$, notice that $f\circ i$ is the identity map on $(\mathbb{Z}/p\mathbb{Z})^\star$. Hence the short exact sequence in Equation \eqref{eq short exact seq odd p} splits, giving us $\mathbb{Z}_p^\star\cong (\mathbb{Z}/p\mathbb{Z})^\star\ltimes U^{(1)}$. Since both factor groups are contained in the abelian group $\mathbb{Z}_p^\star$, the semidirect product reduces to a standard direct product. Clearly $(\mathbb{Z}/p\mathbb{Z})^\star\cong \mathbb{Z}/(p-1)\mathbb{Z}$. By Lemma \ref{lemma unit gp iso to subgps of Zp}, we get Equation \eqref{eq p-adic unit semidirect prod odd p}.\\

When $p=2$, the argument is essentially the same. We start with a short exact sequence similar to that in Equation \eqref{eq short exact seq odd p}.
\begin{equation}\label{eq short exact seq p=2}
0\rightarrow U^{(2)}\rightarrow \mathbb{Z}_2^\star \rightarrow (\mathbb{Z}/4\mathbb{Z})^\star\rightarrow 0
\end{equation}
Letting $j$ be the lift of $(\mathbb{Z}/4\mathbb{Z})^\star$ into $\mathbb{Z}_2^\star$ and $g$ the projection map of $\mathbb{Z}_2^\star$ to $(\mathbb{Z}/4\mathbb{Z})^\star$, again $g\circ j$ is the identity map. Hence the short exact sequence in Equation \eqref{eq short exact seq p=2} splits, giving us $\mathbb{Z}_2^\star\cong (\mathbb{Z}/4\mathbb{Z})^\star\ltimes  U^{(2)}$. Again, the semidirect product reduces to a regular direct product. Clearly $(\mathbb{Z}/4\mathbb{Z})^\star\cong \mathbb{Z}/2\mathbb{Z}$. Lemma \ref{lemma unit gp iso to subgps of Zp} gives us Equation \eqref{eq p-adic unit semidirect prod p=2}, thus establishing the result.
\end{proof}
\begin{lemma}\label{lemma cyclotomic extensions finite index}
Given a field $L$ and $d>1$ such that $\text{char}(L)\nmid d$ and $L_d/L$ is infinite, let $A=\{p_i\}_{i=1}^j$ be the subset of the collection of distinct prime divisors of $d$ such that for each $i$ we have $L(\zeta({p_i}^\infty))\not\subset  L(\zeta(p_1^\infty), \cdots , \zeta(p_{i-1}^\infty))$. If $A\neq \emptyset$, we have an embedding $\phi:\text{Gal}(L_d/L)\xhookrightarrow{} \prod_{i=1}^j \mathbb{Z}_{p_i}^\star$ such that the image of $\text{Gal}(L_d/L)$ has finite index in $\prod_{i=1}^j \mathbb{Z}_{p_i}^\star$.
\end{lemma}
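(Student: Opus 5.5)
We will build the embedding from the cyclotomic characters and then compare indices. Since $\text{char}(L)\nmid d$, every prime $p_i$ dividing $d$ differs from $\text{char}(L)$, so the $p_i^n$-th roots of unity are separable and $L_d=L(\zeta(p_1^\infty),\dots,\zeta(p_r^\infty))$, where $p_1,\dots,p_r$ are all prime divisors of $d$. By the definition of $A$, each prime $p$ dividing $d$ but not in $A$ has $L(\zeta(p^\infty))$ contained in the compositum of the $L(\zeta(p_i^\infty))$ with $p_i\in A$ and smaller index. Inducting in the chosen order then gives $L_d=L(\zeta(p_1^\infty),\dots,\zeta(p_j^\infty))$.

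For each $i$, take the $p_i$-adic cyclotomic character $\chi_i:\text{Gal}(L(\zeta(p_i^\infty))/L)\to\mathbb{Z}_{p_i}^\star$. It is the inverse limit over $n$ of the injective maps $\text{Gal}(L(\zeta(p_i^n))/L)\hookrightarrow(\mathbb{Z}/p_i^n\mathbb{Z})^\star$, given by $\tau(\zeta)=\zeta^{a}$. It is therefore a continuous injective homomorphism. Define $\phi(\tau)=(\chi_i(\tau|_{L(\zeta(p_i^\infty))}))_{i=1}^j$. This map is injective, because an element acting trivially on each $L(\zeta(p_i^\infty))$ acts trivially on their compositum $L_d$.

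Next we show the image has finite index. Since the groups are compact, it suffices to show the image is open. By Lemma \ref{lemma p-adic units}, $\prod_i\mathbb{Z}_{p_i}^\star$ contains an open subgroup $U\cong\prod_i\mathbb{Z}_{p_i}$, taking $U^{(1)}$ or $U^{(2)}$ in each factor. It is then enough to show that $\text{im}(\phi)\cap U$ has finite index in $U$.

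For each single factor, the image of $\chi_i$ is a closed subgroup of $\mathbb{Z}_{p_i}^\star$. Its intersection with $U^{(1)}\cong\mathbb{Z}_{p_i}$ (or $U^{(2)}$ when $p_i=2$) is a closed subgroup of $\mathbb{Z}_{p_i}$, so it is either $0$ or $p_i^{e}\mathbb{Z}_{p_i}$. It cannot be $0$: otherwise the image would be finite, making $L(\zeta(p_i^\infty))/L$ finite, which contradicts the minimality condition defining $A$. Indeed $p_i\in A$ forces $L(\zeta(p_i^\infty))/L$ to be infinite, and here we should check that $A$ only includes such primes, or else add that assumption when $L_d/L$ is infinite. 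So each $\chi_i$ has open image.

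The main obstacle is passing from "open in each factor" to "open in the product", since the image of $\phi$ could a priori be a diagonal-type subgroup. We plan to handle this using the pro-$p_i$ parts. Let $\Gamma\le\text{im}(\phi)$ be the intersection of $\text{im}(\phi)$ with $\prod_i U_i$; it is open in $\text{im}(\phi)$ and is a closed subgroup of $\prod_i\mathbb{Z}_{p_i}$. Since the $p_i$ are distinct, any closed subgroup of $\prod_i\mathbb{Z}_{p_i}$ equals the product of its projections: its $p_i$-Sylow part is its projection to the $i$-th factor, obtained by multiplying by suitable supernatural idempotents. Hence $\Gamma=\prod_i\pi_i(\Gamma)$. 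Each $\pi_i(\Gamma)$ has finite index in $\pi_i(\text{im}\phi)\cap U_i$, which is nonzero and hence open. Therefore $\Gamma$ is open in $\prod_i\mathbb{Z}_{p_i}^\star$, and so $\text{im}(\phi)$ has finite index.
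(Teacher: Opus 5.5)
The gap is the step you flag yourself: ``It cannot be $0$ \dots\ contradicts the minimality condition defining $A$.'' Membership $p_i\in A$ only says $L(\zeta(p_i^\infty))\not\subset L(\zeta(p_1^\infty),\dots,\zeta(p_{i-1}^\infty))$. That condition also holds when $L(\zeta(p_i^\infty))/L$ is finite but nontrivial, and then $\chi_i$ has finite image.

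This cannot be patched, because the lemma as stated is false. Take $L=\mathbb{Q}(\zeta(3^\infty))\cap\mathbb{R}$ and $d=6$. In $\Gal(\mathbb{Q}(\zeta(6^\infty))/\mathbb{Q})\cong\mathbb{Z}_2^\star\times\mathbb{Z}_3^\star$, the field $L$ is fixed by $\mathbb{Z}_2^\star\times\{\pm1\}$. Hence:
\begin{itemize}
\item $\Gal(L_6/L)\cong\mathbb{Z}_2^\star\times\{\pm1\}$ and $L_6/L$ is infinite, while $L(\zeta(3^\infty))=L(\zeta(3))$ is only quadratic over $L$.
\item $L(\zeta(2^\infty))$ and $L(\zeta(3^\infty))$ are fixed by $\{1\}\times\{\pm1\}$ and $\mathbb{Z}_2^\star\times\{1\}$ respectively. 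Both are nontrivial over $L$ and neither contains the other, so $A=\{2,3\}$ in either ordering.
\end{itemize}
Your $\phi$ then has image $\mathbb{Z}_2^\star\times\{\pm1\}$, of infinite index in $\mathbb{Z}_2^\star\times\mathbb{Z}_3^\star$. No continuous embedding can do better: $\Gal(L_6/L)$ has trivial $3$-Sylow subgroup, but every open subgroup of the target contains some $\{1\}\times(1+3^n\mathbb{Z}_3)$.

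The paper's proof rests on the same unstated assumption. It asserts that $L_\zeta=\prod_i L(\zeta(p_i))$ ``contains only $\zeta(p_i^{\ell_i})$'' for a finite $\ell_i$. In the example above this fails, since $L_\zeta=L(\zeta(3))$ contains all $3$-power roots of unity.

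So the right repair is the one you float at the end: assume $L(\zeta(p_i^\infty))/L$ is infinite for every $p_i\in A$. With that hypothesis your argument is complete.

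Your route to the product differs from the paper's. You use the fact that a closed subgroup of $\prod_i\mathbb{Z}_{p_i}$, for distinct $p_i$, is the product of its projections. The paper instead works field-theoretically over $L_\zeta$: it argues that the only subextensions of each $p_i$-tower there are cyclotomic layers, and that the overlaps $L(\zeta(p_i^\infty))\cap L_\zeta L_i$ are finite layers $L_\zeta(\zeta(p_i^{n_i}))$. Both approaches rest on pro-$p$ and pro-$q$ pieces being unable to interact, but yours is cleaner.

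Your argument also gives something unconditionally. Without any infinitude hypothesis, it shows the image of $\Gal(L_d/L)$ has finite index in $\prod_i\Gal(L(\zeta(p_i^\infty))/L)$. That product is topologically finitely generated, since each factor is finite or open in $\mathbb{Z}_{p_i}^\star$. This is all that Lemma~\ref{lemma cyclotomic galois group has fin many bd subgp} actually needs, so the downstream results survive even though the lemma as stated does not.
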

\begin{proof}
Let $L_1:=L$ and $L_i:= L(\zeta({p_1^\infty})), \cdots , \zeta({p_{i-1}^\infty}))$ for $2\leq i\leq j$. Notice $\text{Gal}(L_d/L)\xhookrightarrow{} \prod_{i=1}^j \Gal (L(\zeta(p_i^\infty))/L)$. Let $L_\zeta:=\prod_{i=1}^j L(\zeta(p_i))$. Then $\Gal (L(\zeta(p_i^\infty))/L_\zeta)\xhookrightarrow{}  1+p_i\mathbb{Z}_{p_i}^\star$ for odd $p_i$ or $1+4\mathbb{Z}_2^\star$ when $p_i=2$. This implies all subextensions of $L(\zeta(p_i^\infty))/L_\zeta$ are cyclotomic. Hence for each $i$ we have $L(\zeta({p_i^\infty}))\cap L_\zeta L_i=L_\zeta (\zeta({p_i^{n_i}}))$ for some positive integer $n_i$. Letting $m_i:=[L_\zeta (\zeta(p_i^{n_i})):L_\zeta]$, we see the image of $\text{Gal}(L_d/L_\zeta)$ has index $\prod_{i=1}^j m_i$ in $\prod_{i=1}^j \Gal (L(\zeta(p_i^\infty))/L)$.\\

Moreover, $\Gal (L(\zeta(p_i^\infty))/L)\xhookrightarrow{} \mathbb{Z}_{p_i}^\star$. By assumption, $L_\zeta$ contains only $\zeta(p_i^{\ell_i})$ for some $1\leq \ell_i\leq n_i$. The image of $\Gal (L_\zeta(\zeta(p_i^\infty))/L_\zeta)$ is isomorphic to $1+p_i^{\ell_i}\mathbb{Z}_{p_i}$ for odd $p_i$ or $1+2^{\ell_i+1}\mathbb{Z}_{2}$ when $p_i=2$. This has index $(p-1)p_i^{\ell_i-1}$ (or $2^{\ell_i+1}$ when $p_i=2$) in $\mathbb{Z}_{p_i}^\star$. As $\Gal (L_\zeta(\zeta(p_i^\infty))/L_\zeta)$ can be identified with a subgroup of $\Gal (L(\zeta(p_i^\infty))/L)$, we have that the latter also has image of finite index in $\mathbb{Z}_{p_i}^\star$. This in turn  shows the image of $\text{Gal}(L_d/L_\zeta)$ has finite index in $\prod_{i=1}^j \Gal (L(\zeta(p_i^\infty))/L)$. As $\text{Gal}(L_d/L_\zeta)$ is a subgroup of $\text{Gal}(L_d/L)$, the claim follows.
\end{proof}

Now we require only a basic fact about topologically finitely generated groups in order to later obtain a key proposition.
\begin{lemma}\label{lemma fin gen=fin many open subgp}
Let $m$ be a positive integer. If $G$ is a topologically finitely generated group, then $G$ has only finitely many open subgroups $H$ such that $[G:H]\leq m$.
\end{lemma}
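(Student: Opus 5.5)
The plan is to reduce to a statement about homomorphisms into finite groups. Suppose $G$ is topologically generated by $g_1,\dots,g_r$. Take an open subgroup $H\le G$ with $[G:H]\le m$. Then $G$ acts continuously on the finite coset space $G/H$ by left multiplication. This action gives a continuous homomorphism $\rho_H: G\to \text{Sym}(G/H)$. Choose a bijection $G/H\to\{1,\dots,[G:H]\}$ that sends the coset $H$ to $1$. Then $\rho_H$ becomes a continuous homomorphism $G\to S_{n}$ with $n=[G:H]\le m$, and $H$ is recovered as the stabilizer of the point $1$.

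So the map $H\mapsto(n,\rho_H)$, with the bijection fixed as above, sends the set of open subgroups of index at most $m$ into the set of pairs $(n,\rho)$. Here $n\le m$ and $\rho:G\to S_n$ is continuous. This map is injective, because $H=\rho^{-1}(\text{Stab}_{S_n}(1))$. It therefore suffices to show there are only finitely many such pairs.

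For each fixed $n\le m$, a continuous homomorphism $\rho:G\to S_n$ is determined by the tuple $(\rho(g_1),\dots,\rho(g_r))\in S_n^r$. The key point is that two continuous homomorphisms into the discrete finite group $S_n$ which agree on the generators agree on the abstract subgroup $\langle g_1,\dots,g_r\rangle$. That subgroup is dense in $G$. The set where two continuous maps into a Hausdorff (discrete) space agree is closed, so the two homomorphisms agree on all of $G$.

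Hence there are at most $|S_n|^r=(n!)^r$ homomorphisms for each $n$. Summing over $n\le m$ bounds the number of open subgroups of index at most $m$ by $\sum_{n=1}^m (n!)^r<\infty$.

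The only step needing care is continuity of $\rho_H$. The kernel of the action, $\bigcap_{g}gHg^{-1}$, is an intersection of finitely many conjugates because $[G:H]<\infty$. Each conjugate is open, so the kernel is open and $\rho_H$ is continuous. Nothing deeper seems required; in particular the argument does not use that $G$ is profinite, only that $G$ is a topological group with a dense finitely generated subgroup.
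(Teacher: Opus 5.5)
Your proposal is correct and takes the same route as the paper: let $G$ act on $G/H$ to get a continuous homomorphism into a symmetric group, note it is determined by the images of the topological generators, and count $(n!)^r$ possibilities. Your version is slightly more careful in two places the paper treats as clear. Labelling the cosets so that $H$ is the stabilizer of $1$ is what makes the map $H\mapsto\rho_H$ injective. The density and closed-equalizer argument is what justifies saying $\rho$ is determined by its values on the generators.
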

\begin{proof}
Fix the value of $r\leq m$. Let $H\leq G$ be open and satisfy $[G:H]\leq r$. Clearly $G$ permutes the left cosets of $H$ by left multiplication. Hence we get a well defined map $\phi: G\rightarrow S_r$ where $S_r$ is the permutation group on $r$ objects. Letting $g_1H$ and $g_2H$ be two fixed cosets, notice that $\{x\in G \ | \ xg_1H=g_2H\}=\theta^{-1}(H)$ where $\theta(x)=g_2^{-1}xg_1$, which is open in $G$. As the preimage of any element of $S_r$ is the finite intersection of such sets, one readily sees that $\phi$ is continuous when $S_r$ has the discrete topology.\\

Clearly distinct subgroups will determine distinct $\phi$ maps. Hence it suffices to bound the possible number of such $\phi$ maps. Let $\{\alpha_i\}_{i=1}^s$ be the finite set of topological generators for $G$. Since $\phi$ is continuous, it is completely determined by the values $\{\phi(\alpha_i)\}_{i=1}^s$. As there are $r!$ choices of $\phi(\alpha_i)$ for each $i$, we have only $(r!)^s$ possible choices for $\phi$. Hence there are only finitely many choices of open subgroups $H\leq G$ with index $r\leq m$, finishing the proof.
\end{proof}
We now combine the last several results into one key claim.
\begin{lemma}\label{lemma cyclotomic galois group has fin many bd subgp}
Let $L$ be a field and $d>1$ such that $\text{char}(L)\nmid d$. Then $\text{Gal}(L_d/L)$ has only finitely many open subgroups $H$ of bounded index.
\end{lemma}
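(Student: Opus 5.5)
The plan is to reduce the claim to Lemma \ref{lemma fin gen=fin many open subgp}: it suffices to show that $\text{Gal}(L_d/L)$ is topologically finitely generated. Once this is established, for any bound $m$ there are only finitely many open subgroups $H$ with $[\text{Gal}(L_d/L):H]\leq m$, which is exactly the statement.

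First dispose of the case where $L_d/L$ is finite. Then $\text{Gal}(L_d/L)$ is a finite group, so it has only finitely many subgroups of any kind. Likewise, if the set $A$ of Lemma \ref{lemma cyclotomic extensions finite index} is empty, then $L_d=L$ and the group is trivial.

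Assume next that $L_d/L$ is infinite. Lemma \ref{lemma cyclotomic extensions finite index} gives an embedding
\[\phi:\text{Gal}(L_d/L)\hookrightarrow P:=\prod_{i=1}^j \mathbb{Z}_{p_i}^\star\]
whose image has finite index. Since $\text{Gal}(L_d/L)$ is profinite and $P$ is Hausdorff, $\phi$ is a homeomorphism onto a closed subgroup of $P$. By Lemma \ref{lemma p-adic units}, $P$ is isomorphic to $C\times\prod_{i=1}^j\mathbb{Z}_{p_i}$ with $C$ a finite abelian group. Hence $P$ is topologically generated by finitely many elements: generators of $C$ together with $1\in\mathbb{Z}_{p_i}$ for each $i$. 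A closed subgroup of finite index in a topologically finitely generated profinite group is itself open and topologically finitely generated; this is the profinite Schreier argument. Alternatively, one can argue directly for this abelian group. A closed subgroup of $\prod\mathbb{Z}_{p_i}$ is a product of closed subgroups $p_i^{e_i}\mathbb{Z}_{p_i}$ or $0$, each procyclic. Then the finite-index closed subgroup of $C\times\prod\mathbb{Z}_{p_i}$ is generated by its intersection with $\prod\mathbb{Z}_{p_i}$, which is finitely generated, together with finitely many coset representatives. Either way, $\text{Gal}(L_d/L)$ is topologically finitely generated, and Lemma \ref{lemma fin gen=fin many open subgp} finishes the proof.

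The main obstacle is the claim that a finite-index closed subgroup of $P$ is topologically finitely generated. The abelian route is the one to take, because it avoids invoking the profinite Schreier formula. Every closed subgroup $H$ of $\prod_{i}\mathbb{Z}_{p_i}$ decomposes into its $p_i$-primary parts, since the group is a product of pro-$p_i$ groups for distinct primes. Each part is a closed subgroup of $\mathbb{Z}_{p_i}$, hence of the form $p_i^{e}\mathbb{Z}_{p_i}$ or $0$, and so is procyclic.

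If the primes $p_i$ are not distinct, the same argument applies after grouping equal primes: a closed subgroup of $\mathbb{Z}_p^r$ is a finitely generated $\mathbb{Z}_p$-module because $\mathbb{Z}_p$ is Noetherian. Then $H\cap\left(\prod\mathbb{Z}_{p_i}\right)$ is finitely generated. Adding representatives of its finitely many cosets in $H$, which are bounded by $|C|$, gives finitely many topological generators of $H$.

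Finally, $\text{Gal}(L_d/L)$ need not be topologically cyclic. This does not matter, because only finite generation is required.
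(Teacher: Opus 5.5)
Your proof is correct, but it uses finite generation at a different point than the paper does. The paper shows that the ambient group $P=\prod_{i=1}^j\mathbb{Z}_{p_i}^\star$ is topologically finitely generated and applies Lemma \ref{lemma fin gen=fin many open subgp} to $P$. It then transfers the conclusion back through the finite index $e$ of the image of $\text{Gal}(L_d/L)$: an open subgroup of index at most $m$ in the image is open of index at most $em$ in $P$. You instead show that the image itself is topologically finitely generated, and apply Lemma \ref{lemma fin gen=fin many open subgp} to $\text{Gal}(L_d/L)$ directly.

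The paper's route is shorter. Yours costs one extra structural step, but in its abelian form that step never uses finite index. For \emph{every} closed subgroup $H\leq P$, the group $H\cap\prod\mathbb{Z}_{p_i}$ is a finite product of procyclic groups, and $H/(H\cap\prod\mathbb{Z}_{p_i})$ embeds in $C$.

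This is a real advantage, because the finite-index assertion of Lemma \ref{lemma cyclotomic extensions finite index} can fail. Take $L$ to be the maximal real subfield of $\mathbb{Q}(\zeta(2^\infty))$ and $d=6$. Then $L_6/L$ is infinite, coming from the $3$-power roots of unity. Also $L(\zeta(2^\infty))=L(i)$, so $2\in A$. The image of $\text{Gal}(L_6/L)$ therefore lies in $\{\pm1\}\times\mathbb{Z}_3^\star$, which has infinite index in $\mathbb{Z}_2^\star\times\mathbb{Z}_3^\star$.

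In such cases the paper's reduction to $P$ breaks down. Your abelian argument still goes through, using only that $\text{Gal}(L_d/L)$ embeds, via cyclotomic characters, as a closed subgroup of $\prod_{p\mid d}\mathbb{Z}_p^\star$. So keep that route and drop the Schreier alternative, which is the one place your write-up genuinely relies on finite index.
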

\begin{proof}
Notice if  $L_d/L$ is finite, then the conclusion follows. Hence we assume that $L_d/L$ is infinite. By Lemma \ref{lemma cyclotomic extensions finite index}, it suffices to show that $\prod_{i=1}^j \mathbb{Z}_{p_i}^\star$ has only finitely many open subgroups of bounded index. Lemma \ref{lemma p-adic units} and the fact that $\mathbb{Z}_p$ is pro-cyclic imply that $\prod_{i=1}^j \mathbb{Z}_{p_i}^\star$ is topologically finitely generated. By Lemma \ref{lemma fin gen=fin many open subgp}, we are done.
\end{proof}
Lemma \ref{lemma cyclotomic galois group has fin many bd subgp} gives an important proposition.
\begin{proposition}\label{lemma min Gal ext lift over cyclotomic ext}
Let $L$ be a field, $d>1$, and $\{L_j/L\}_{j=1}^\infty$ be a set of distinct minimal Galois extensions. Then $L_j\subset L_d$ for only finitely many $j$. 
\end{proposition}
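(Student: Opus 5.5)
The plan is to use Galois theory to turn the statement into a count of open subgroups of $\text{Gal}(L_d/L)$, and then apply Lemma \ref{lemma cyclotomic galois group has fin many bd subgp}. That lemma requires a bound on the index, so most of the work is showing that every minimal Galois extension $L_j/L$ lying inside $L_d$ has degree bounded by a constant depending only on $d$.

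First, a reduction on the characteristic. If $\text{char}(L)=p>0$ divides $d$, write $d=p^e d'$ with $p\nmid d'$. In characteristic $p$ we have $x^{p^e}-1=(x-1)^{p^e}$, so the only $p$-power roots of unity are trivial and $L_d=L_{d'}$. We may therefore assume $\text{char}(L)\nmid d$; if $d'=1$ then $L_d=L$ and there is nothing to prove. If $L_d/L$ is finite, the Galois correspondence gives only finitely many subextensions, and we are done. So assume $L_d/L$ is infinite. Lemma \ref{lemma cyclotomic extensions finite index} then gives an embedding $\text{Gal}(L_d/L)\hookrightarrow \prod_{i=1}^j \mathbb{Z}_{p_i}^\star$, where the $p_i$ are primes dividing $d$. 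In particular $\text{Gal}(L_d/L)$ is abelian.

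Now suppose $L_j\subset L_d$. Since $L_j/L$ is Galois, $\text{Gal}(L_j/L)$ is a quotient of $\text{Gal}(L_d/L)$, so it is abelian. Minimality of $L_j/L$ means $\text{Gal}(L_j/L)$ has no nontrivial proper normal subgroup. In an abelian group every subgroup is normal, so $\text{Gal}(L_j/L)\cong\mathbb{Z}/q\mathbb{Z}$ for some prime $q$.

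Next we bound $q$. This is the step I expect to need the most care. By Lemma \ref{lemma p-adic units}, each $\mathbb{Z}_{p_i}^\star$ is $\mathbb{Z}/(p_i-1)\mathbb{Z}\times\mathbb{Z}_{p_i}$ (or $\mathbb{Z}/2\mathbb{Z}\times\mathbb{Z}_2$ when $p_i=2$). Hence every finite continuous quotient of a closed subgroup of $\prod_i\mathbb{Z}_{p_i}^\star$ has order divisible only by primes dividing $\prod_i p_i(p_i-1)$. To make this precise, I would argue that a continuous surjection $H\to\mathbb{Z}/q\mathbb{Z}$ from such a subgroup $H$ kills the open subgroup $H\cap\prod_i U_i$. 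Here $U_i$ is the pro-$p_i$ part $U^{(1)}$ (respectively $U^{(2)}$ when $p_i=2$). If $q$ is prime to every $p_i$, then $q$ must divide the order of $H/(H\cap\prod U_i)$, which embeds in $\prod_i(\mathbb{Z}/p_i\mathbb{Z})^\star$ (with $(\mathbb{Z}/4\mathbb{Z})^\star$ in place of the factor for $p_i=2$). Either way $q\le B:=\max_i p_i$, and hence $[G_{L}\!:\!\text{corresponding subgroup}]=[L_j:L]\le B$.

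Finally, we count. Distinct $L_j\subset L_d$ correspond under the Galois correspondence to distinct open subgroups of $\text{Gal}(L_d/L)$, and each of these has index at most $B$. By Lemma \ref{lemma cyclotomic galois group has fin many bd subgp}, which rests on Lemma \ref{lemma fin gen=fin many open subgp} and the topological finite generation of $\prod_i\mathbb{Z}_{p_i}^\star$, there are only finitely many such subgroups. Therefore $L_j\subset L_d$ for only finitely many $j$.
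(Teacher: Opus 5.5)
Your proof is correct, and it takes a more elementary route than the paper. The paper's one-line proof treats each $\text{Gal}(L_j/L)$ as an arbitrary finite simple group and bounds its order with the Babai--Goodman--Pyber theorem (Theorem \ref{Finite Simple Gp Bd}), then counts via Lemma \ref{lemma cyclotomic galois group has fin many bd subgp}. It leaves implicit where the required bound $r$ on the prime divisors comes from.

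You make that bound explicit from the structure of $\mathbb{Z}_p^\star$ (Lemma \ref{lemma p-adic units}). You then observe that, because $\text{Gal}(L_d/L)$ is abelian, every minimal Galois subextension of $L_d/L$ is cyclic of prime degree $q$. So bounding the prime already bounds the degree ($q\le\max_{p\mid d}p$), and the classification-dependent Theorem \ref{Finite Simple Gp Bd} is not needed at all. Your reduction $L_d=L_{d'}$ also covers $\text{char}(L)\mid d$, which the proposition allows but Lemma \ref{lemma cyclotomic galois group has fin many bd subgp} excludes, so your version is slightly more complete than the paper's.

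One further remark. From Lemma \ref{lemma cyclotomic extensions finite index} you use only the embedding (the cyclotomic character into $\prod_{p\mid d}\mathbb{Z}_p^\star$), not its finite-index assertion. That is fortunate, because the assertion can fail: for $L$ the maximal real subfield of $\mathbb{Q}(\zeta(3^\infty))$ and $d=6$, the image lies in $\mathbb{Z}_2^\star\times\{\pm1\}$, which has infinite index. In your abelian setting you can make the final count independent of it. An open subgroup of prime index $q$ in a closed subgroup $H\le\prod_{p\mid d}\mathbb{Z}_p^\star$ contains $\overline{qH}$, and $H/\overline{qH}$ is a finite $\mathbb{F}_q$-vector space, so there are only finitely many such subgroups for each of the finitely many primes $q$.
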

\begin{proof}
The result follows from Theorem \ref{Finite Simple Gp Bd}, Lemma \ref{lemma cyclotomic galois group has fin many bd subgp}, and  the Galois correspondence. 
\end{proof}
We next derive another simple result about lifting minimal Galois extensions by cyclotomic extensions. To do so, we first prove a related lifting property by Abelian extensions.
\begin{lemma}\label{lemma min gal ext lift over abelian ext}
Let $L$ be a field, $M/L$ be a minimal Galois extension, and $E/L$ be an abelian extension. Then either $M\subset E$ or $ME/E$ is a minimal Galois extension.
\end{lemma}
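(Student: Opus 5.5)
Work inside a fixed separable closure $\bar L$, so that $E$ and $M$ are subfields of $\bar L$ and the compositum $ME$ makes sense. Set $N=\text{Gal}(M/L)$; minimality of $M/L$ means exactly that $N$ is a nontrivial finite group with no nontrivial proper normal subgroup, so $N$ is simple. The plan is to compute $\text{Gal}(ME/E)$ by the usual restriction map and then split into cases according to whether $N$ is abelian.

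First, the restriction map $\text{Gal}(ME/E)\to \text{Gal}(M/M\cap E)$ is an isomorphism. This is the standard theorem on translating a Galois extension, and it uses only that $M/L$ is Galois. Next, $M\cap E$ is a subextension of the abelian extension $E/L$, so it is Galois over $L$. It is also a Galois subextension of $M/L$, so by minimality either $M\cap E=M$ or $M\cap E=L$. If $M\cap E=M$, then $M\subset E$ and we are in the first alternative.

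Otherwise $M\cap E=L$, and restriction gives $\text{Gal}(ME/E)\cong \text{Gal}(M/L)=N$. Here $ME/E$ is Galois and nontrivial, because $M\neq L$. It remains to see that $ME/E$ has no nontrivial proper Galois subextension. By the Galois correspondence over $E$, such subextensions correspond to proper nontrivial normal subgroups of $\text{Gal}(ME/E)$. This group is isomorphic to the simple group $N$, so it has none. Hence $ME/E$ is a minimal Galois extension.

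The only point needing care is the claim $M\cap E\in\{L,M\}$. It relies on $M\cap E$ being Galois over $L$, which is where the hypothesis that $E/L$ is abelian enters. Normality of every subextension of $E/L$ suffices, so $E/L$ merely Galois would do. A general intermediate field of $M/L$ need not be Galois, and this is precisely why that hypothesis is needed. I expect no serious obstacle beyond checking this and the simplicity of $N$.
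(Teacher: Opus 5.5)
Your proof is correct and follows the paper's argument essentially step for step: the restriction isomorphism $\text{Gal}(ME/E)\cong\text{Gal}(M/M\cap E)$, the fact that $M\cap E$ is Galois over $L$, and the dichotomy $M\cap E\in\{L,M\}$ forced by minimality. Your closing aside is right that $E/L$ merely Galois would suffice, but the reason is that an intersection of two Galois extensions of $L$ is Galois over $L$, not that every subextension of a Galois extension is normal (in general it need not be).
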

\begin{proof}
By the Galois correspondence, it suffices to show that $\Gal (ME/E)$ is simple. Recall that $\text{Gal}(ME/E)\cong \text{Gal} (M/(M\cap E))$. Notice that $(M\cap E)/L$ is Galois because $E/L$ is abelian. Since $M/L$ is a minimal Galois extension, we must have $M\cap E$ is either just $L$ or all of $M$. The first case gives that $\Gal (ME/E)\cong \Gal (M/L)$. The second case gives that $\Gal (ME/E)$ is trivial.
\end{proof}

At last, we can provide one more proposition that is essential to the proof of Theorem \ref{main theorem}.
\begin{proposition}\label{prop generic subgps descend+lift}
Let $L$ be a field, $M/L$ be separable, and $f\in L[x]$ a polynomial of degree $d>1$ such that $\text{char}(L)\nmid d$. If $G_f$ over $L$ has infinitely many open maximal normal subgroups, then $G_f$ over $M$ satisfies the same condition.
\end{proposition}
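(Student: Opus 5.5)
The plan is to realize $G_f$ over $M$ as a closed subgroup of $G_f$ over $L$ whose fixed field is a subextension of a constant field extension that is abelian over $L(t)$. We then push the minimal Galois extensions of $L(t)$ inside $\Omega:=L(f^{-\infty}(t))$ up to that fixed field.

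\emph{Step 1: identify $G_f$ over $M$.} By restriction,
\[
\text{Gal}(M(f^{-\infty}(t))/M(t))\cong \text{Gal}(\Omega/E), \qquad E:=\Omega\cap M(t),
\]
so $G_f$ over $M$ is the closed subgroup of $G_f$ over $L$ fixing $E$.

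\emph{Step 2: locate $E$.} Let $K:=\Omega\cap \bar L$. By Theorem \ref{Roots of unity}, applied iteratively as in Corollary \ref{cor roots of unity}, $K\subset L_d$. Since $M(t)\subset \bar L(t)$, we have $E\subset \Omega\cap \bar L(t)$. We claim $\Omega\cap\bar L(t)=K(t)$. This holds because $\Omega$ is a regular extension of its constant field $K$, so $\Omega$ and $\bar L(t)$ are linearly disjoint over $K(t)$. Hence
\[
L(t)\subset E\subset K(t)\subset L_d(t).
\]
Every subextension of the abelian extension $K(t)/L(t)$ is Galois and abelian. Therefore $E/L(t)$ is abelian Galois.

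\emph{Step 3: lift the minimal Galois extensions.} Open maximal normal subgroups of $\text{Gal}(\Omega/L(t))$ correspond to distinct minimal Galois extensions $\{M_j/L(t)\}_{j\ge1}$ inside $\Omega$, and there are infinitely many by hypothesis. Apply Lemma \ref{lemma min gal ext lift over abelian ext} to each $M_j$ with the abelian extension $E/L(t)$. Either $M_j\subset E$, or $M_jE/E$ is a minimal Galois extension inside $\Omega$.

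The first alternative forces $M_j\subset L_d(t)=L(t)_d$. By Proposition \ref{lemma min Gal ext lift over cyclotomic ext}, applied to the base field $L(t)$ (with $\text{char}(L)\nmid d$), this happens for only finitely many $j$. So infinitely many $M_j$ yield minimal Galois extensions $M_jE/E$ inside $\Omega$.

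\emph{Step 4: distinctness and conclusion.} Suppose infinitely many of the $M_jE$ coincided with a single field $N$. Then infinitely many distinct $M_j$ would lie in $N$. But $N$ is a finite extension of $L(t)$: it is finite over $E$, and $E\subset K(t)$, which needs checking. So $N$ has only finitely many intermediate fields, a contradiction.

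Hence we obtain infinitely many distinct minimal Galois extensions of $E$ inside $\Omega$. Their groups $\text{Gal}(\Omega/M_jE)$ are infinitely many open maximal normal subgroups of $\text{Gal}(\Omega/E)$, which is $G_f$ over $M$.

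\emph{Main obstacle.} I expect the delicate point to be Step 2 together with the finiteness in Step 4. Step 2 requires the regularity of $\Omega$ over its constant field $K$, and Step 4 requires ruling out collapsing when $E/L(t)$ is infinite. If $K(t)/L(t)$ is infinite, I would instead argue in Step 4 within the Galois group: distinct $M_j$ not contained in $E$ are pairwise linearly disjoint minimal extensions. Equal lifts $M_iE=M_jE$ would place $M_i$ inside the compositum $M_jE$. That compositum has Galois group over $L(t)$ a subgroup of $\text{Gal}(M_j/L(t))\times\text{Gal}(E/L(t))$, and minimality of $M_i$ together with the abelianness of $E$ then forces $M_i=M_j$ or $M_i\subset E$.
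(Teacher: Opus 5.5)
\paragraph{Overall.} Your Steps 1--3 follow the paper's route. The paper passes to $M_d=M\cap L_d$, discards the finitely many minimal extensions inside the cyclotomic extension via Proposition \ref{lemma min Gal ext lift over cyclotomic ext}, and lifts the rest with Lemma \ref{lemma min gal ext lift over abelian ext}. It then cites Corollary \ref{cor roots of unity} to identify $G_f$ over $M$ with $G_f$ over $M_d$. Your computation $E=\Omega\cap M(t)\subset K(t)\subset L_d(t)$ is a correct and more explicit version of that last identification.

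\paragraph{The gap: Step 4.} The paper's proof passes over distinctness silently, but it is genuinely needed, and neither of your arguments for it works.
\begin{itemize}
\item Your first argument needs $E/L(t)$ to be finite. That holds only when the relevant part of $K/L$ is finite, whereas Fried's theorem gives only $K\subset L_d$, which can be infinite over $L$.
\item Your fallback claim is that $M_iE=M_jE$ forces $M_i=M_j$ or $M_i\subset E$. This is true when $\Gal(M_j/L(t))$ is nonabelian simple. In that case a simple quotient of $\Gal(M_j/L(t))\times\Gal(E/L(t))$ either kills the first factor, giving $M_i\subset E$. Or the first factor surjects onto it, so it is nonabelian simple, hence centerless, so it kills the abelian second factor, giving $M_i\subset M_j$.
\item The fallback claim is false when $\Gal(M_j/L(t))$ is cyclic of prime order, which is exactly the case arising from infinite abelianization. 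Take $L=\mathbb{Q}$, $E=\mathbb{Q}(i)(t)$, $M_j=\mathbb{Q}(t,\sqrt{t})$ and $M_i=\mathbb{Q}(t,\sqrt{-t})$. These are distinct quadratic extensions, neither lies in $E$, and $M_iE=M_jE$.
\end{itemize}

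\paragraph{The fix.} What you need is that each lift has only finitely many preimages.
\begin{itemize}
\item Suppose $M_iE=N=M_jE$ with $M_i\not\subset E$. Then $M_i\cap E=L(t)$ by Lemma \ref{lemma min gal ext lift over abelian ext}, so $[M_i:L(t)]=[N:E]$.
\item Hence $M_i$ is the fixed field of an open subgroup $H$ of index $[N:E]$ in $\Gal(N/L(t))\cong\Gal(M_j/L(t))\times A$. Here $A=\Gal(E/L(t))$ is a quotient of $\Gal(L_d(t)/L(t))\cong\Gal(L_d/L)$.
\item Such an $H$ contains $1\times B$, where $B=\{a\in A:(1,a)\in H\}$ is open of index at most $[N:E]$ in $A$. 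By Lemma \ref{lemma cyclotomic galois group has fin many bd subgp}, applied through the quotient map, there are only finitely many such $B$.
\item For each $B$, $H$ is determined by a subgroup of the finite group $\Gal(M_j/L(t))\times A/B$.
\end{itemize}
So only finitely many $M_i$ share a given lift. The infinitely many $M_j\not\subset E$ therefore yield infinitely many distinct minimal Galois extensions $M_jE/E$ inside $\Omega$, which completes your argument.
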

\begin{proof}
Let $M_d:=M\cap L_d$. By the Galois correspondence and Proposition \ref{lemma min Gal ext lift over cyclotomic ext}, there is an infinite sequence $\{L_{j}\}_{j=1}^\infty$ such that $L_{j}\not\subset M_d$ for all $j$. As all cyclotomic extensions are abelian, Lemma \ref{lemma min gal ext lift over abelian ext} implies that $M_d L_{j}/M_d$ is a minimal Galois extension for every $j$. Hence $G_f$ over $M_d$ has infinitely many open maximal normal subgroups. By Corollary \ref{cor roots of unity}, we get that $G_f$ over $M$ does as well.
\end{proof}
Notice that Proposition \ref{prop generic subgps descend+lift} justifies our discussion in the first section comparing the statements of Corollary \ref{cor to main theorm roots of unity included} and Theorem \ref{main theorem}. Now we can justify Theorem \ref{main theorem} in detail.
\begin{proof}[Proof of Theorem \ref{main theorem}] Let $k$ be our given countable Hilbertian field. First notice if $|f^{-\infty}(a)|<\infty$, then we trivially have $[G_f:G_{f,a}]=\infty$ over any field where $f(x)$ is defined (since $|G_{f,a}|<\infty$ and $|G_f|=\infty$). Hence we can assume that $|f^{-\infty}(a)|=\infty$. Define $\mathcal{A}:=\{(F_i, f_i, a_i)\}_{i=1}^\infty$ to consist of the set of ordered field/polynomial/element tuples satisfying the following conditions.
\begin{enumerate}
\item $F_i/k$ is a finite separable extension.\label{Lemma: Fi finite}
\item $f_i\in F_i[x]$ and $a_i\in F_i$.\label{Lemma: fi in Fi}
\item $\deg (f_i)=d_i$ such that $\text{char}(k)\nmid d_i$.\label{Lemma: degrees}
\item $G_i:=G_{f_i}$ over $k_f$ has infinitely many open maximal normal subgroups.  \label{Lemma: main condition}
\item Denote $G_{i,a}:=G_{f_i, a_i}$. We have that ${[G_i:G_{i,a}]<\infty}$ over $F_i$.\label{Lemma: group index}
\end{enumerate}
Let $A$ be the set of $\sigma\in G_{k}$ such that $F^{\bar{\sigma}}$ is a pseudo-algebraically closed Hilbertian field. By Theorems \ref{almost every field is PAC} and \ref{Bary}, $\mu_{k}(A)=1$. If $\mathcal{A}=\emptyset$, set $A=S$ below.\\
 
If $\mathcal{A}$ is nonempty, Proposition \ref{prop generic subgps descend+lift} (or merely the fact that $F_i/k_f$ is finite) along with Conditions (\ref{Lemma: main condition}) and (\ref{Lemma: group index}) imply that $G_{i,a}$ over $F_i$ also has infinitely many open maximal normal subgroups. Denote these by $\{N_{i,j}\}_{j=1}^\infty$.  As each $N_{i,j}$ is maximal among all open normal subgroups, we know the quotient groups $\bar{N}_{i,j}:= G_{i,j}/N_{i,j}$ are simple. Definition \ref{Def Wreath Prod}, Theorem \ref{Subgps of Sd Wreath Prod}, and Condition (\ref{Lemma: degrees}) of $\mathcal{A}$ imply that each $|\bar{N}_{i,j}|$ divides some power of $(d_i)!$ dependent on $j$. Hence a prime divisor of $|\bar{N}_{i, j}|$ for any $j$ must divide $(d_i)!$. This allows us to substitute $\bar{N}_{i,j}$ for $G$ and $(d_i)!$ for $r$ in Theorem \ref{Finite Simple Gp Bd} to obtain a constant $C_i$ such that $|\bar{N}_{i,j}|\leq C_i$ for all $j$. Letting $L_{i,j}/F_i$ denote the minimal Galois fixed field extension corresponding to $N_{i,j}$, we have $[L_{i,j}: F_i]\leq C_i$ for all $j$. By Corollary \ref{cor roots of unity}, $k$ is Hilbertian. We apply Proposition \ref{Prop for main thm} to obtain the desired set $S$.\\

Given $\sigma\in S$, we know $F^{\bar{\sigma}}$ is a pseudo-algebraically closed Hilbertian field. Let $f\in F^{\bar{\sigma}} [x]$ satisfy the assumptions of the theorem. Assume the following for contradiction,
\begin{equation}\label{eq BWOC}
\exists \ a\in F^{\bar{\sigma}} \ \text{such that} \ [G_f: G_{f,a}]<\infty \ \text{over} \  F^{\bar{\sigma}}.    
\end{equation}
Let $K/k$ be the smallest extension containing $a$ over which $f(x)$ is defined.  Proposition \ref{prop group index bound} implies that $[G_f: G_{f,a}]$ over $K$ is also finite.\\

First consider the case that $\mathcal{A}=\emptyset$. Clearly Conditions \ref{Lemma: Fi finite} - \ref{Lemma: degrees} can always be satisfied. If Condition \ref{Lemma: main condition} is always false, then Theorem \ref{main theorem} is vacuously true. This situation will never occur due to our aforementioned example of $f(x)=x^2+a$ not PCF having infinite abelianization (and the fact all induced quadratic extensions are automatically Galois). Hence there will always be (infinitely many) tuples $(F_i, f_i, a_i)$ satisfying Conditions \ref{Lemma: Fi finite} through \ref{Lemma: main condition}. This means Condition \ref{Lemma: group index} must be false for any finite extension $F/k$ and $a\in F$. In particular, $[G_f: G_{f,a}]=\infty$ over $K$. By Proposition \ref{prop group index bound}, we have $[G_f: G_{f,a}]=\infty$ over $F^{\bar{\sigma}}$. The contradiction with Equation \eqref{eq BWOC} that we get finishes the proof in this case.\\

For the rest of the proof, we assume $\mathcal{A}$ is nonempty. Replacing $F_i$ with $K$, $f_i$ with $f$, and $a_i$ with $a$ in the definition of $\mathcal{A}$, Condition \ref{Lemma: Fi finite} holds simply because $F^{\bar{\sigma}}\subset k^{sep}$. Clearly Condition \ref{Lemma: fi in Fi} is true. The veracity of Conditions \ref{Lemma: degrees} and \ref{Lemma: main condition} are given by the assumptions of the theorem. Equation \eqref{eq BWOC} and Proposition \ref{prop group index bound} ensure that Condition \ref{Lemma: group index} is correct.\\

As $\mathcal{A}$ is the set of all such ordered tuples satisfying these conditions, clearly $(K, f, a)=(F_i, f_i, a_i)$ for some $i$. Using the notation in Conditions \ref{Lemma: main condition} and \ref{Lemma: group index}, we can now write $G_i$ for $G_f$ and $G_{i,a}$ for $G_{f,a}$ over $F_i$. For clarity, in the rest of the proof we identify $G_f$ over $F^{\bar{\sigma}}$ with a subgroup $H_{i}$ of $G_{i}$ and analogously for $H_{i,a}$. Since $F_i$ is the smallest extension of $k_\ell$ containing $a_i$ and the coefficients of $f_i(x)$, we know $F_i\subset F^{\bar{\sigma}}$. By Proposition \ref{Prop for main thm}, we have that $L_{i,j}\subset F^{\bar{\sigma}}$ for infinitely many $j$ values, say $j\in J\subset \mathbb{N}$. Let $M_i:=F_i(\zeta_{d_i}^\infty)\cap F^{\bar{\sigma}}$, $J_i$ denote $G_{f}$ over $M_i$, and $J_{i,a}$ denote $G_{f,a}$ over $M_i$. We now have
\begin{equation}\label{eq main pf}
F_{i}^{\bar{\sigma}}:=F^{\bar{\sigma}} \cap M_i(f_i^{-\infty}(a))\supset M_i\prod_{j\in J} L_{i, j}.
\end{equation}
Clearly $H_{i,a}=\text{Gal}(F^{\bar{\sigma}}(f_i^{-\infty}(a))/F^{\bar{\sigma}})$ embeds into $J_{i,a}=\text{Gal}(M_i(f_i^{-\infty}(a))/M_i)$ with an image of index precisely equal to $[F_{i}^{\bar{\sigma}}:M_i]$. By Corollary \ref{cor roots of unity}, $J_i\cong H_i$. Hence
\begin{equation}\label{eq contradiction}
[H_i:H_{i,a}]\geq [J_{i,a}: H_{i,a}]=[F_{i}^{\bar{\sigma}}: M_i]=\infty
\end{equation}
where the last equality follows by Equation \eqref{eq main pf} and Proposition \ref{lemma min Gal ext lift over cyclotomic ext}. Remembering that $G_f=H_i$ and $G_{f,a}=H_{i,a}$ over $F^{\bar{\sigma}}$, we see that Equations \eqref{eq BWOC} and \eqref{eq contradiction} contradict each other. This finishes the proof.
\end{proof}
Corollary \ref{cor to main theorm roots of unity included} follows easily from Theorem \ref{main theorem} and Corollary \ref{cor roots of unity}.
\begin{proof}[Proof of Corollary \ref{cor to main theorm roots of unity included}] Let $k=k(\zeta^\infty)$ be a countable Hilbertian field and $f\in k^{sep}[x]$ satisfy the assumptions of Corollary \ref{cor to main theorm roots of unity included}. Notice that $G_f$ over $k_f$ also has infinitely many open maximal normal subgroups by Corollary \ref{cor roots of unity}. Theorem \ref{main theorem} implies the result.
\end{proof}

\section{Future Directions}
Theorem \ref{main theorem} brings up a natural question.
\begin{question}[Which Polynomials Apply?]\label{which polynomials?}
Given a polynomial $f(x)$ over a countable field $k$, does $G_f$ have infinitely many open maximal normal subgroups?
\end{question}
A general answer to Question \ref{which polynomials?} would be very interesting to pursue, not just to test the applicability of our Theorem \ref{main theorem}, but also because giving an explicit description of $G_{f}$ for an arbitrary polynomial $f(x)$ seems out of reach due to the lengthy computations already required in the aforementioned example of the non-PCF degree 3 case (see \cite{BenedettoCubic}). Our other example where $f$ is a quadratic rational function does not seem any easier (see \cite{BenedettoRational}). Hence such a general result would at least give valuable insight into the underlying structure of general PFIMG.\\

Another subtle point is that all our examples of  polynomials or rational functions satisfying the assumptions of Theorem \ref{main theorem} do so by giving a $G_{f}$ with infinite abelianization. This suggests another question.
\begin{question}\label{infinite abelianization necessary?}(Is Infinite Abelianization Necessary?) Given a polynomial $f(x)$ over a countable field $k$ such that $G_{f}$ has infinitely many open maximal normal subgroups (and so satisfies the conditions of Theorem \ref{main theorem}), is it always true that $G_{f}$ is equal to an iterated wreath product with infinite abelianization?
\end{question}
Notice in order to give a negative answer to Question \ref{infinite abelianization necessary?}, it is not sufficient to find any iterated wreath product that has infinitely many open maximal normal subgroups but does not have infinite abelianization. The reason is that only certain iterated wreath products $G$ can actually be represented by some PFIMG $G_{f}$ corresponding to a polynomial $f(x)$ over a field.\\

It would be intriguing to see whether the conclusion of Theorem \ref{main theorem} extends to polynomials whose PFIMG have infinitely many open maximal subgroups but only finitely many open maximal normal subgroups. This would require working with the fixed fields $F^\sigma$ rather than $F^{\bar{\sigma}}$ (since $\bar{\sigma}$ need not be contained in every open maximal subgroup). Perhaps one could get around this by somehow showing that infinitely many of the open maximal subgroups contain $\bar{\sigma}$ with a Borel-Cantelli style argument. Regarding the linearly disjoint condition of Theorem \ref{BC}, one should recall that two non-Galois minimal extensions over a base field need not be linearly disjoint in general, despite their trivial intersection. Also, the quotient groups are no longer well defined, meaning we cannot guarantee that the indices of the minimal extensions are bounded. Hence one would need to work with pairwise independent open maximal subgroups of bounded index.\\

In replacing $F^{\bar{\sigma}}$ with $F^\sigma$, there is no issue with extending the pseudo-algebraically closed claim (see Jarden's form of the PAC Nullstellensatz Theorem in \cite{Jarden}). The main difficulty in proving such a generalization is that the conclusion of Theorem \ref{Bary} is not known to be true. It is worth mentioning Haran's Bottom Theorem (see \cite{Haran}), which says that $F^\sigma$ is not a finite extension of any proper sub-extension of $L$ for almost every $\sigma\in \text{Gal}(L)$. Additionally, a theorem of Z.M. Chatzidakis says that $F^\sigma$ is not a Galois extension of any proper sub-extension of $L$ for almost every $\sigma\in \text{Gal}(L)$ (see 
\cite{Cha}). In particular, $F^\sigma$ is neither a finite nor an abelian extension of $F^{\bar{\sigma}}$, meaning we cannot use either Serre's fact that finite extensions of Hilbertian fields are Hilbertian (see \cite{Serre}) or Theorem \ref{Abelian Extensions} to argue that any of the $F^\sigma$ fields are Hilbertian.\\

Finally, another natural generalization of our Theorem \ref{main theorem} to attempt would be to allow $\text{char}(k)$ to divide our polynomial degrees or even to consider rational functions. In either case, Theorem \ref{Roots of unity} is known to be false, which our proof of our Theorem \ref{main theorem} depends on. Polynomials with degrees divisible by the positive field characteristic might be purely inseparable over certain specializations, meaning the PFIMG over such $a$ values may be merely the identity element. For rational functions, one needs to differentiate two different associated PFIMG called geometric and arithmetic (see \cite{Ophelia}) that coincide in the polynomial case with $G_f$ over fields $k(\zeta(d^\infty))$ and arbitrary fields $k$, respectively. The analogues of Theorem \ref{Roots of unity} and Corollary \ref{cor roots of unity} do not hold for rational functions, making this case significantly more challenging.

\bibliographystyle{plain} 
\nocite{*}
\bibliography{refs}
\end{document}